\newcommand{\DK}{Davies \& Kovac }
\newcommand{\abs}[1]{\left| #1 \right|}
\newcommand{\code}[1]{\texttt{#1}}
\def\bs{\boldsymbol}
\def\Ex{{\rm I\!E}}
\def\Pr{{\rm I\!P}}
\def\be{\begin{equation}}
\def\ee{\end{equation}}
\def\bea{\begin{eqnarray*}}
\def\eea{\end{eqnarray*}}
\def\bean{\begin{eqnarray}}
\def\eean{\end{eqnarray}}
\def\nn{\nonumber}
\def\ra{\rightarrow}
\def\Bl{\Bigl}
\def\Br{\Bigr}
\def\alp{\alpha}
\def\eps{\epsilon}
\def\lam{\lambda}
\def\R{{\bs{R}}}
\def\AA{{\mathcal A}}
\def\BB{{\mathcal B}}
\def\II{{\mathcal I}}
\def\JJ{{\mathcal J}}
\def\lLR{\mathrm{logLR}_n}
\def\nbin{N_{\tt bin}}
\def\pen{\ell}
\def\N{\mathbb{N}}
\def\ind{\mathbbm{1}}
\newtheorem{theorem}{Theorem}
\newtheorem{proposition}{Proposition}
\newtheorem{lemma}{Lemma}
\theoremstyle{definition}
\newtheorem{remark}{Remark}
\begin{document}
\setstretch{1.25}

\title{The Essential Histogram}
\author{Housen Li${}^{1,*}$, Axel Munk${}^{1}$, Hannes Sieling${}^{1}$, Guenther Walther${}^{2}$ \\
\vspace{0.1cm}\\
{\small${}^{1}$Institute for Mathematical Stochastics, University of G\"{o}ttingen}\\
{\small${}^{2}$Department of Statistics, Stanford University}}

\date{June 2018}

\maketitle

\begin{abstract}
The histogram is widely used as a simple, exploratory display of data, but it is usually not clear how to choose the number and size of bins. We construct a confidence set of distribution functions that optimally address the two main tasks of the histogram: estimating probabilities and detecting features such as increases and modes in the distribution. We define the {essential histogram} as the histogram in the confidence set with the fewest bins. Thus the essential histogram is the simplest visualization of the data  that optimally achieves the main tasks of the histogram. The only assumption we make is that the data are independent and identically distributed.  We provide a fast algorithm for the essential histogram, and illustrate our methodology with examples.  An R-package is available on CRAN.
\end{abstract}

\noindent\textbf{Keywords.} Histogram; Mode detection; Multiscale testing; Optimal estimation;  Significant feature.

\section{Introduction}

The histogram, introduced by Karl Pearson in 1895, is one of the most basic but still one of the most widely used tools to visualize data. However, the construction of the histogram is not unique, leaving the user considerable freedom to choose the locations and number of breakpoints, see~\cite{FrPiPu07}. This arbitrariness allows for radically different visual representations of the data, and it appears that no satisfactory rule for the construction is known, as evidenced by the large number of rules proposed in the literature. In the case of equal bin widths, popular examples of rules for the number of bins are those given by \cite{Stu26}, which is still the default rule in {\tt R}, \cite{Sco79}, \cite{FreDia81}, \cite{Tay87}, and \cite{BirRoz06}. Most of them are derived by viewing the histogram as an estimator of a density and choosing the number of bins to minimize an asymptotic risk estimate. This leads to questions about the performance for small samples as well as about smoothness assumptions that are not verifiable. Instead of having all bins equally wide, it is also common to give equal area to all blocks. \cite{DenMal09} point out that the first approach typically leads to oversmoothing in regions of high density and is poor at identifying sharp peaks, whereas the second oversmooths in regions of low density and does not identify small outlying groups of data. They advocate a compromise of these two approaches that is motivated by regarding the histogram as an exploratory tool to identify structure in the data such as gaps and spikes, rather than as a density estimator, and they argue that relying on asymptotic risk minimization may lead to inappropriate recommendations for the number of bins. This is in line with recent findings for the regressogram~\citep{Tuk61}, the regression counterpart for the histogram~\citep{FrMuSi14,LiMuSi16}. Here the bin choice corresponds to finding {locations} of constant segments, which is a different target than conventional risk minimization, e.g. of the $L^p$ norm, $p \geq 1$. 

This paper proposes a rule for constructing a histogram that is motivated by the two main goals of the histogram \citep[see][]{FrPiPu07}: The histogram provides estimates of probabilities via relative areas; and it provides a display of the density that is simple but informative, i.e. it aims to {have few bins}, but still shows the important features of the data, such as modes.

The idea of the paper is to construct a confidence set of distribution functions such that each one in the confidence set satisfies the first goal~in an asymptotically optimal way. To meet the second goal, we select the simplest distribution function in the confidence set, i.e., the one with the fewest bins, as our histogram distribution function. The resulting histogram is the simplest one that shows important features of the data, such as increases or modes. We call this the essential histogram. Our approach is motivated by the fact that simplicity is a key aspect of the histogram: not only is it implicit in its goal to serve as an exploratory tool, but also in its definition as a piecewise constant function, which should capture the major features of data and the underlying distribution well. We show that in a large sample setting, each distribution function in the confidence set estimates probabilities of intervals with a standardized simultaneous estimation error that is at most twice what is achievable and which is typically much smaller than those obtained from histograms with traditional rules. Likewise, we show that the distribution functions are asymptotically optimal for detecting important features, such as increases or modes of the distribution. Therefore, we attain the above two goals of the histogram asymptotically, but one of the main benefits of our construction is that it provides finite sample guaranteed confidence statements about features of the data: large increases of any histogram in the confidence set, and hence of the essential histogram, indicate significant increases in the true density (cf.~Theorem~\ref{thmFeatureInfer}). We illustrate this by an example in Fig.~\ref{fig:empIntro}. Our finite sample guarantee ensures that the true density has an increase on the two dot-dash intervals, and has a decrease on the two dotted ones, with simultaneous confidence at least 90\%. It implies that the true density has two modes and one trough, as the plotted intervals are disjoint~\citep[cf.][]{DueWal08}. These intervals are a selection of a much larger set of intervals of increase and decrease at all scales, which the method offers (see \S\ref{computation} and \S\ref{optimalfeatures}). Thus, we can state with 90\% guaranteed finite sample confidence that these modes or troughs are really there in the underlying population. These confidence statements are quite valuable enhancements to the essential histogram as an exploratory tool. Any other histogram can be accompanied with our method to obtain such statements for it in order to justify or question modes it suggests {(see \S\ref{ss:evaTool}).} {Indeed, the only  parameter of the essential histogram is the significance level $\alpha$. One should set $\alpha = 0.1$ or even smaller if   confidence statements have to be made, while one can set $\alpha = 0.9$ if the goal is to explore the data for potential features with tolerance to false positives. As a trade-off, we recommend  $\alpha = 0.5$ as default.}

\begin{figure}
\centering
\begin{tabular}{ll}
{\small (a)} & {\small (b)} \\
\includegraphics[width=0.45\textwidth]{./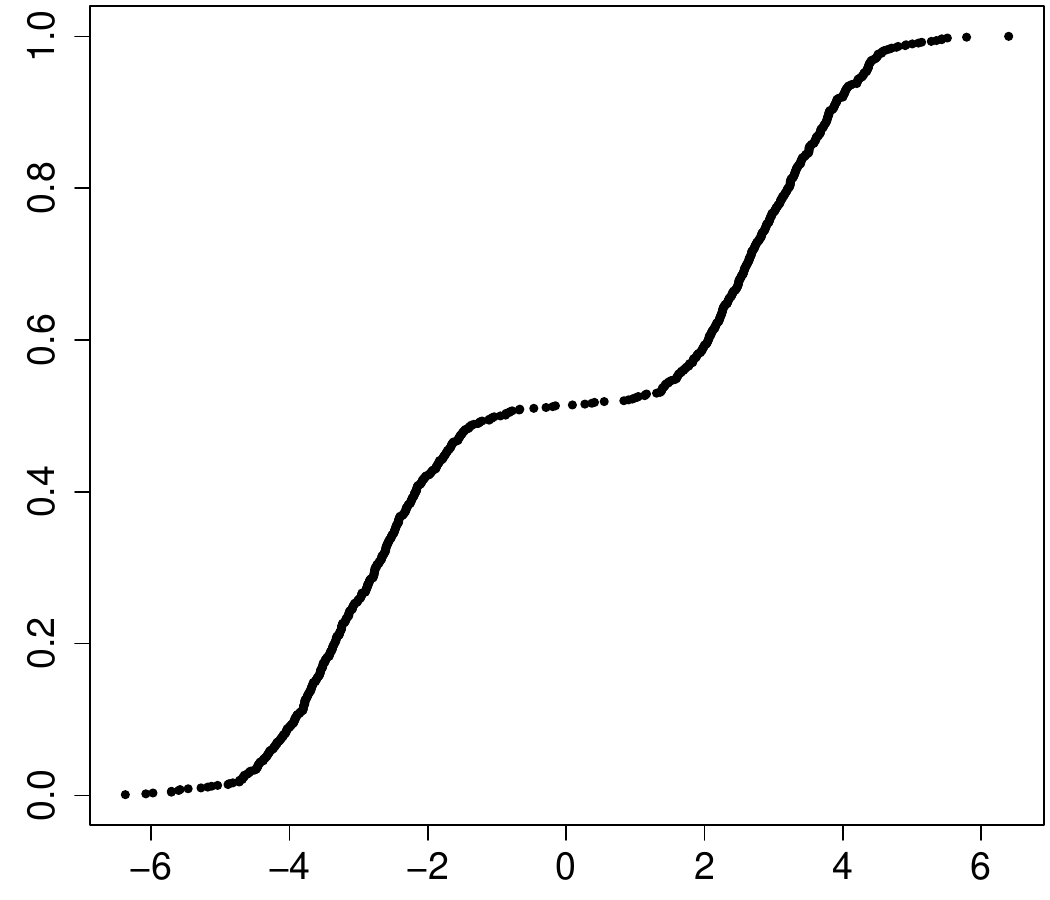} &
\includegraphics[width=0.45\textwidth]{./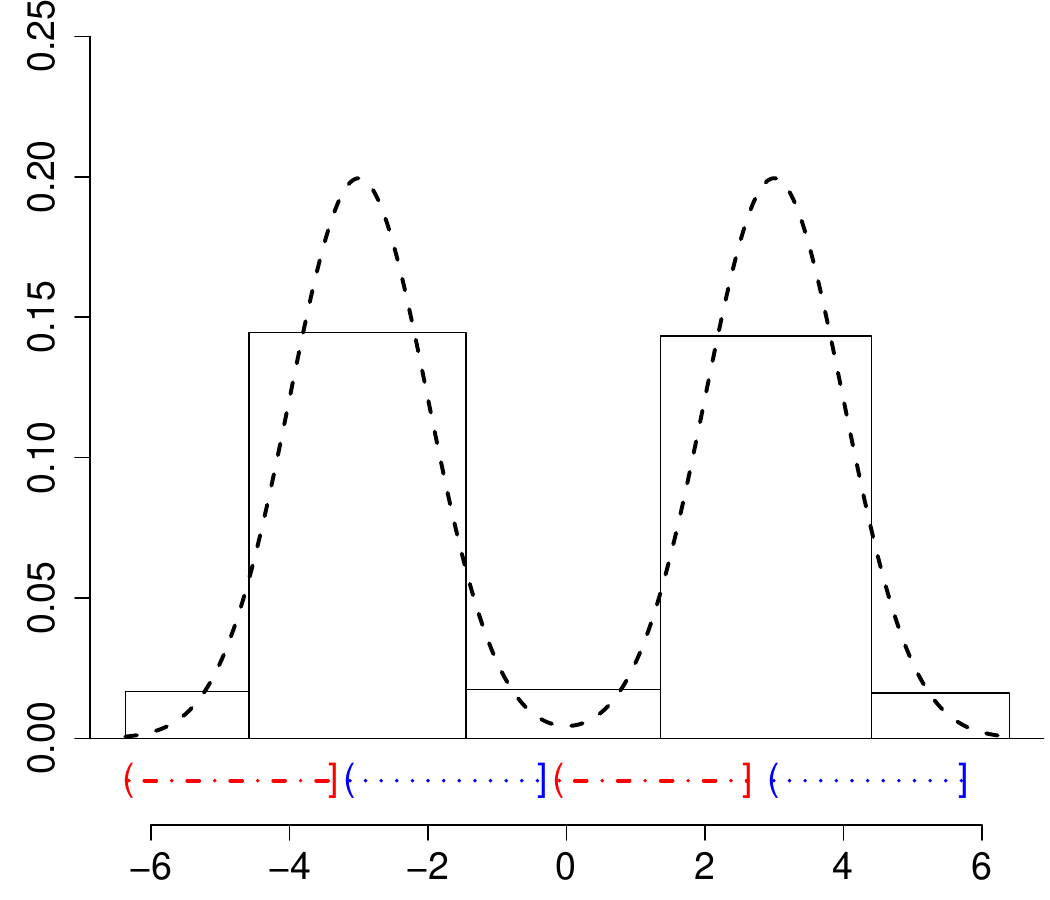} 
\end{tabular}
\caption{Illustration of the essential histogram. (a) the empirical distribution of {900} observations from the Gaussian mixture $0.5\mathcal{N}(-3,1)+0.5\mathcal{N}(3,1)$; (b) the essential histogram with $\alpha = 0.1$ (solid) and the true density (dashed); in the lower part, intervals indicating regions that contain a point of increase (dot-dash) or decrease (dotted). }
\label{fig:empIntro}
\end{figure}

The essential histogram is fairly general, since we make no assumption on $F$. In particular, it also applies to distributions with discrete components, a common feature in real datasets \citep[see e.g.][]{Unw15}. Figure~\ref{fig:discrete} gives two illustration examples: one is the example in \citet[Fig.~4]{DenMal09}, a mixture of three distributions $0.775\,\mathcal{N}(0,1) + 0.15\, \delta_7 + 0.075\,\mathcal{U}(0,10)$; the other is the duration times in the geyser dataset \citep{AzBo90}. As is shown, the essential histogram  estimates the probability of both continuous and discrete components over all scales rather well, and reveals the true shape of the underlying distribution functions. 

\begin{figure}[!t]
\centering
\begin{tabular}{lll}
{\small (a) }& {\small (b)} & {\small (c)} \\
\includegraphics[width=0.3\textwidth]{./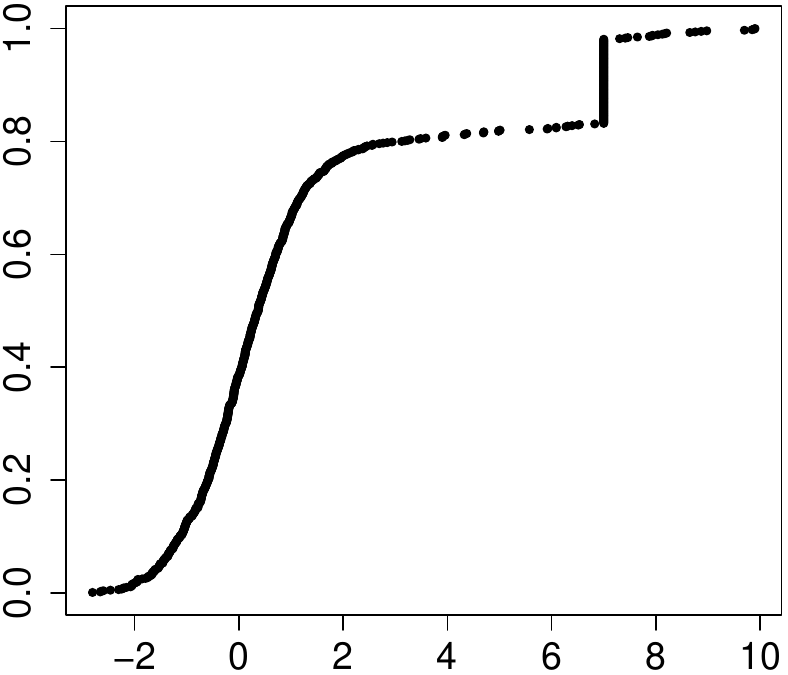} &
\includegraphics[width=0.3\textwidth]{./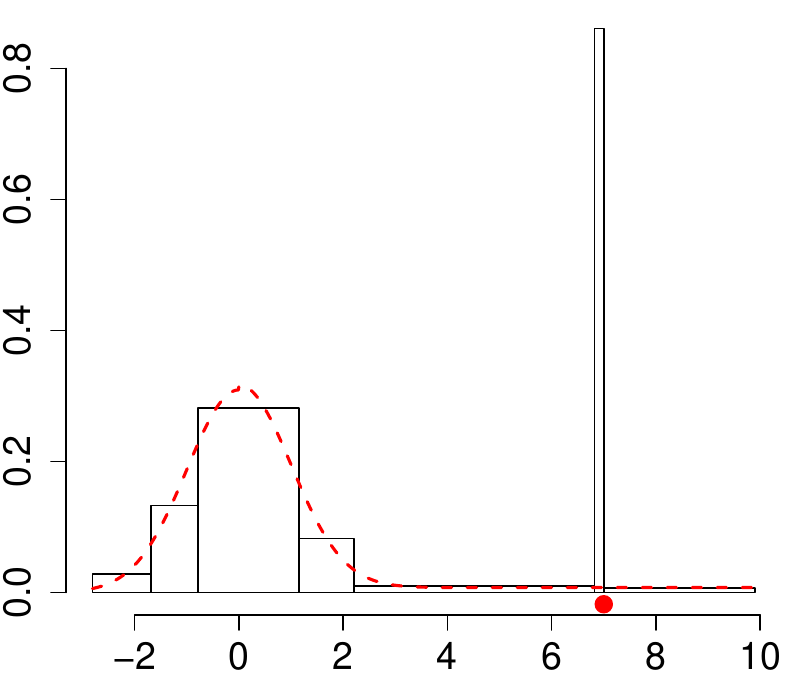} & 
\includegraphics[width=0.3\textwidth]{./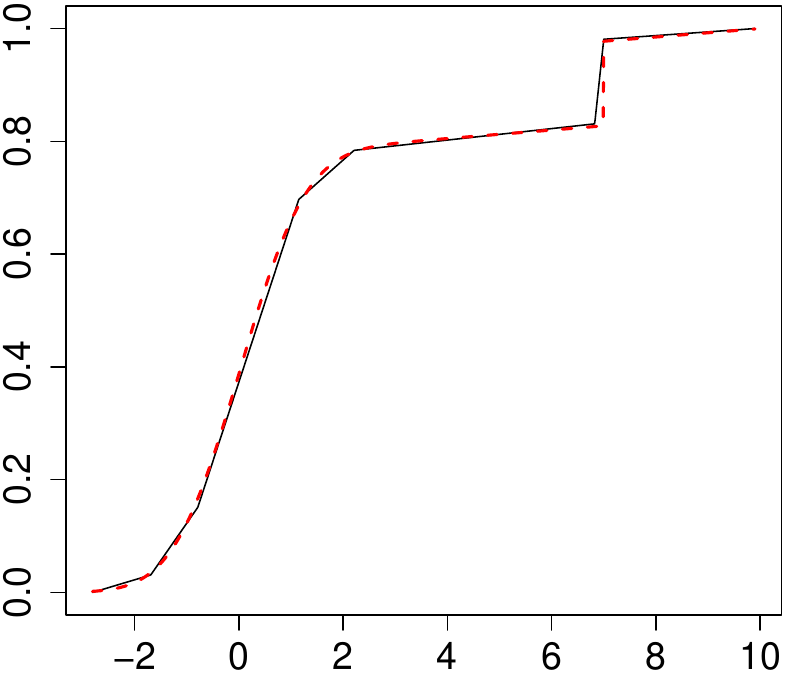}
\end{tabular}
\begin{tabular}{ll}
{\small (d) }& {\small (e)} \\
\includegraphics[width=0.3\textwidth]{./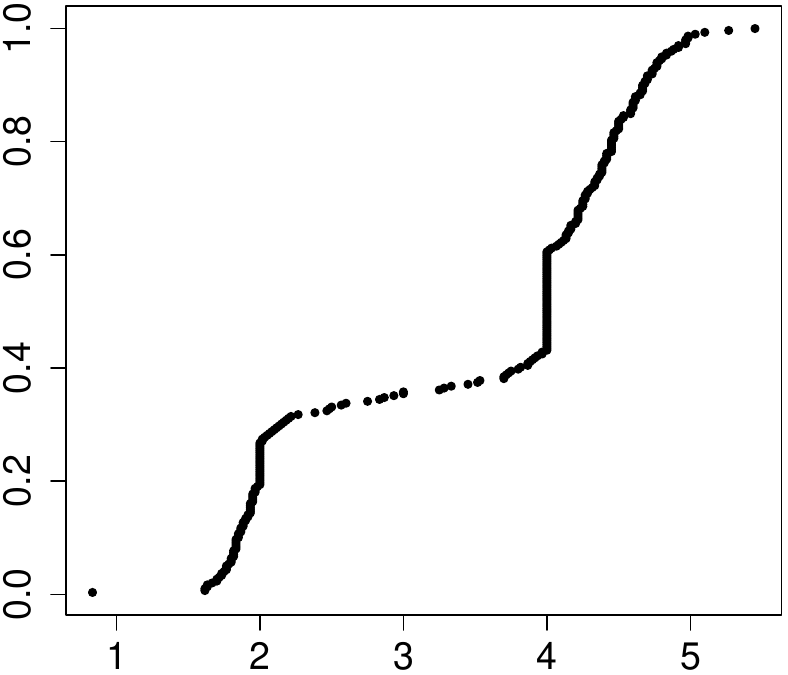} &
\includegraphics[width=0.3\textwidth]{./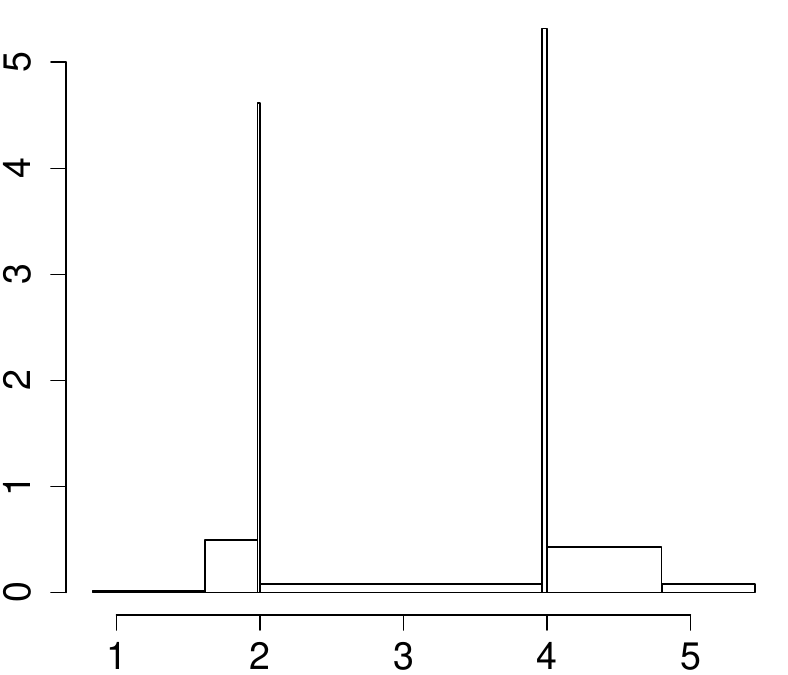} 
\end{tabular}
\caption{Illustration on discontinuous distribution functions. Denby \& Mallows example (sample size $n=1,000$): 
(a) empirical distribution function, (b) the essential histogram (default $\alpha = 0.5$) and the true density (dashed, with a dot indicating the point mass $\delta_7$), and (c) their distribution functions (dashed for the truth).  Geyser data: (d) empirical distribution function of duration times (mins), (e) the essential histogram (default $\alpha = 0.5$).}
\label{fig:discrete}
\end{figure}

The construction of the confidence set is based on the multiscale likelihood ratio test introduced by \cite{RivWal13}, and we show here that this test results in the optimal detection of certain features in the data. \cite{FrMuSi14} use such a multiscale likelihood ratio test for inference on change-points in a regression setting and they {employ} the idea of selecting the function in the confidence set that has the fewest jumps. In the context of the histogram, this approach produces breakpoints only at locations where the evidence in the data requires those in order to show significant features and to provide good probability estimates. Hence the methodology will not put any breakpoints in regions where the density is close to flat. This built-in parsimony is what one would expect from an automatic method for constructing a histogram, see also the comments about open research problems in \cite{DenMal09}. The taut string method of \cite{DavKov04} can be interpreted as producing a histogram {(although not satisfying the first goal of histogram)} that has the smallest number of modes within a confidence ball given by the (periodic) Kolmogorov metric. It is known that the Kolmogorov metric will not result in good probability estimates for intervals unless they have large probability content \citep{DueWel14}. This procedure does not aim at parsimony of bins and will typically produce many more bins than the essential histogram {(although {often} providing visually appealing solutions, and estimating the number of modes well, see \S\ref{examples})}, while the essential histogram automatically results in parsimony of bins and thus also of modes as explained above.

\section{A confidence set for the distribution function} \label{confset}
The empirical distribution function $F_n$ of $n$ independent and identically distributed univariate observations $X_1,\ldots,X_n$ is in a certain sense an optimal estimator of the underlying distribution function $F$,  see \citet{DvoKieWolf56}. While it is straightforward to convert $F_n$ into a histogram distribution function, see \citet[p.86]{ShoWel86}, the resulting histogram with $n$ breakpoints at the observations will generally not be useful for the data visualization of the data as it is much too rough. The premise of this paper is that it is typically possible to remove a large fraction of these breakpoints and still have an estimator that is just as good as $F_n$ for estimating probabilities {$F(I) = \int_I dF$} of arbitrary intervals $I$. This is clearly plausible for local stretches where $F$ has a density that is flat, but it will be seen that also for more general $F$ it is typically possible to reduce the number of breakpoints considerably without incurring a significant error in estimating $F(I)$ or loss of power for detecting important features of $F$. This motivates our proposal for constructing a histogram by choosing the histogram distribution function with the fewest breakpoints that is still optimal for the latter tasks. As the resulting histogram will typically be parsimonious, this construction achieves the goal of providing a simple visualization of the data that optimally addresses the inferential and exploratory tasks of histograms.

The first step in this construction consists of deriving  a confidence set of distribution functions that have the same performance as $F_n$ for estimating probabilities $F(I)$. The idea is to apply certain likelihood ratio tests on a judiciously chosen set of intervals and then to invert this family of tests, i.e., to define a $(1-\alp)$-confidence region for $F$ as those 
distribution functions that pass the totality of these tests:
\begin{multline}\label{eq:Cn}
C_n(\alp)\ =\ \Bl\{ \mbox{distribution function } H: {\Bl(2 \lLR \bigl(H(I),F_n(I)\bigr)\Br)^{1/2}}  \\ \leq
\pen(F_n(I))+ \kappa_n(\alp)\ \ \mbox{for all }
I \in \JJ \Br\}.
\end{multline}
Here 
\begin{equation*}
\lLR \bigl(H(I),F_n(I)\bigr)= n F_n(I) \log\Bl( \frac{F_n(I)}{H(I)}\Br)
+n(1-F_n(I)) \log\Bl( \frac{1 - F_n(I)}{1 - H(I)}\Br)
\end{equation*} 
is the log-likelihood ratio statistic for testing $F(I)=H(I)$, 
\begin{equation}\label{eq:pen}
\pen(F_n(I)) = \Bl({2\log \bigl(\frac{e}{F_n(I) (1-F_n(I))}\bigr)}\Br)^{1/2}
\end{equation} is the scale penalty, and $\kappa_n(\alp)$ is the $(1-\alp)$-quantile of the distribution of
\begin{equation}\label{eq:msStat}
T_n\ =\ \max_{I \in \JJ} \Bl\{\Bl(2 \lLR \bigl(F(I),F_n(I)\bigr)\Br)^{1/2} -
\pen(F_n(I))\Br\}
\end{equation}
with $\JJ$ being a collection of intervals:
\begin{equation}\label{eqJJ}
\begin{aligned}
\JJ &= \bigcup_{l =2}^{l_{max}} \JJ(l),\;\;\; \mbox{ where }
\;l_{max}=\Bl\lfloor \log_2 \frac{n}{\log n}\Br\rfloor \;\;
\mbox{ and}\\
\JJ(l) &= \Bl\{(X_{(j)},X_{(k)}]:\,
j,k \in \{1+i d_{l},
i\in \N_0 \} {\cap\mathcal{D}} \ \mbox{, } m_{l}<k-j\leq 2m_{l}\Br\},\\
&\;\;\;\;\mbox{ where } m_{l}=n2^{-l},\; d_{l}=
\Bl\lceil \frac{m_{l}}{6 {l}^{1/2}}\Br\rceil, {\text{ and }\mathcal{D} = \bigl\{i\, :\, X_{(i)} \neq X_{(i+1)}\bigr\}.} 
\end{aligned}
\end{equation}
This collection of intervals was introduced in \cite{Wal10} to approximate the collection of all intervals on the line in a computationally efficient manner: \cite{RivWal13} show that the above multiscale likelihood ratio statistic can be computed in $O(n \log n)$ steps while the collection is still rich enough to guarantee optimal detection in certain scanning problems. Here we  show in \S\ref{optimalprobs} that every $H \in C_n(\alp)$ has the same asymptotic estimation error as $F_n$ for probabilities $F(I)$. Moreover, we show in \S\ref{optimalfeatures} that every $H \in C_n(\alp)$ is optimal for the detection of  certain features which are relevant for the exploratory purpose of the histogram. In particular, these optimality properties hold for the parsimonious histogram distribution function that we compute in \S\ref{computation} in the second step of our construction of the essential histogram.

\section{Computing the essential histogram}  \label{computation}

\subsection{Computationally feasible relaxation}{
For a given partition of the real line into intervals  $I_0,I_1, \ldots, I_K $, we define the {histogram of $F$} as the density $h(x)= \sum_{i=0}^K F(I_i) \ind_{I_i}(x) /\abs{I_i}$, where $\abs{I_i}$ is the Lebesgue measure of $I_i$. The histogram $h$ can be recovered from its distribution function $H$ as the left-hand derivative of $H$.} In the second step of our construction we will find a histogram in $C_n(\alpha)$ in \eqref{eq:Cn} with the least number of bins. This computation requires the solution of a nonconvex combinatorial optimization problem and is practically infeasible for most real world applications. However, it is possible to compute the exact solution of a slight relaxation (still nonconvex) of the original optimization problem in almost linear run time, see \S\ref{subAlg} and \S\ref{sp:cmp}. This optimization problem~is
\begin{equation}\label{eq:adj_ess_hist}
 \min  \quad \nbin (H)  \qquad \text{ subject to }\quad H \in \tilde{C}_{n}(\alp).
\end{equation}
Here $\tilde{C}_n(\alp)$ is the superset of the histogram distribution functions in $C_n(\alp)$ that results if one evaluates the likelihood ratio tests only on those intervals where the candidate density is constant:
\begin{multline}\label{eq:tldCn}
\tilde{C}_n(\alpha) = \Bl\{\mbox{histogram distribution function } H: 
\Bl(2 \lLR \bigl(H(I),F_n(I)\bigr)\Br)^{1/2} \leq \pen(F_n(I)) \\ +\kappa_n(\alpha)
\mbox{ for each } I\in \JJ \mbox{ where the left-hand derivative } H^{\prime} \mbox{ is constant} \Br\},
\end{multline}
and $\nbin(H)$ is the number of bins of the density of $H$. In general, solutions to~\eqref{eq:adj_ess_hist} are not unique. In that case we will pick 
$\hat{H}$ with density $\hat h = \sum_{k = 0}^K {\abs{I_k}}^{-1}{F_n(I_k)}1_{I_k}$, which
maximizes the following negative entropy {(up to a factor of $n$)}
\begin{equation*}
\sum_{k = 0}^K n F_n(I_k)\log \left(\frac{F_n(I_k)}{\abs{I_k}}\right).
\end{equation*}
This is the log-likelihood if we assume the data are distributed according to $\hat{H}$. Thus, we select the one that explains data best in terms of likelihood among all 
solutions of~\eqref{eq:adj_ess_hist}.  We refer to this solution as the {essential histogram}.

Since $\tilde{C}_n(\alp)$ is a superset of the histogram distribution functions in $C_n(\alp)$, the
minimization problem \eqref{eq:adj_ess_hist} over histogram distribution functions $H \in \tilde{C}_{n}(\alp)$
will result in a solution that may have fewer bins than the minimizer
over $C_n(\alp)$, which is a beneficial side effect. In turn, $\tilde{C}_{n}(\alp)$
involves fewer goodness of fit constraints, which may result in some loss in
efficiency in inference. In the next sections, the theoretical results and the simulations show that this loss is not significant. Moreover, such computational relaxation still allows to derive guaranteed finite sample confidence
statements about certain features of the distribution.

\subsection{Numerical computation}\label{subAlg}

{For brevity, we focus here on the main ideas, and defer the technical details to \S\ref{sp:cmp} in the appendix. The implementation is provided in the R-package {essHist} on CRAN. }

{Computation of the threshold $\kappa_n(\alp)$ in~\eqref{eq:Cn}:} 
In case of continuous $F$, the distribution of $T_n$ is independent of $F$, so $\kappa_n(\alpha)$ can be determined by setting $F$ as e.g.~uniform, which leads to a confidence level exact at $1-\alpha$.  In general case, where $F$ can be discontinuous, the distribution of  $T_n$ may depend on the unknown $F$. However, it is always stochastically bounded from above by a universal distribution, defined via a slight variant of $T_n$ with $F$ being uniform. In particular, this implies that there exists some $\kappa^*_n(\alpha)$, satisfying $\kappa_n(\alpha) \le \kappa_n^*(\alpha) $ and $\sup_n\kappa_n^*(\alpha) < \infty$, see Lemma~\ref{le:ks}. This ensures that, if using $\kappa_n^*(\alpha)$ instead of $\kappa_n(\alpha)$ as the threshold in~\eqref{eq:Cn}, the confidence level is always $\ge1-\alpha$, and all our theoretical results  remain valid. The choice of $\kappa_n^*(\alpha)$, compared to $\kappa_n(\alpha)$, makes the inference slightly conservative, but it is not consequential for the empirical performance of the essential histogram. In practice, we will choose $\kappa_n^*(\alpha)$ as the threshold in~\eqref{eq:Cn} when there are tied observations; otherwise, we treat $F$ as continuous, and estimate the threshold $\kappa_n(\alpha)$ by setting $F$ as uniform. In all experiments in the paper, $\kappa_n(\alpha)$ or $\kappa_n^*(\alpha)$ is estimated by  5,000 Monte-Carlo simulations, which needs to be done only once for a fixed sample size $n$, {and can be approximated for large $n$, see \S\ref{ss:qnt}}

{Computation of the essential histogram:} By $X_{(1)}, \ldots, X_{(n)}$ we denote the order statistics of observations $X_1, \ldots, X_n$. We treat each $X_{(i)}$ as a node in a graph, and set the edge length between nodes $X_{(i)}$ and $X_{(j)}$ as the minimal number of blocks of a step function on $(X_{(i)}, X_{(j)}]$ that satisfies the multiscale constraint~\eqref{eq:tldCn}.  Then the computation of the essential histogram is to find the shortest path between $X_{(1)}$ and $X_{(n)}$, which can be exactly computed by dynamic programming algorithms, see e.g.~\citet{Dij59}, with computation complexity $O(n^3)$. To improve computational speed, we exploit an accelerated dynamic program by incorporating pruning ideas~\citep[see e.g.][]{KillFeaEck12,FrMuSi14,MHRF17,HRFB17}. The constraint that the estimator itself should be a histogram has been incorporated into the dynamic programming algorithm. The resulting accelerated dynamic program is significantly faster than the standard dynamic program, and most of the time has nearly linear computation complexity in terms of sample size, with the worst case computation complexity being quadratic up to a log factor (which happens very rarely). This is confirmed by its empirical time complexity, which is almost linear (cf.~Fig.~\ref{fig:claw_tm}). Moreover, the memory complexity is always $O(n)$.

\section{Optimal estimation of probabilities}  \label{optimalprobs}

For ease of exposition, we assume here and in \S\ref{optimalfeatures} that the underlying distribution function $F$ is continuous. We stress, however, that our methodology is designed for arbitrary and possibly discontinuous $F$. Recall that the distribution of $T_n$ under any $F$ is stochastically bounded from above by a universal distribution. Thus, the theoretical guarantee in Theorems~\ref{thmA}, \ref{thmFeatureInfer} and \ref{thmD2} carry over to any discontinuous $F$ with natural modifications, and so does the upper bound in Theorem~\ref{thmC1}  (first part). In contrast, the lower bounds, i.e., in Theorems~\ref{thmC1} (second part),~\ref{thmD} and~\ref{thmB}, require the assumption of continuity on $F$ to distinguish from e.g., the pure deterministic case.  Some further optimality results and all the proofs are in \S\ref{optConfInt} and \S\ref{proofs} in the appendix.

Now we investigate how well $H \in C_n(\alp)$ performs with regard to the first goal of the histogram, namely estimating probabilities $F(I)$ for intervals $I$. To this end, for probabilities of size $p \in (0,1)$, we introduce the simultaneous standardized estimation error of $H$ as
\begin{equation}\label{eq:dp}
d_p (F,H)\ =\ \sup_{\mbox{intervals }I: F(I)=p} \frac{|H(I)-p|}{\bigl({p(1-p)}\bigr)^{1/2}}.
\end{equation}
Note that $d_p (F,H) = d_{1-p} (F,H)$. Thus, it suffices to consider $p \in (0,1/2]$ for $d_p (F,H)$. 

The first result establishes a benchmark for this task by deriving the performance of the empirical distribution function $F_n$. It shows that $d_p(F, F_n)$ is very close to $\left({2 \log (e/p_n) /n}\right)^{1/2}$.
\begin{theorem}  \label{thmC1}
For $B_n \ra \infty$ arbitrarily slowly {as $n\ra\infty$}, it holds uniformly in $F$ that
$$
\Pr_F \Bl( {n}^{1/2}\ d_p (F,F_n) \leq \bigl({2 \log \frac{e}{p}}\bigr)^{1/2} +B_n\ \mbox{ for all }
p \in \Bl[\frac{\log^2n}{n},\frac{1}{2}\Br] \Br) \ra 1.
$$
Furthermore, if $ p_n \ge n^{-1}{\log^2n}$, and $p_n \ra 0$, then, uniformly in $F$,
$$
\Pr_F \Bl( {n}^{1/2}\ d_{p_n} (F,F_n) \geq \bigl({2 \log \frac{e}{p_n}}\bigr)^{1/2} -B_n \Br) \ra 1.
$$
\end{theorem}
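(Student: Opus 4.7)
The plan is to reduce to the uniform case via the quantile transform: since $F$ is continuous, $U_i := F(X_i)$ are i.i.d.\ uniform on $[0,1]$, and every interval $I$ with $F(I)=p$ corresponds to a subinterval $J\subset[0,1]$ of Lebesgue measure $p$. It therefore suffices to bound $\sup_{|J|=p}|U_n(J)-p|/\sqrt{p(1-p)}$ uniformly in $p\in[\log^2 n/n,1/2]$, where $U_n$ is the empirical distribution of the $U_i$'s, and the statement becomes a uniform-over-scales increment bound for the uniform empirical process. Uniformity in $F$ then comes for free.

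For the upper bound I would apply Bernstein's inequality to $nU_n(J)\sim \mathrm{Binomial}(n,p)$: with $t=\sqrt{2\log(e/p)}+B_n$ a single interval satisfies
$\Pr(\sqrt{n}|U_n(J)-p|/\sqrt{p(1-p)}>t)\leq 2\exp(-t^2/(2(1+\eta(t,p))))$
with correction $\eta(t,p)=t/(3\sqrt{np(1-p)})$. The assumption $p\ge \log^2 n/n$ forces $\eta(t,p)=O(1/\sqrt{\log n})=o(1)$, so the bound is essentially $(p/e)\exp(-B_n\sqrt{2\log(e/p)}(1+o(1)))$. A dyadic peeling over $p\in[2^{-k-1},2^{-k}]$ with $k$ ranging up to $O(\log n)$, combined with a discretisation of the endpoints of $J$ on a grid finer than $p/\sqrt{\log(e/p)}$ (so the discretisation error is absorbed into $B_n$), and a union bound over the $O(n/2^{-k})$ resulting intervals at scale $k$, yields a total failure probability of order $\sum_k \exp(-cB_n\sqrt{\log n})=o(1)$.

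For the lower bound, fix $p_n\ge\log^2 n/n$ with $p_n\to 0$ and partition $[0,1]$ into $m:=\lfloor 1/p_n\rfloor$ disjoint intervals $J_1,\dots,J_m$ of length $p_n$. The vector $(nU_n(J_1),\dots,nU_n(J_m))$ is multinomial, so the coordinates are pairwise negatively correlated; via Poissonisation (or a direct second-moment argument) one reduces to a maximum of $m$ essentially independent standardised binomial increments. A Cramér/Bahadur--Ranga Rao expansion gives the matching lower tail
$\Pr(\sqrt{n}(U_n(J_j)-p_n)/\sqrt{p_n(1-p_n)}\ge t)\ge \exp(-t^2(1+o(1))/2)$
for $t\to\infty$ with $t=O(\sqrt{np_n})$, valid again precisely because $p_n\ge \log^2 n/n$. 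Choosing $t=\sqrt{2\log m}-B_n/2=\sqrt{2\log(e/p_n)}-B_n/2-o(1)$, the expected number of $j\le m$ with $\sqrt{n}(U_n(J_j)-p_n)/\sqrt{p_n(1-p_n)}\ge t$ diverges, and a variance bound (using negative correlation of the multinomial) turns this into a probability tending to $1$, uniformly in $F$.

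The main obstacle I anticipate is extracting the exact leading constant $\sqrt{2\log(e/p)}$ simultaneously for \emph{all} $p$ in the stated range. The Bernstein correction $\eta(t,p)$ becomes $o(1)$ precisely at the threshold $p\asymp \log^2 n/n$, which is exactly the lower endpoint appearing in the theorem; below it, the binomial tail is Poisson-like and the Gaussian normalisation fails. The non-routine part of the argument is the bookkeeping needed to ensure that the union-bound cost over both dyadic scales and a fine discretisation of the continuum of intervals of length $p$ is subsumed by the arbitrarily slowly diverging $B_n$; apart from this the bounds are close to classical increment results for the uniform empirical process (Stute, Mason, Einmahl--Mason).
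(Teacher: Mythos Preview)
Your reduction to the uniform case is the same as the paper's. From there the two arguments diverge.

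The paper does not work with binomial tails directly. Instead it invokes the KMT/Hungarian construction to couple the uniform empirical process to a Brownian bridge with error $O_p(\log n/\sqrt{n})$; the restriction $p\ge \log^2 n/n$ is precisely what makes this error negligible after dividing by $\sqrt{p(1-p)}$. The upper bound then follows by quoting Theorem~2.1 of D\"umbgen--Spokoiny (2001) for the increments of Brownian motion. The lower bound also goes through the Gaussian coupling: on disjoint intervals the Brownian motion increments are exactly i.i.d.\ $N(0,1)$, and a Mill's-ratio computation finishes the job. So the paper offloads all the multiscale chaining to existing Brownian-motion results, at the price of invoking KMT.

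Your route---Bernstein plus peeling for the upper bound, Cram\'er-type moderate deviations plus multinomial negative association for the lower bound---is more self-contained and avoids strong approximation entirely. The lower bound works cleanly as you describe: the moderate-deviation regime $t=o(\sqrt{np_n})$ is guaranteed by $p_n\ge\log^2 n/n$, and negative association of the multinomial gives the second-moment bound. For the upper bound, however, two details in your sketch need repair. First, your grid spacing $p/\sqrt{\log(e/p)}$ is too coarse: the oscillation of the centered empirical process over intervals of length $\delta$ is of order $\sqrt{\delta\log(e/\delta)/n}$, so after standardising by $\sqrt{p(1-p)}$ and multiplying by $\sqrt n$ your discretisation error is $\asymp(\log(e/p))^{1/4}$, which need not be dominated by an arbitrarily slow $B_n$; you need at least $\delta_p=o(p/\log(e/p))$. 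Second, your interval count ``$O(n/2^{-k})$'' is off; the right count at scale $p\approx 2^{-k}$ with a grid of size $\delta_p$ is $O(1/\delta_p)=O(2^k\,\mathrm{poly}(k))$, which is what actually makes the union bound sum. Finally, note that bounding the oscillation at the finest scale uniformly over \emph{all} small intervals is itself a statement of the same type as the theorem (this is where Stute/Mason/Einmahl--Mason enter), so your argument is either a genuine multi-scale chaining or it leans on those classical results; either is fine, but it is not a one-line union bound.
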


In fact, no estimator can improve on the $\bigl({2 \log (e/p_n) /n}\bigr)^{1/2}$ bound, as explained in the proof of Theorem~\ref{thmC1}. Thus, $F_n$ provides an optimal estimator for the collection $(F(I) )_I$. The next theorem shows that the distribution functions $H \in  C_n(\alp)$ nearly match this performance: the first part shows that if $H_n$ is a fixed sequence of distribution functions such that $d_{p_n}(F,H_n)$ is slightly larger than this bound, then with high probability $H_n \not\in C_n(\alp)$. However, since we optimize over $C_n(\alp)$ to find the simplest $H \in C_n(\alp)$, we need to bound the worst-case estimation error over all $H \in C_n(\alp)$. The second part of Theorem~\ref{thmA} shows that this worst-case error is at most twice the optimal bound. One readily checks that Theorem~\ref{thmA} holds also for $\tilde{C}_n(\alp)$ in place of $C_n(\alp)$ if in the definition of $d_p(F,H)$ we only consider intervals $I$ where the density of $H$ is constant.

\begin{theorem}  \label{thmA}
Let $B_n \ra \infty$ and $\eps_n = B_n \bigl(\log ({e}/{p_n}) \bigr)^{-1/2}$.
Then for $p_n \in \bigl(n^{-1}{\log^2 n}, {1}/{2}\bigr)$
$$
\sup_{H: d_{p_n}(F,H) > (1+\eps_n)\bigl({\frac{2}{n} \log \frac{e}{p_n}}\bigr)^{1/2}}
\Pr_F \Bl( H \in C_n(\alp) \Br) \ra 0 \ \ \mbox{ uniformly in $F$};
$$
Moreover, it holds uniformly in $F$ that
$$
\Pr_F\biggl( d_{p_n}(F,H) > (2+\eps_n)\bigl({\frac{2}{n} \log \frac{e}{p_n}}\bigr)^{1/2} \mbox{ for some }
H \in C_n(\alp) ,p_n \in \Bl(\frac{\log^2 n}{n},\frac{1}{2}\Br) \biggr) \ra 0.
$$
\end{theorem}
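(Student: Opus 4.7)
The plan rests on three ingredients: (a) the quadratic expansion
$2\lLR(a,b)=n(a-b)^2/[b(1-b)]\cdot(1+O(|a-b|))$ valid when $a,b$ are close to each other and bounded away from $0,1$;
(b) the deviation benchmark from Theorem~\ref{thmC1} for $F_n$, holding uniformly over all intervals; and
(c) the bracketing property built into the construction of $\JJ$ in \cite{Wal10,RivWal13}: for any interval $I^*$ with $F(I^*)=p_n$ there exist $I^\pm\in\JJ$ with $I^-\subset I^*\subset I^+$ and bracket gap $F_n(I^+\setminus I^-)\le 4d_\ell/n$ at the natural scale $\ell\sim\log_2(1/p_n)$. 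I would work throughout on the high-probability event where $T_n\le\kappa_n(\alp)$ and the conclusion of Theorem~\ref{thmC1} hold. On this event $F$ itself lies in $C_n(\alp)$, so for every $I\in\JJ$ both $\sqrt{2\lLR(H(I),F_n(I))}$ and $\sqrt{2\lLR(F(I),F_n(I))}$ are at most $\pen(F_n(I))+\kappa_n(\alp)=\sqrt{2\log(e/p_n)}(1+o(1))+O(1)$.

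For part (i), pick $I^*$ nearly attaining $d_{p_n}(F,H)$, say with $H(I^*)-F(I^*)>(1+\eps_n)\sqrt{p_n(1-p_n)\cdot 2\log(e/p_n)/n}$. A bracket $I^+\in\JJ$ with $I^*\subset I^+$ transfers the signal up to an additive error $F(I^+\setminus I^*)$, and a single-interval Bernstein bound gives $|F_n(I^+)-F(I^+)|=O_P(\sqrt{p_n/n})$. Provided the bracket error is $o$ of the signal---which is precisely what the condition $p_n\ge\log^2 n/n$ is calibrated for---one obtains $|H(I^+)-F_n(I^+)|\ge(1+\eps_n/2)\sqrt{p_n(1-p_n)\cdot 2\log(e/p_n)/n}$. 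Inverting (a) then yields $\sqrt{2\lLR(H(I^+),F_n(I^+))}\ge(1+\eps_n/3)\sqrt{2\log(e/p_n)}$, strictly larger than $\pen(F_n(I^+))+\kappa_n(\alp)$ because $\eps_n\sqrt{\log(e/p_n)}=B_n\to\infty$ dominates the constant $\kappa_n(\alp)$. Hence $H\notin C_n(\alp)$ with probability tending to one, uniformly in $F$.

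For part (ii), fix $H\in C_n(\alp)$ and arbitrary $I^*$ with $F(I^*)=p_n$, and bracket $I^-\subset I^*\subset I^+$. Monotonicity of cdfs gives
\[
|H(I^*)-F_n(I^*)|\le\max\bigl\{|H(I^+)-F_n(I^+)|,\,|H(I^-)-F_n(I^-)|\bigr\}+F_n(I^+\setminus I^-),
\]
and the defining constraint of $C_n(\alp)$ combined with (a) bounds each of the maxima by $(1+o(1))\sqrt{p_n(1-p_n)\cdot 2\log(e/p_n)/n}$. Adding $|F_n(I^*)-F(I^*)|$---itself bounded by the same quantity via Theorem~\ref{thmC1}---yields the factor-$2$ bound, modulo the bracket residual. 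Uniformity in $p_n$ over the interval $(\log^2n/n,1/2)$ is then obtained by discretizing $p_n$ on a dyadic grid and a union bound, which only contributes $o(1)$ multiplicatively.

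The main obstacle is ensuring that the bracket residual $F_n(I^+\setminus I^-)\asymp d_\ell/n$ stays of smaller order than the signal $\sqrt{p_n\log(e/p_n)/n}$ \emph{uniformly} in $p_n$. The crude choice $d_\ell/n\asymp p_n/\sqrt{\log(e/p_n)}$ produces a residual-to-signal ratio of order $\sqrt{np_n}/\log(e/p_n)$, which the condition $p_n\ge\log^2 n/n$ makes bounded at the lower endpoint but not instantly $o(1)$ throughout. To close this, I would refine the bracket by using a finer scale $\ell'>\ell$ in $\JJ$ to tile $I^+\setminus I^-$ by a union of subintervals whose individual $H$-masses are pinned to their $F_n$-masses through the confidence-set constraint; this reduces the residual by a factor of $\sqrt{\ell}$ and yields the required $o(1)$ relative error. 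The remaining steps---controlling Taylor remainders in (a) when $F_n(I)$ stays comparable to $p_n$, and converting between $F(I)$ and $F_n(I)$ in the standardization---are routine given ingredient (b).
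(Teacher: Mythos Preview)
Your overall architecture differs from the paper's in one decisive respect: the paper does \emph{not} engage with the bracketing of $I^*$ by intervals of the sparse system $\JJ$ at all. It proves the theorem for $\JJ=\{\text{all real intervals}\}$, so that the witness interval $I^*$ can itself serve as a test interval in the definition of $C_n(\alp)$; the passage to the sparse $\JJ$ of \eqref{eqJJ} is then outsourced to the technical arguments of \cite{RivWal13}. With all intervals available, part~(i) reduces to a single-interval Chebychev bound (the event $\AA_n=\{\sqrt{n}|F_n(I)-F(I)|\le\sqrt{b_n F(I)(1-F(I))}\}$ for the one fixed $I=I^*$), and part~(ii) uses the uniform event $\BB_n$ from Theorem~\ref{thmC1} directly, with no dyadic discretization of $p_n$ and no union bound. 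The substantive work in both parts is then the careful transfer from the hypothesis standardized by $\sqrt{p_n(1-p_n)}$ to the constraint standardized by $\sqrt{F_n(I)(1-F_n(I))}$, handled via Lemma~\ref{quadapprox}.

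Your direct bracketing route has a real gap. You compute the residual $F_n(I^+\setminus I^-)\asymp d_\ell/n\asymp p_n/\sqrt{\log(e/p_n)}$ and compare it to the signal $\sqrt{p_n(1-p_n)\cdot 2\log(e/p_n)/n}$; the ratio is of order $\sqrt{np_n}/\log(e/p_n)$. You observe this is merely bounded at $p_n=\log^2n/n$, but the situation is much worse at the other end: for $p_n$ bounded away from~$0$ the ratio is of order $\sqrt{n}$, so the bracket residual swamps the signal. The claim that ``the condition $p_n\ge\log^2n/n$ is calibrated for'' the residual to be $o$ of the signal is therefore incorrect; that lower bound governs the smallest scale present in $\JJ$, not the approximation quality of $\JJ$ at coarse scales. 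Your proposed fix (tiling $I^+\setminus I^-$ at a finer scale and invoking the confidence-set constraint on the tiles) does not close this: one endpoint of $I^*\setminus I^-$ is arbitrary and lies on no $\JJ$-grid, so an exact tiling is impossible without a further bracket at the finest scale; and even granting a tiling into $k$ pieces, summing the per-tile bounds $|H(J_j)-F_n(J_j)|\lesssim\sqrt{F_n(J_j)\log(e/F_n(J_j))/n}$ grows like $\sqrt{k}$ and does not recover an $o(1)$ relative error when $p_n$ is of constant order. In part~(i) there is an additional issue: $H(I^+\setminus I^*)$ is unconstrained for a fixed arbitrary cdf $H$ and can cancel the signal on $I^*$, so transferring the deficit from $I^*$ to $I^+\in\JJ$ already requires a part-(ii)-type control of $H$ on the residual.
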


The loss of a factor $2$ is not consequential when compared to popular histogram rules: Proposition~\ref{thmC2} gives the performance of a histogram that uses $k_n$ equally sized bins. If one chooses $k_n \asymp n^{1/3}$ bins as recommended by the common rules in the literature, then ${n}^{1/2} d_{p_n}(F,H_n)$ blows up at the rate $n^{1/3}$ for some rather typical continuous $F$ and $p_n={(4k_n)^{-1}}$, while the benchmark given by $F_n$, and the worst-case error over $H \in C_n(\alp)$, grow very slowly  at a rate of $({\log n})^{1/2}$. A similar result obtains if one uses bins with equal probability content.

\begin{proposition}  \label{thmC2}
Let $H_n$ denote the distribution function of a histogram that partitions $[0,1]$ into $k_n$
equally sized bins. Then there is a continuous $F$ such that for $p_n={(4k_n)^{-1}}$
and odd $k_n$, 
$$
{n}^{1/2} \ d_{p_n}(F,H_n) \ \geq \ \frac{1}{2} \bigl({n p_n}\bigr)^{1/2}.
$$
\end{proposition}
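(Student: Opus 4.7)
My plan is to construct an explicit continuous $F$ whose histogram aggregation $H_n$ is forced to misrepresent the probability of some interval of $F$-mass exactly $p_n$. The natural choice is $F$ with density $f(x)=2\,\mathbf{1}_{[0,1/2]}(x)$: on every bin lying entirely in $[0,1/2]$ or entirely in $[1/2,1]$ the histogram density already agrees with $f$, so all the approximation error is confined to the one bin containing $1/2$. Here the odd-$k_n$ hypothesis plays its only role, namely to place $1/2$ in the interior of a bin rather than at a bin boundary; on the resulting middle bin $I_m=(1/2-1/(2k_n),\,1/2+1/(2k_n)]$ one computes $F(I_m)=1/k_n$, so $H_n$ has density identically $1$ on $I_m$, while the true density equals $2$ on the left half of $I_m$ and $0$ on the right.

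With $F$ fixed, I would then exhibit a single witness interval $I$ straddling $1/2$. Taking, for example, $I=(1/2-p_n/2,\,1/2+p_n]$ gives $F(I)=2\cdot(p_n/2)=p_n$ (only the left half contributes, the right half contributes $0$), and since $p_n=1/(4k_n)<1/(2k_n)$ the interval $I$ lies inside $I_m$, so $H_n(I)=|I|=3p_n/2$. Hence $|H_n(I)-F(I)|=p_n/2$, and standardizing by $\sqrt{p_n(1-p_n)}$ gives $d_{p_n}(F,H_n)\ge (p_n/2)/\sqrt{p_n(1-p_n)}\ge \sqrt{p_n}/2$; multiplying by $\sqrt n$ then yields the claimed bound $\sqrt n\,d_{p_n}(F,H_n)\ge \sqrt{np_n}/2$.

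There is essentially no obstacle; the whole argument is the single observation that an equal-width histogram is blind to sub-bin asymmetry of $F$, and the odd-$k_n$ condition is the minimal hypothesis that lets one exhibit this blindness with a simple symmetric $f$. (For even $k_n$, the same density would give $H_n=F$ identically, and one would instead place the mass asymmetrically inside each bin, e.g.\ $f=2\,\mathbf{1}$ on the right half of every bin, to reach the same conclusion.) The only things to be careful about are keeping $I$ inside $I_m$ so that $H_n$ is a single constant on $I$, and keeping track of the $p_n/(1-p_n)$ versus $p_n$ factor in the final standardization, both of which are elementary inequalities in $1/(4k_n)$ and $1/(2k_n)$.
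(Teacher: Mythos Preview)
Your argument implicitly assumes that $H_n$ is the histogram of $F$ on the equal-width partition, so that the height on the middle bin $I_m$ equals $F(I_m)/|I_m|=1$. But the proposition---and the surrounding discussion of data-based histogram rules---concerns an arbitrary histogram on that partition; for the empirical histogram the height $h$ on $I_m$ is random and need not equal $1$. With your fixed witness $I=(1/2-p_n/2,\,1/2+p_n]$ one has $|H_n(I)-p_n|=p_n\,|3h/2-1|$, which vanishes at $h=2/3$, so the bound does not follow for general $H_n$.

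The paper closes this by letting the witness interval depend on $h$: taking $F$ with density $\tfrac32\mathbf{1}_{[0,1/2)}+\tfrac12\mathbf{1}_{[1/2,1]}$, it uses an interval in the left half of $I_m$ when $h\le 3/4$ and one in the right half when $h>3/4$, obtaining $|F(I)-H_n(I)|\ge p_n/2$ in either case. Your density $f=2\,\mathbf{1}_{[0,1/2]}$ can be salvaged by the same device---take $I=(1/2-p_n/2,\,1/2]$ when $h\le 1$ (length $p_n/2$, $F$-mass $p_n$, hence $|H_n(I)-p_n|=p_n|1-h/2|\ge p_n/2$) and your original $I$ when $h>1$---but the case split on $h$ is the missing ingredient.
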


If one is willing to make higher order smoothness assumptions on $F$, then it can be shown that the performance of these common histogram rules gets much closer to the benchmark. One key advantage of our proposed histogram is that it essentially attains the benchmark in every case by automatically adapting to the local smoothness. At the same time, some $H$ in $C_n(\alp)$ will typically have many fewer than the $n$ bins produced by $F_n$: If the underlying density is locally close to flat, then the multiscale likelihood ratio test will not exclude a candidate $H$ that has no breakpoints in that local region. Thus the $H \in C_n(\alp)$ with the fewest bins gives a simple visualization of the data while still guaranteeing essentially optimal estimation of~$(F(I) )_I$.

The optimality results for estimating $F(I)$ in Theorems~\ref{thmA} and \ref{thmB} carry over to estimating the average density $\bar{f}(I)=F(I)/|I|$ by simply dividing the inequalities by $|I|$, see \S\ref{optimalfeatures}. We note that the construction of $C_n(\alp)$ via log likelihood ratio statistic $\lLR (H(I),F_n(I))$ rather than, say, the standardized binomial statistic ${n^{1/2}}{|H(I) -F_n(I)|}{\bigl({H(I)(1-H(I))}\bigr)^{-1/2}}$ is crucial for these optimality results, see the discussion in \S\ref{optConfInt}. That section also shows that $C_n(\alp)$ is an optimal confidence region for $F$ when $d_p(F,H)$ is interpreted as a distance between $F$ and~$H$.

\section{Optimal detection of features}    \label{optimalfeatures}

Besides estimating probabilities, another important purpose of a histogram is to show important features of the distribution, such as increases or modes of the density. An important aspect of the essential histogram is that the significance level of the confidence set $\tilde{C}_n(\alp)$ automatically carries over to certain features of the essential histogram, thus making it possible to give finite sample confidence statements about features of {$\bar{f}(I) = F(I)/\abs{I}$, which provides a measure of the average density over $I$ without any smoothness assumptions on $F$.} This is a noteworthy advantage of the essential histogram that is not shared by many other histogram rules. Such confidence statements about features of $\bar{f}$ can be derived from the following simultaneous confidence statement about $\bar{f}$:

\begin{theorem}\label{thmFeatureInfer}
Let $c_n(I) =\pen(F_n(I))  + \kappa_n(\alpha)$ with $\pen(F_n(I))$ in ~\eqref{eq:pen}, 
and $$r_n(I) =\frac{2c_n(I)}{\abs{I}}\left(\left({\frac{F_n(I)(1-F_n(I))}{n}}\right)^{1/2}+
\frac{c_n(I)}{2{n}}\right).$$ 
Then with confidence at least $1-\alp$
\be  \label{signpattern}
\Bigl|\bar{f}(I)-\bar{h}(I)\Bigr| \ \leq \ r_n(I)
\ee
simultaneously for all $I \in \JJ$ and all $H \in \tilde{C}_n(\alpha)$ whose density
$h$ is constant on $I$.
\end{theorem}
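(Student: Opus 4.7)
The plan is to derive \eqref{signpattern} by combining the probabilistic guarantee built into $\kappa_n(\alp)$ with a deterministic inversion of the Bernoulli log-likelihood ratio, closed off by the triangle inequality.

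By definition $\kappa_n(\alp)$ is the $(1-\alp)$-quantile of the distribution-free statistic $T_n$ in \eqref{eq:msStat}, so the event
$$
E\ :=\ \{T_n\le\kappa_n(\alp)\}\ =\ \Bl\{\sqrt{2\lLR(F(I),F_n(I))}\le c_n(I)\ \text{for every }I\in\JJ\Br\}
$$
has probability at least $1-\alp$. On $E$, two bounds hold simultaneously for every $I\in\JJ$ on which the density $h$ of some $H\in\tilde C_n(\alp)$ is constant: $\sqrt{2\lLR(F(I),F_n(I))}\le c_n(I)$ (from the event $E$ itself) and $\sqrt{2\lLR(H(I),F_n(I))}\le c_n(I)$ (from the defining constraint of $\tilde C_n(\alp)$ in \eqref{eq:tldCn}).

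The heart of the proof is a deterministic inversion of these LR bounds into bounds on the deviation from $F_n(I)$: I claim that $\sqrt{2\lLR(p,F_n(I))}\le c$ implies
$$
|p-F_n(I)|\ \le\ c\sqrt{\tfrac{F_n(I)(1-F_n(I))}{n}}+\tfrac{c^2}{2n}.
$$
Writing $\lLR(p,F_n(I))=n\,\mathrm{KL}(F_n(I)\,\|\,p)$, I would derive this from a Bennett-type lower bound on the Bernoulli KL divergence, itself a consequence of the elementary inequality $(1+x)\log(1+x)-x\ge x^2/(2+\tfrac{2}{3}x)$ for $x\ge-1$, applied separately to the two cases $p>F_n(I)$ and $p<F_n(I)$ and solved as a quadratic in $|p-F_n(I)|$. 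The degenerate cases $F_n(I)\in\{0,1\}$, in which the square-root term vanishes, follow directly from the closed form of $\lLR$. I expect this inversion to be the main obstacle, because landing on exactly the coefficient $1/2$ in the quadratic remainder and on the empirical rather than the population variance requires the sharper Bennett refinement rather than a plain Bernstein bound, together with the sub-additivity $\sqrt{a+b}\le\sqrt a+\sqrt b$.

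Given the inversion, the remainder is routine. Applying it once with $p=F(I)$ and once with $p=H(I)$, the triangle inequality gives on $E$
$$
|F(I)-H(I)|\ \le\ 2c_n(I)\sqrt{\tfrac{F_n(I)(1-F_n(I))}{n}}+\tfrac{c_n(I)^2}{n}\ =\ r_n(I)\,|I|
$$
simultaneously for every $I\in\JJ$ and every admissible pair $(I,H)$. Dividing by $|I|$ yields \eqref{signpattern} with probability at least $1-\alp$.
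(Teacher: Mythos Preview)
Your proposal is correct and follows essentially the same route as the paper. The paper's proof invokes Lemma~\ref{quadapprox}(c) (which in turn cites (K.5) in \cite{DueWel14}) to obtain, for any $p$ with $\sqrt{2\lLR(p,F_n(I))}\le c_n(I)$, the bound $|p-F_n(I)|\le c_n(I)\sqrt{F_n(I)(1-F_n(I))/n}+c_n(I)^2/(2n)=\tfrac{1}{2}r_n(I)|I|$, and then applies this once to $p=F(I)$ (using $F\in C_n(\alp)$ with probability $\ge 1-\alp$) and once to $p=H(I)$ (using $H\in\tilde C_n(\alp)$), combining via the triangle inequality exactly as you do; your Bennett-type derivation is precisely the content of the cited inequality.
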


This simultaneous confidence statement can be used, for example, to establish finite sample lower confidence bounds on the number of modes and troughs of $\bar{f}$: It follows from (\ref{signpattern}) that with confidence at least $1-\alp$,  $\bar{f}(I)-\bar{f}(J)$ must have the same sign as $\bar{h}(I)-\bar{h}(J)$ whenever $\bigl|\bar{h}(I)-\bar{h}(J)\bigr|\geq r_n(I) + r_n(J)$. Therefore, if one can find intervals $I_1 < J_1 \le I_2 < J_2 \le \cdots \le I_m < J_m$ (where the inequalities are understood elementwise) such that $(-1)^{k+1} (\bar{h}(I_k)-\bar{h}(J_k)) > r_n(I_k) + r_n(J_k)$ for $k=1,\ldots,m$, then one can conclude with confidence at least $1-\alp$ that $(-1)^{k+1} (\bar{f}(I_k)-\bar{f}(J_k)) >0$, hence $\bar{f}$ has at least $\lfloor {m}/{2} \rfloor +1$ modes and  $\lfloor {m}/{2} \rfloor$ troughs. If $F$ has a density $f$, then $\bar{f}(I) -\bar{f}(J)>0$ implies $f(x)>f(y)$ for some $x \in I, y \in J$, so this confidence bound then applies to the density $f$ as well. See Fig.~\ref{fig:empIntro} for an illustration.

We now show that the essential histogram is even optimal in reproducing such increases and decreases, in the sense that it will show an increase if the size of the increase in the underlying distribution is just above the threshold below which detection is asymptotically not possible. Since we are considering general distribution functions $F$ and we do not want to make any smoothness assumptions, we will quantify the size of an increase via $\bar{f}$. We consider a set  $\II_n(c)$ of distribution functions which have an increase in
$\bar{f}$ whose size is parametrized by $c>0$:
\begin{align}\label{Ist}
\II_n(c)=&\bigg\{ F:\mbox{ there exist} \mbox{ disjoint intervals $I_1<I_2$ s.t. }
  \frac{\log^2n}{n}<F(I_i) \leq p_n \\
&\mbox{ for } i  = 1,2, \nn  \mbox{ and }
  \bar{f}(I_2)-\bar{f}(I_1) > c\ \sum_{i=1}^2{\frac{\bigl(2F(I_i)(1-F(I_i))\log \frac{e}{F(I_i)}\bigr)^{1/2}}{n^{1/2}|I_i|}}
\bigg\},  
\end{align}
where $p_n \in (2n^{-1}{\log^2n}, {1}/{2})$ is any given sequence, which for simplicity we omit from the notation $\II_n(c) =\II_n(c,p_n)$. Theorem~\ref{thmD} shows that it is not possible to reliably detect an increase in $\bar{f}$ if $F \in \II_n(1-\eps_n)$ with $\eps_n \downarrow 0$ slowly enough, as no test to this effect can have nontrivial asymptotic power.  In contrast, the first part of Theorem~\ref{thmD2} establishes that with asymptotic probability one the essential histogram will show the increase if $F \in \II_n(1+\eps_n)$. This result clearly also applies to the simultaneous reproduction of a finite number of increases/decreases and hence to the reproduction of modes. Thus the essential histogram has the desirable property that it will show increases and modes of $\bar{f}$ once the evidence in the data is strong enough to make their detection possible in principle. Conversely, one needs to keep in mind that the presence of a feature such as an increase in the essential histogram does not automatically imply that the feature is present in $\bar{f}$: Such an inferential confidence statement requires that the essential histogram shows an increase that exceeds a certain size, as detailed in Theorem~\ref{thmFeatureInfer} and the subsequent exposition. The second part of Theorem~\ref{thmD2} shows that this condition is met if $F \in \II_n(3+\eps_n)$, losing only a factor of 3 on the optimal bound.  This mirrors the result on the estimation of probabilities in Theorem~\ref{thmA}, where a similar loss was found not to be consequential. Therefore, not only has the essential histogram the advantage that it can provide confidence statements about certain features of $\bar{f}$, but when used as such an inferential tool, the essential histogram is even rate optimal.

\begin{theorem} \label{thmD}
Let $X_1, \ldots, X_n  $ be independent samples from $ F$, and $\bs{X} = (X_1, \ldots, X_n)$. 
Assume $\phi_n(\bs{X})$ is any test with level
$\alp \in (0,1)$ under $H_0: \bar{f}$ is non-increasing, in the sense that
$\bar{f}(I_1) \ge \bar{f}(I_2)$ for all disjoint intervals $I_1 < I_2$.
If $\eps_n \in (0,1)$ with $\eps_n ({\log e/p_n})^{1/2} \rightarrow \infty$, then
$$
\inf_{F \in \II_n(1-\eps_n)}\Ex_F \phi_n (\bs{X})\ =\ \alp +o(1).
$$
\end{theorem}

\begin{theorem}\label{thmD2} 
If $\eps_n >0$ with $\eps_n ({\log e/p_n})^{1/2} \rightarrow \infty$, then
\begin{multline*}
\inf_{F \in \II_n(1+\eps_n)} \Pr_F \Bl(\mbox{every $H\in \tilde{C}_n(\alp)$ whose density
$H'$ is constant on $I_1$ and $I_2$}\\ \mbox{has
a point of increase of $H'$ in the convex hull of $(I_1 \cup I_2)$} \Br) \ \ra \ 1.
\end{multline*}
If $\eps_n >0$ with $\eps_n ({\log e/p_n})^{1/2} \rightarrow \infty$, then
\begin{multline*}
\inf_{F \in \II_n(3+\eps_n)} \Pr_F \Bl(\mbox{for every $H\in \tilde{C}_n(\alp)$ whose density 
$H'$ is constant on $I_1$ and $I_2$}\\ \mbox{the confidence statement (\ref{signpattern})
allows to conclude $\bar{f}(I_2) > \bar{f}(I_1)$ } \Br) \ \ra \ 1.
\end{multline*}
\end{theorem}

Furthermore, in the case where the underlying distribution itself
 is a histogram (i.e. has a piecewise constant density), 
we have an explicit control on the number of modes: 
\begin{theorem}\label{thmHistDensity}
Assume that  distribution function $F$ has a piecewise constant density $f = \sum_{k = 0}^K c_k 1_{(\tau_k,\tau_{k+1}]}$, with $-\infty < \tau_0 < \cdots < \tau_{K+1} < \infty$. Then for the essential histogram $h$ (with distribution function $H$) in~\eqref{eq:adj_ess_hist} it controls overestimating the number of bins
\[
 \sup_F \,\Pr_F \Bl(\nbin (H) > \nbin(F)\Br) \le \alpha.
\]
Furthermore, we define
\begin{equation}\label{eq:difficulty}
\gamma  \equiv \gamma(f) = \lambda_f \min\{\underline\theta_f, \Delta_f\},
\end{equation}
with $\underline\theta_f = \min_k c_k$, $\lambda_f = \min_k(\tau_k -\tau_{k-1})$, and $\Delta_f = \min_k\abs{c_k - c_{k-1}}$, and assume that significance level $\alpha_n \gtrsim n^{-\nu}$ for some $\nu >0$, and $\gamma_n \equiv \gamma(f_n) \ge c ({\log n/n})^{1/2}$ for some small enough $c = c(\nu)$ and a sequence of piecewise constant densities $\{f_n\}_{n \ge 1}$ with distribution functions $\{F_n\}_{n\ge 1}$. Then, for some generic $C$, it controls underestimating the number of bins, 
\[
\Pr_{F_n} \Bl(\nbin(H_n) < \nbin(F_n)\Br) \le CK_n \exp\left(-Cn\gamma_n^2\right) \qquad\text{ for } n \ge n_0;
\]
and it controls the number of modes and troughs, for $n \ge n_0$,
\begin{multline*}
\Pr_{F_n} \Bl(h_n \text{ and }f_n \text{ have the same number of modes and troughs}\Br) 
\ge 1 - \alpha_n -  C K_n \exp\left(-C n \gamma_n^2\right).
\end{multline*}
\end{theorem}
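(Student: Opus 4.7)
The plan is to prove the three assertions in order, by analyzing feasibility and infeasibility of candidate histograms in the relaxed constraint set $\tilde C_n(\alpha)$. Part (i) reduces to showing that the true cdf $F$ itself lies in $\tilde C_n(\alpha)$ with probability at least $1-\alpha$: since $F$ is a histogram cdf, the constraints defining $\tilde C_n(\alpha)$ form a subset of those in the event $\{T_n\le\kappa_n(\alpha)\}$ with $T_n$ from~\eqref{eq:msStat}, so $\{T_n\le\kappa_n(\alpha)\}\subseteq\{F\in\tilde C_n(\alpha)\}$, and the left-hand event has probability at least $1-\alpha$ by definition of $\kappa_n(\alpha)$. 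On this event $F$ is a feasible point of the minimization~\eqref{eq:adj_ess_hist}, forcing $\nbin(H)\le\nbin(F)$.

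For Part (ii) I would argue by contraposition. If $\nbin(H_n)<\nbin(F_n)=K_n+1$, pigeonhole forces some bin $J$ of $H_n$ to strictly contain a true breakpoint $\tau_k$ of $f_n$. Inside $J$ the density $\bar h$ of $H_n$ is constant, while the true density jumps from $c_{k-1}$ to $c_k$ with $|c_k-c_{k-1}|\ge\Delta_{f_n}$; hence at least one of $|\bar h-c_{k-1}|,|\bar h-c_k|$ is $\ge\Delta_{f_n}/2$, corresponding to a side of $\tau_k$ inside $J$ on which the true density equals some value $c$. I would then select an $I\in\JJ$ lying entirely on that side with $|I|$ of the largest order allowed by $\JJ$ but bounded by $\lambda_{f_n}$, so that $F(I)=c|I|\ge\underline\theta_{f_n}\lambda_{f_n}\ge\gamma_n$. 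A multiplicative Bernstein bound yields $F_n(I)=F(I)(1+o(1))$ off an event of probability $\exp(-Cn\gamma_n^2)$, so $\lLR(H_n(I),F_n(I))\gtrsim n|I|\Delta_{f_n}^2/c_k\gtrsim n\gamma_n^2/\underline\theta_{f_n}$, whereas $(\pen(F_n(I))+\kappa_n(\alpha_n))^2$ is of order $\log n$ (using $\alpha_n\gtrsim n^{-\nu}$). With the constant in the hypothesis $\gamma_n\ge c\sqrt{\log n/n}$ chosen appropriately the multiscale constraint is violated; a union bound over the $K_n$ candidate breakpoints and over the scales of $\JJ$ produces the $CK_n\exp(-Cn\gamma_n^2)$ factor.

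For Part (iii), on the event from parts (i) and (ii) we have $\nbin(H_n)=\nbin(F_n)$. A refinement of the argument of (ii) shows that every breakpoint of $H_n$ lies within $o(\lambda_{f_n})$ of some $\tau_k$, so each bin of $H_n$ is essentially aligned with one of $F_n$. Applying Theorem~\ref{thmFeatureInfer} to an interval $I\in\JJ$ of length of order $\lambda_{f_n}$ inside each aligned bin yields $|\bar h_k-c_k|\le r_n(I)=O(\sqrt{\log n/(n\lambda_{f_n})})$, which is $\ll\Delta_{f_n}/2$ under the hypothesis because $\gamma_n^2\ge c^2\log n/n$ combined with $\gamma_n\le\Delta_{f_n}\lambda_{f_n}$ gives $\Delta_{f_n}^2\lambda_{f_n}\gtrsim\log n/n$ with the right implicit constant. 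Consequently $\operatorname{sign}(\bar h_{k+1}-\bar h_k)=\operatorname{sign}(c_{k+1}-c_k)$ for every $k$, so $h_n$ reproduces the full sign pattern of $f_n$ and in particular has the same number of modes and troughs. The stated probability bound is then the union of the bad events from (i) and (ii), namely $\alpha_n+CK_n\exp(-Cn\gamma_n^2)$.

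The principal obstacle is the technical bookkeeping in Part (ii). Because $\JJ$ is built from order statistics, I first need a uniform guarantee that it contains intervals of the prescribed physical scale lying inside each fixed bin $(\tau_{k-1},\tau_k)$ of $f_n$. Then I must compare the LR statistic to $\pen(F_n(I))+\kappa_n(\alpha_n)$ precisely in the small-$F_n(I)$ regime, where both sides are of the same logarithmic order; this is exactly what pins down the implicit constant $c(\nu)$ in the signal-strength hypothesis, and controlling it uniformly across all scales $\ell$ and all true breakpoints simultaneously is the technical core.
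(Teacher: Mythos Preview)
Your Part~(i) is correct and identical to the paper's argument.

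For Part~(ii) your strategy matches the paper's, but the pigeonhole step as you state it has a gap. Knowing only that some bin $J$ of $H_n$ contains a breakpoint $\tau_k$ in its interior gives no lower bound on the length of either side $J\cap(\tau_{k-1},\tau_k]$ or $J\cap(\tau_k,\tau_{k+1}]$: the boundary of $J$ could sit arbitrarily close to $\tau_k$ on precisely the side where $|\bar h-c_{\cdot}|\ge\Delta_{f_n}/2$, so your claim that one can take $|I|$ of order $\lambda_{f_n}$ (and hence $F(I)\ge\underline\theta_{f_n}\lambda_{f_n}$) is unjustified. The paper circumvents this with a different pigeonhole: letting $m_k$ denote the midpoint of the $k$-th true bin, the $K$ disjoint intervals $(m_{k-1},m_k]$ cannot each contain a jump of $h$ if $\nbin(H_n)\le K$, so $h$ must be constant on some entire $(m_{k-1},m_k]$. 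Both halves $I_{k-1}^+=(m_{k-1},\tau_k]$ and $I_k^-=(\tau_k,m_k]$ are now \emph{deterministic} intervals of length $\ge\lambda_{f_n}/2$, and on whichever half carries the $\Delta_{f_n}/2$ deviation there is guaranteed room for an approximating $J\in\JJ$. With this fix the rest of your argument (Hoeffding for $F_n(I)\approx F(I)$, comparison of $\lLR$ to the threshold using $\kappa_n(\alpha_n)=O(\sqrt{\log n})$ from $\alpha_n\gtrsim n^{-\nu}$, union bound over $k$) is essentially what the paper does.

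For Part~(iii) your route---first localize each breakpoint of $H_n$ to within $o(\lambda_{f_n})$ of some $\tau_k$, then pin down the bin heights via Theorem~\ref{thmFeatureInfer}---differs from the paper's and is more ambitious than needed. The paper instead observes that on the high-probability event from (i) and (ii), $h$ has \emph{exactly one} jump in each $(m_{k-1},m_k]$, and then bounds directly the probability that this single jump has the wrong sign: it halves $I_{k-1}^+$ and $I_k^-$ once more (four quarter-intervals of length $\ge\lambda_{f_n}/4$) and checks that a wrong-sign jump forces $|\bar h-\bar f|\ge\Delta_{f_n}/2$ on at least one quarter on which $h$ is constant, after which the machinery of~(ii) applies verbatim. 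Your approach could be made to work, but the localization step you defer to ``a refinement of the argument of~(ii)'' is itself a nontrivial claim and does not drop out of~(ii) as stated; the paper's quarter-interval device avoids it entirely.
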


In Theorem~\ref{thmHistDensity}, the constants $c$, $C$ and $n_0$ are known explicitly, see Proposition~\ref{propHistDensity}. Thus, a sufficient condition for the {consistent} estimation of the number of bins, modes and troughs is  
\[
\gamma_n \gtrsim \frac{b_n + ({\log K_n})^{1/2} + ({\log n})^{1/2}}{{n}^{1/2}},
\]
for some $b_n \to \infty$, which can be arbitrarily slow. Further, we stress that $\gamma$ in \eqref{eq:difficulty} quantifies the underlying difficulty in estimating the numbers of modes and troughs, and the number of bins.

\section{Simulation study}    \label{examples}

\subsection{{Comparison study}}\label{ss:compare}

Now we consider various simulation scenarios that reflect a range of difficulties in density estimation and data exploration. For comparison, we include the classical histograms with equal widths of bins {\citep{Pea895}} and with equal areas of blocks {\citep{Sco92}}, and also a {more recent} multiscale density estimator by \cite{DavKov04}. The number of bins for the classical histograms is selected by the~\cite{Stu26}'s rule (default in R), and the asymptotically optimal rule in~\citet{Sco92}. Both are computed by the built-in function {\tt hist} in R.  The \DK estimator has a similar flavor as the essential histogram, defined as a solution to a variational problem under a certain multiscale constraint, but it computes only an approximate solution using taut strings together with some heuristical adjustments (e.g.~local squeezing), and hence statistical error guarantee or confidence statements appear to be difficult. It is computed by the function \code{pmden} with default parameters in the R-package {ftnonpar} on CRAN. We only report visual results here, and provide the detailed comparison in terms of mean integrated squared error, skewness, and the number of modes, etc., to \S\ref{sp:sim} in the appendix. 

{Uniform density:} Observations are from the uniform distribution $\mathcal{U}(0,1)$. The comparison result is given in Fig.~\ref{fig:uniform} and Table~\ref{tab:uniform}. The essential histogram with small significance levels ($\alp \le 0.5$) performs best as it recovers the true density almost perfectly, while with large $\alp$ (e.g.~$\alp = 0.9$), similar to the \DK estimator, it tends to include false bins. By sharp contrast, the classical histograms overall perform worst, and report many false bins $\bigl(\nbin(\hat F)-\nbin(F)\bigr)$ and thus false modes, which become even worse as the sample size increases (cf.~Table~\ref{tab:uniform}).

\begin{figure}[!t]
\centering
\begin{tabular}{llll}
{\small (a) }& {\small (b)} &{\small (c) }& {\small (d)} \\
\includegraphics[width=0.23\textwidth]{./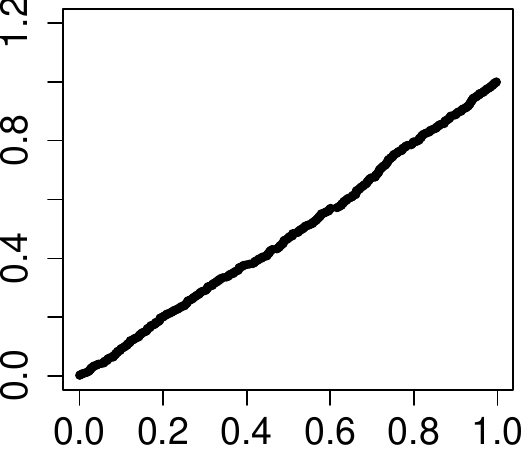} &
\includegraphics[width=0.23\textwidth]{./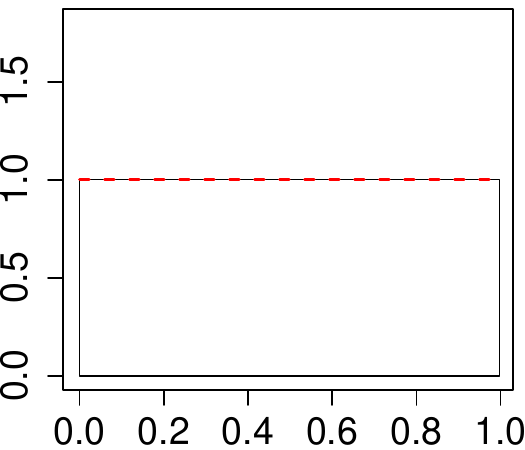} &
\includegraphics[width=0.23\textwidth]{./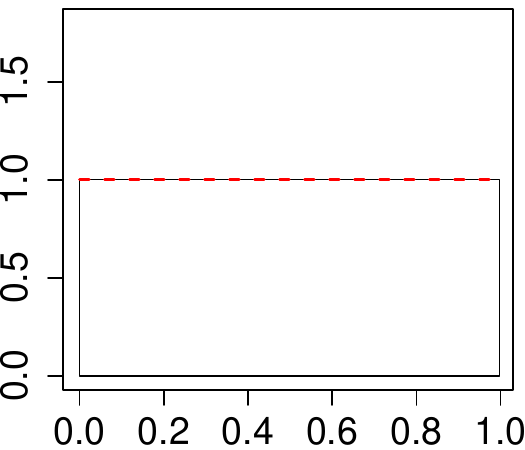} &
\includegraphics[width=0.23\textwidth]{./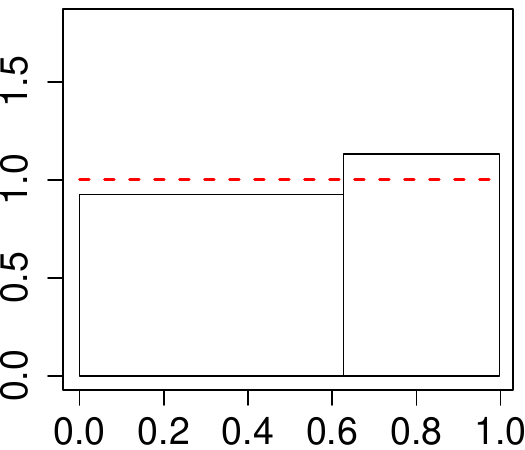}\\
{\small (e)} &{\small  (f)} &{\small (g) }&{\small  (h) }\\
\includegraphics[width=0.23\textwidth]{./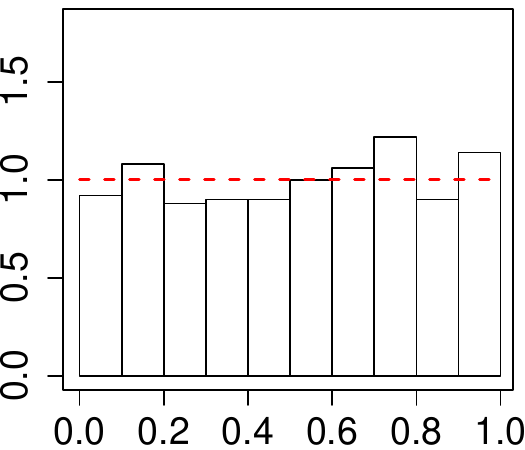} &
\includegraphics[width=0.23\textwidth]{./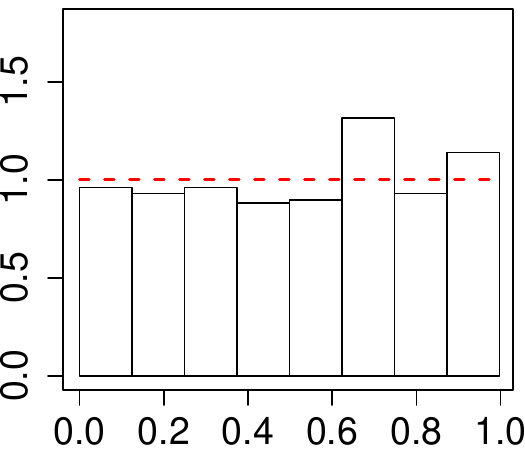} &
\includegraphics[width=0.23\textwidth]{./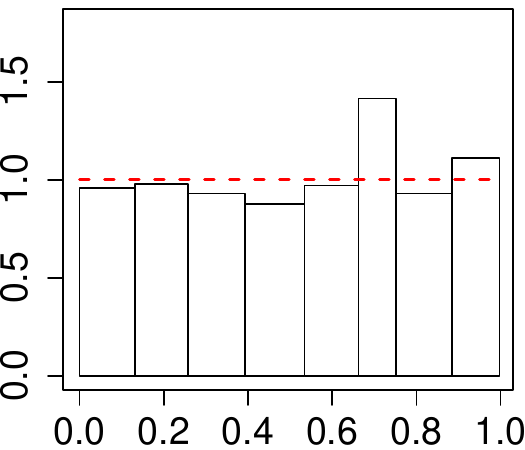} &
\includegraphics[width=0.23\textwidth]{./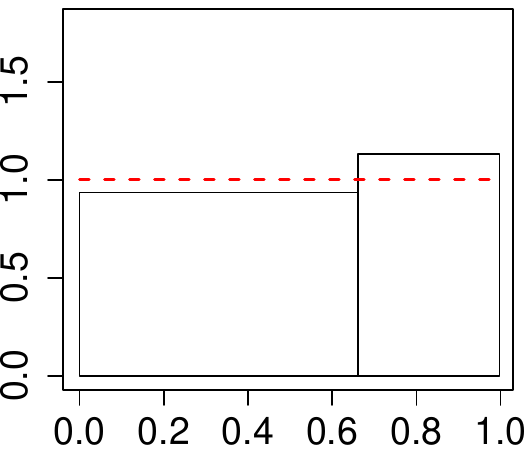} 
\end{tabular}
\caption{Uniform density: (a) the empirical distribution function; (b)-(d) the essential histograms with $\alpha = 0.1, 0.5$ and $0.9$;  (e)-(f) the equal bin width histograms with the Sturges' and Scott's rules; (g) the equal block area histogram with the Scott's rule; (h) the \DK estimator. In each panel, the true density (dashed) and sample size $n =500$.}
\label{fig:uniform}
\end{figure}

{Monotone density:} Figure \ref{fig:exponential} and Table~\ref{tab:monotone} are about the exponential distribution with unit mean. The essential histogram is better than other methods from both density estimation and feature detection perspectives, while requiring the fewest bins, which eases the interpretation of the data. The \DK estimator performs comparably well, but sometimes distorts the true shape (e.g.~the artificial spike in Fig.~\ref{fig:exponential}h). Similar to the previous example, the classical histograms are less competitive, and tend to  include more false modes as the sample size increases. In addition, the comparison results on other monotone densities (not shown) are similar to this example.

\begin{figure}[!t]
\centering
\begin{tabular}{llll}
{\small (a) }& {\small (b)} &{\small (c) }& {\small (d)} \\
\includegraphics[width=0.23\textwidth]{./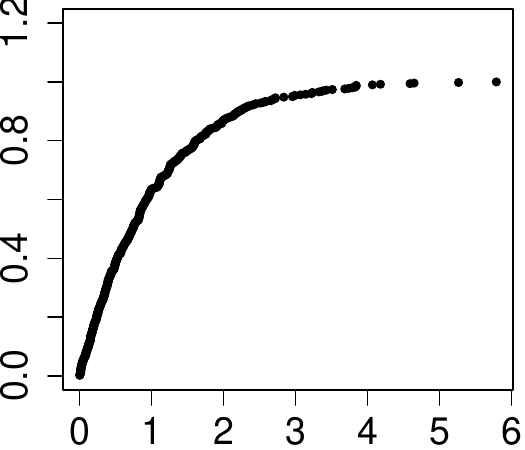} &
\includegraphics[width=0.23\textwidth]{./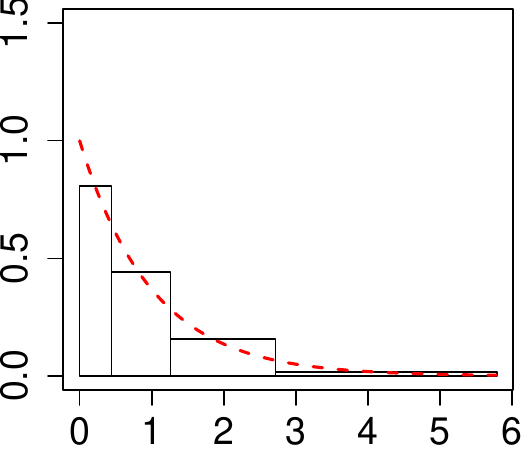} &
\includegraphics[width=0.23\textwidth]{./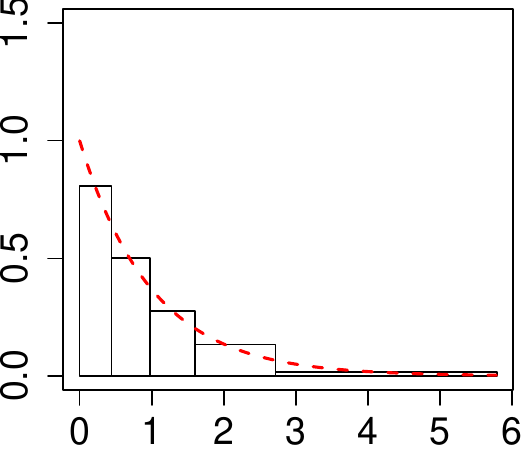} &
\includegraphics[width=0.23\textwidth]{./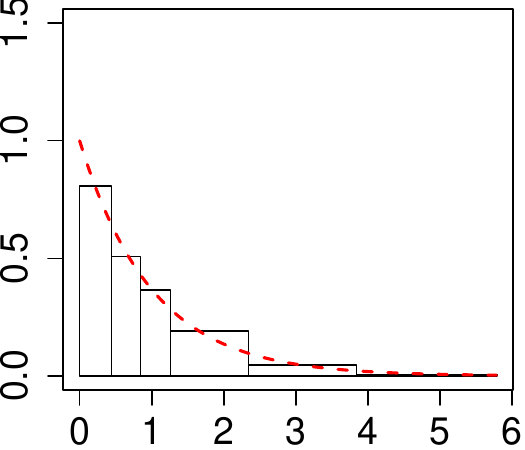}\\
{\small (e)} &{\small  (f)} &{\small (g) }&{\small  (h) }\\
\includegraphics[width=0.23\textwidth]{./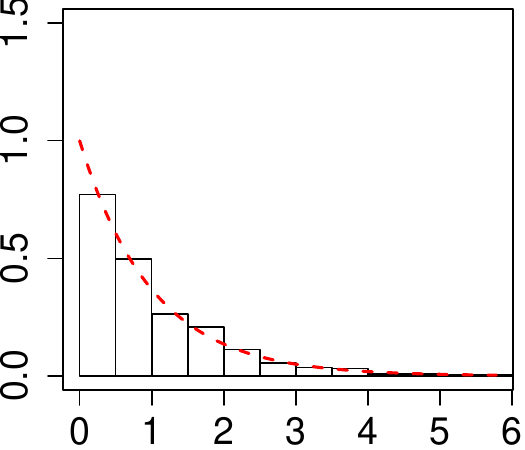} &
\includegraphics[width=0.23\textwidth]{./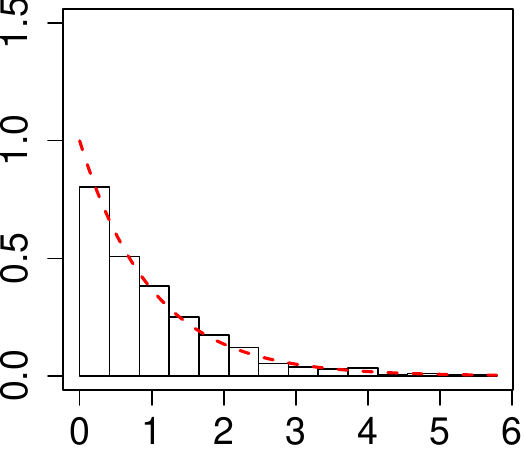} &
\includegraphics[width=0.23\textwidth]{./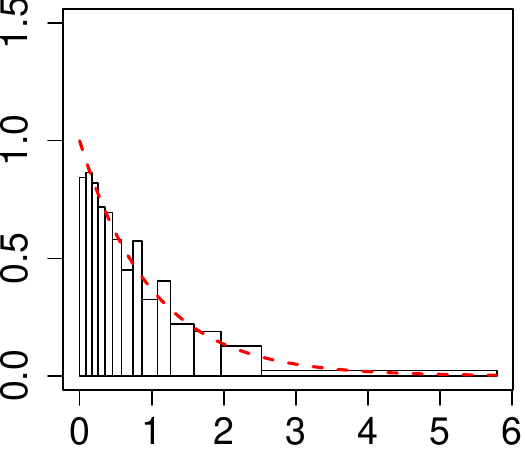} &
\includegraphics[width=0.23\textwidth]{./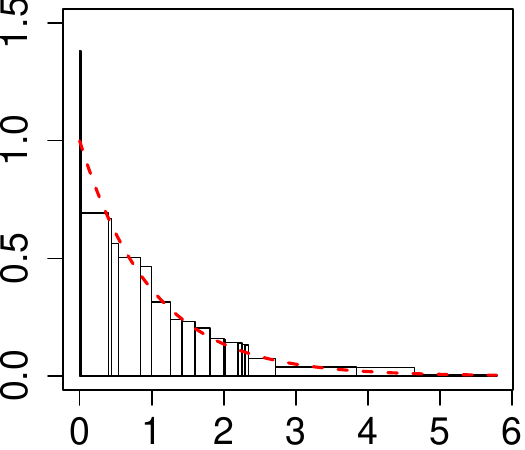} 
\end{tabular}
\caption{Exponential density: (a)-(h) are the same as in Fig.~\ref{fig:uniform}; sample size $n =500$.}
\label{fig:exponential}
\end{figure}

\begin{figure}[!h]
\centering
\begin{tabular}{llll}
{\small (a) }& {\small (b)} &{\small (c) }& {\small (d)} \\
\includegraphics[width=0.23\textwidth]{./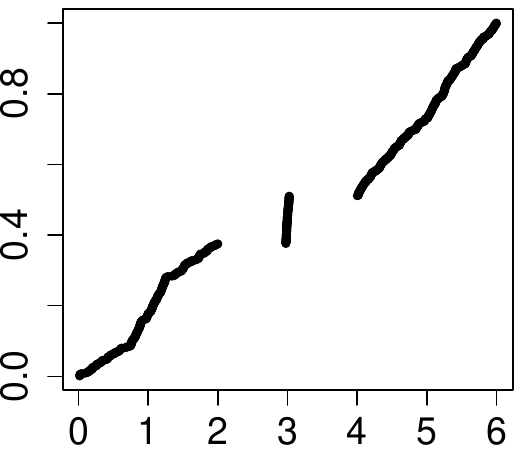} &
\includegraphics[width=0.23\textwidth]{./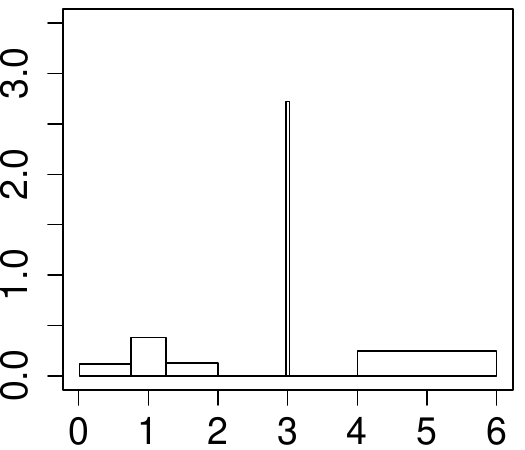} &
\includegraphics[width=0.23\textwidth]{./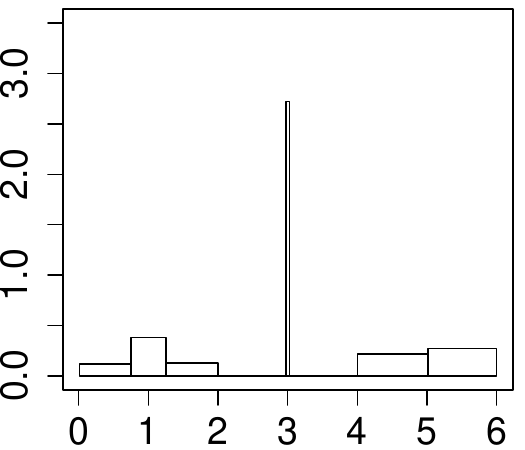} &
\includegraphics[width=0.23\textwidth]{./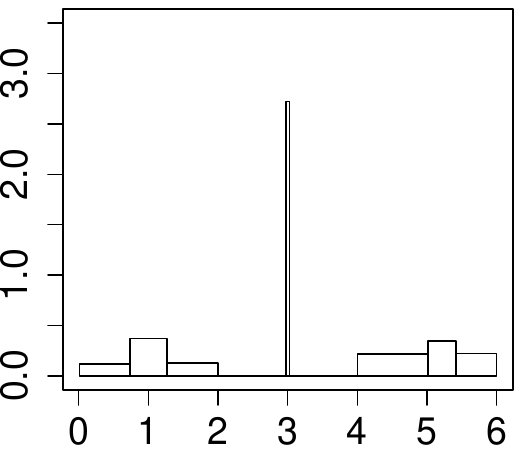}\\
{\small (e)} &{\small  (f)} &{\small (g) }&{\small  (h) }\\
\includegraphics[width=0.23\textwidth]{./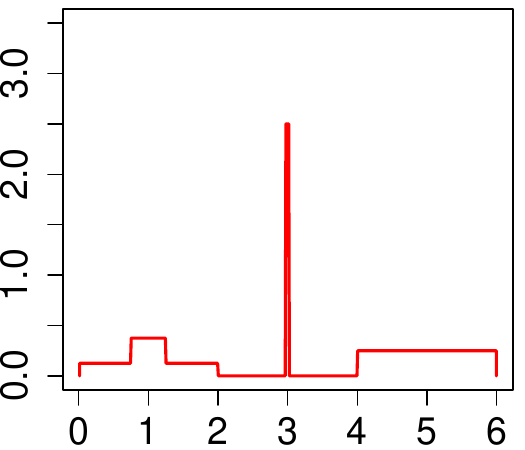} &
\includegraphics[width=0.23\textwidth]{./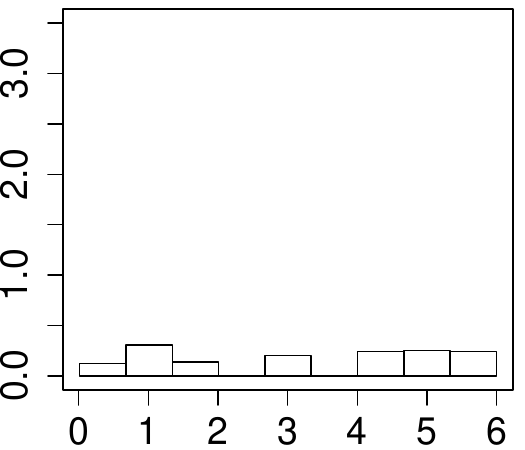} &
\includegraphics[width=0.23\textwidth]{./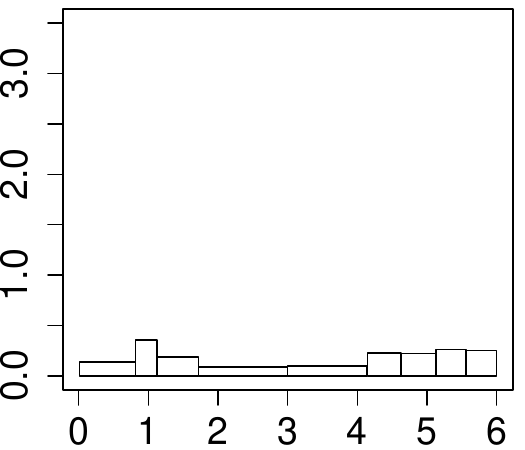} &
\includegraphics[width=0.23\textwidth]{./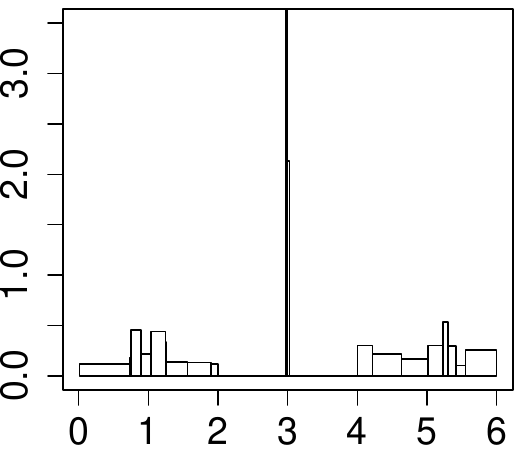} 
\end{tabular}
\caption{Histogram density: (a)-(d) and (f)-(h) are the same as in Fig.~\ref{fig:uniform}; (e) the true density; sample size $n =800$.}
\label{fig:mix_unif}
\end{figure}

{Histogram density:}
This example is about the distribution $\frac{1}{4}\mathcal{U}(0, 2) + \frac{1}{8}\mathcal{U}(0.75, 1.25) + \frac{1}{8}\mathcal{U}(2.975, 3.025) + \frac{1}{2}\mathcal{U}(4, 6)$, consisting of three different regions: an ordinary one mode region, a sharp spike region, and a flat region. The comparison is given in Fig.~\ref{fig:mix_unif}, Tables~\ref{tab:mix_unif_mode} and~\ref{tab:mix_unif_bin}. The essential histogram with a wide range of significance levels performs substantially better than all the others. It recovers all three regions of the true density fairly well, and greatly outperforms other methods with respect to the detection of correct number of bins, which reflects the theoretical finding in Theorem~\ref{thmHistDensity}. For a fixed sample size, the essential histogram tends to introduce slightly more false bins, and slightly more false modes, for larger significance levels $\alpha$. By contrast, the DK estimator often noticeably over-estimates the height of the spike, and introduces many distinct modes in the one mode and the flat regions. Its ability in identifying the true number of modes first improves, but later deteriorates as the sample size increases. The classical histograms again perform worst and seriously flatten the central spike.

{Claw density:}
The comparison on the claw density from~\cite{MarWan92} is given in Figs.~\ref{fig:claw} and~\ref{fig:claw_tm}, and Tables~\ref{tab:claw},~\ref{tab:claw_nbin} and~\ref{tab:claw_skewness}. 
The essential histogram performs well in both mode detection and density estimation for large sample sizes $n$ {or} high significance levels $\alpha$. For a fixed $n$, it recovers more {details of the density} from the data as $\alpha$ increases, at the expense of statistical confidence. This reveals the ability of the essential histogram as a potential exploratory tool for the analysis of data, {and we suggest to view the nominal level $\alpha$ as a screening parameter.} Small $\alpha$ provides reliable confidence statements in Theorems~\ref{thmFeatureInfer} and~\ref{thmD2};  a large $\alpha$ typically leads to a better recovery e.g., in mode detection. For a fixed $\alpha$, the performance of essential histogram improves as $n$ increases, {which supports the theoretical finding in Theorem~\ref{thmD2}.}  Also the essential histogram needs the fewest bins to detect the correct number of modes (Table~\ref{tab:claw_nbin}). Empirically, solutions in a range of $\alpha$ between 0.5 and 0.9 always look very similar (Fig.~\ref{fig:claw}) revealing a certain stability if estimation is the primary goal.  Moreover, the essential histogram recovers the shape of the truth in such a reliable way that the skewness of estimated histograms almost coincide with that of the truth (Table~\ref{tab:claw_skewness}).   
The \DK estimator is among the best in mode detection, while it slowly starts to include false modes as $n$ increases. However, it performs not so well in estimating the height of each mode, and the number of bins within each peak varies to a large extent (Fig.~\ref{fig:claw}). The latter potentially leads to misinterpretation of the data (e.g.,~one might wrongly infer that the peaks are of completely different shape). In addition, the \DK estimator gives the largest number of bins among all methods, which further complicates the interpretation of the data.  
For classical histograms, the Scott's rule is better than the Sturges' rule in both mode detection and skewness preservation, but it tends to report more (both true and false) modes as $n$ increases. The equal bin width histogram gives better estimation at tail region (low density), while the equal block area one is more preferable in the central region (high density). 
Regarding computation time, the essential histogram is the slowest, while being still affordable: e.g.,~it just takes around 1 second for 3,000 observations (Fig.~\ref{fig:claw_tm}). Seemingly, the computation time is of the same order for all methods, i.e., linearly increasing in~$n$.

\begin{figure}[!h]
\centering
\begin{tabular}{ll}
{\small (a)} & {\small (b)}  \\
\includegraphics[width=0.45\textwidth]{./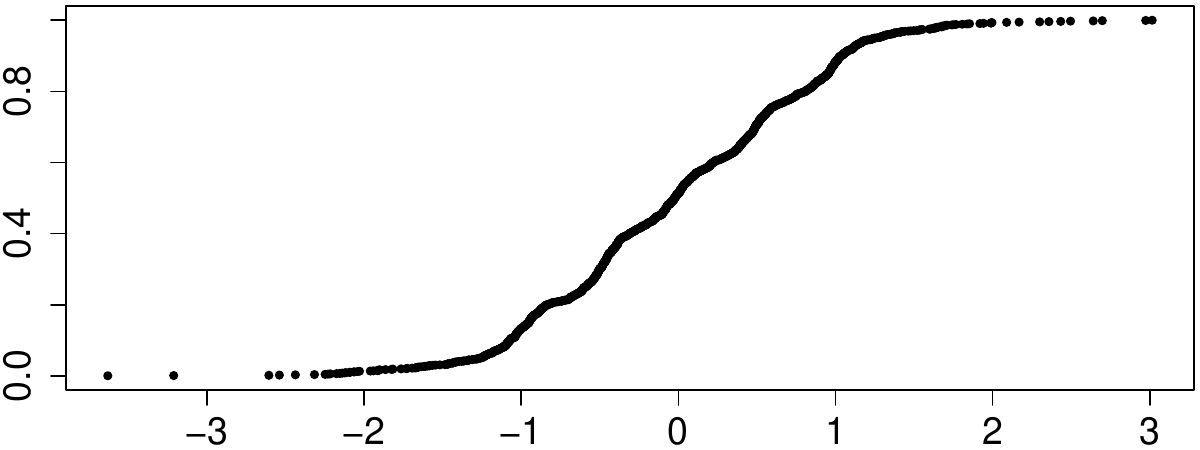} &
\includegraphics[width=0.45\textwidth]{./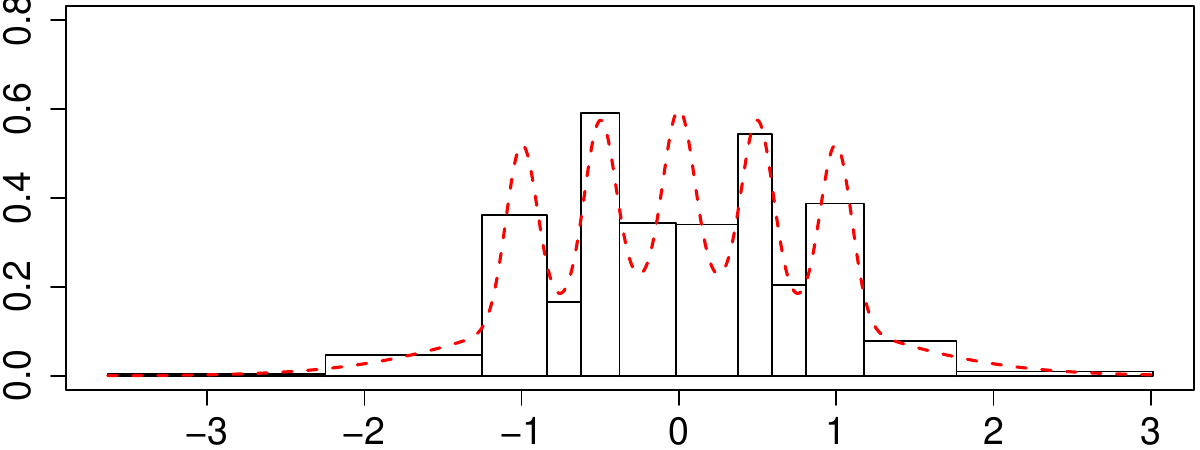}\\
{\small (c)} & {\small (d)} \\
\includegraphics[width=0.45\textwidth]{./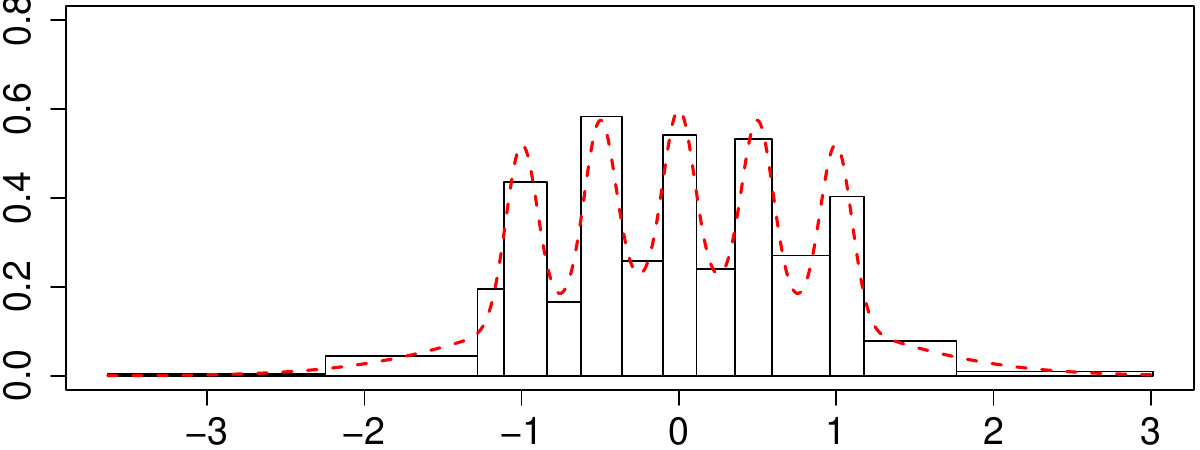} &
\includegraphics[width=0.45\textwidth]{./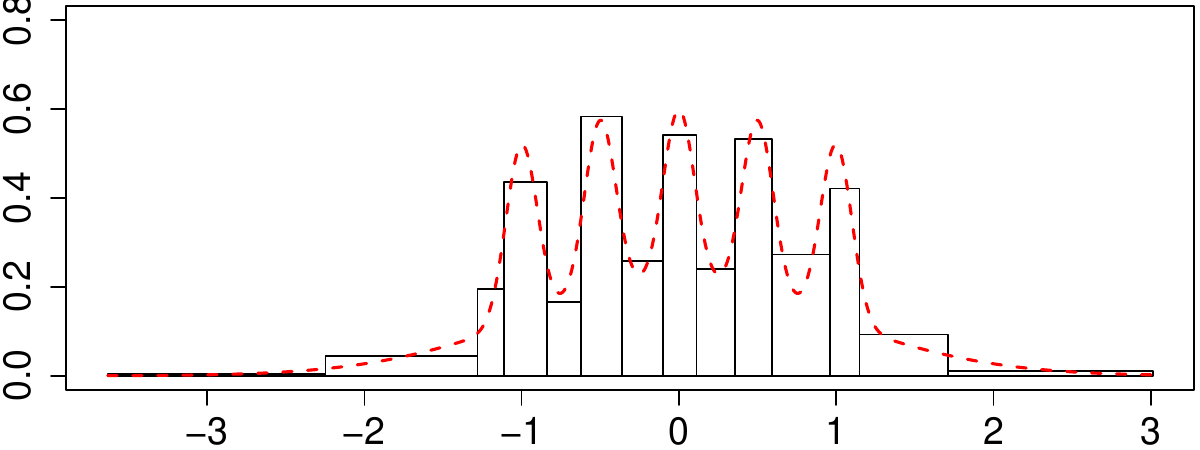} \\
{\small (e)} & {\small (f)} \\
\includegraphics[width=0.45\textwidth]{./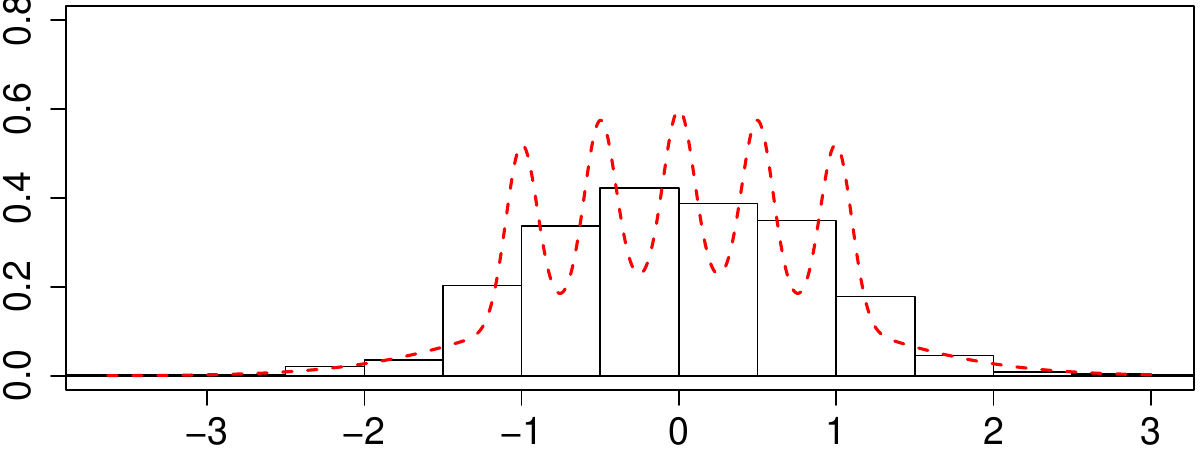} &
\includegraphics[width=0.45\textwidth]{./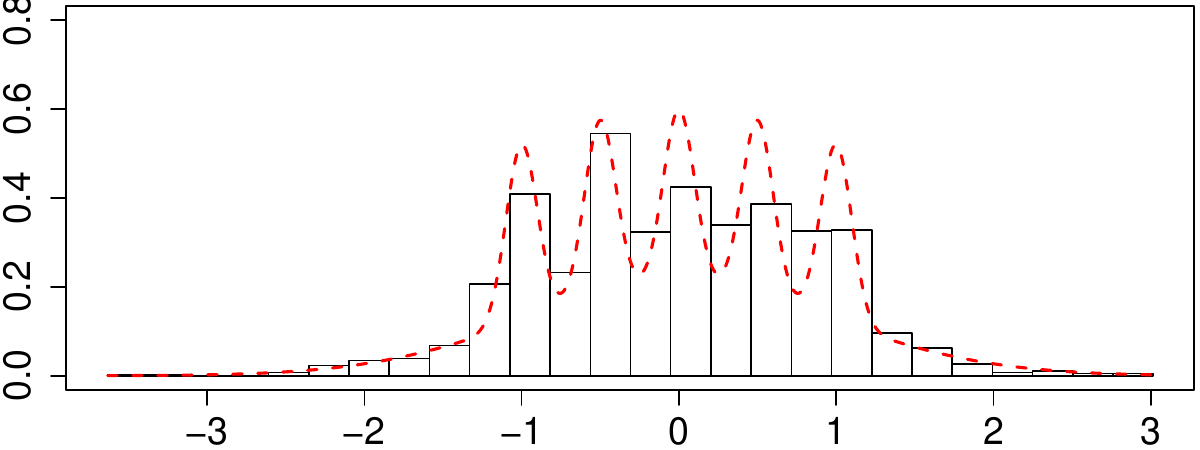}\\
{\small (g)} & {\small (h)} \\
\includegraphics[width=0.45\textwidth]{./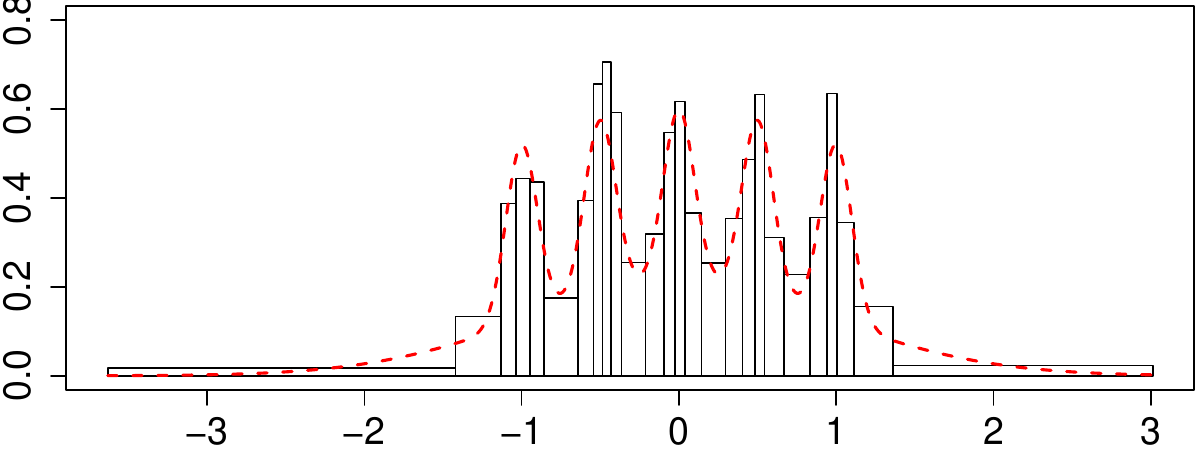} &
\includegraphics[width=0.45\textwidth]{./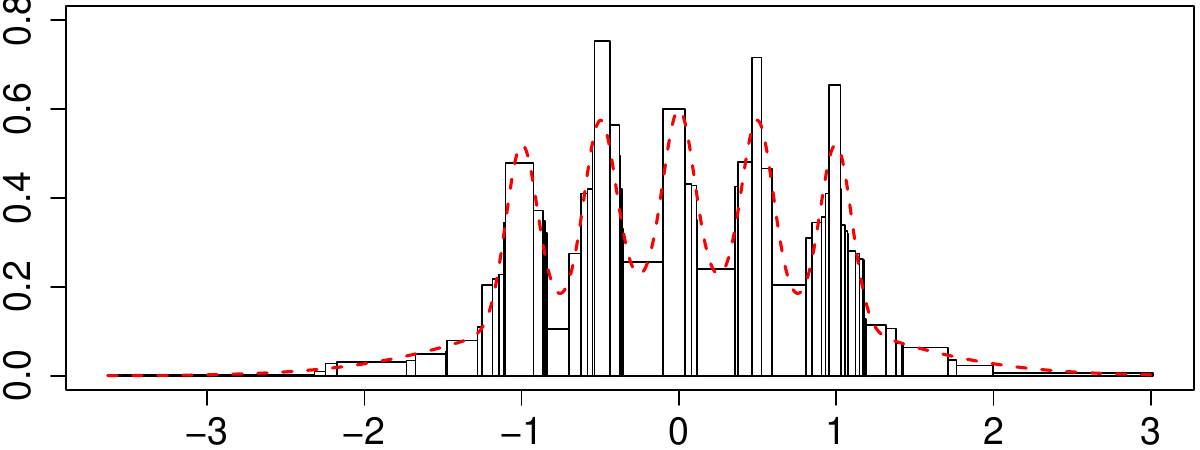} 
\end{tabular}
\caption{Claw density: (a)-(h) are the same as in Fig.~\ref{fig:uniform}; sample size $n =1,500$.}
\label{fig:claw}
\end{figure}

\begin{figure}[!h]
\centering
\includegraphics[width=0.8\textwidth]{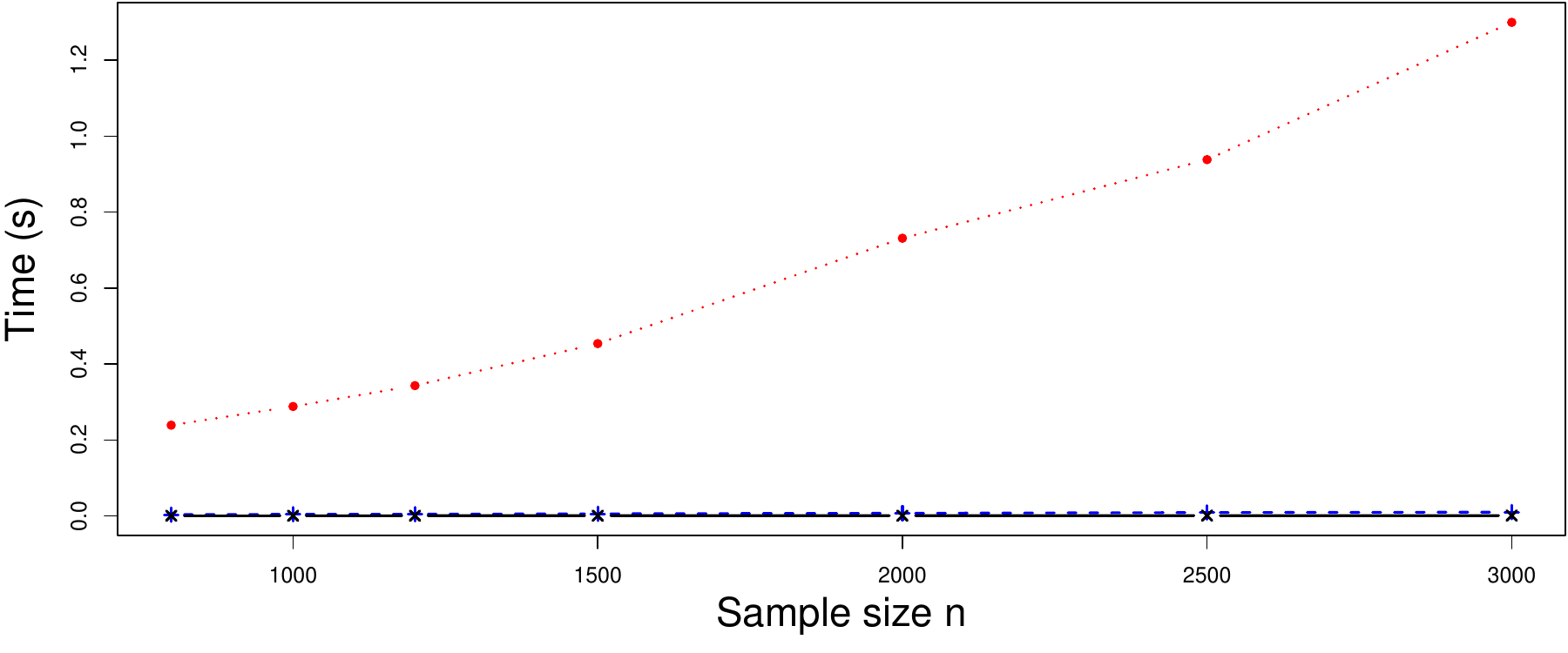}
\caption{Computation time in the claw example (a signle 3.3 GHz processor with two cores; 8GB memory) averaged over 500 runs. The classical histograms (solid), the \DK (dashed) and the essential histogram (dotted). \label{fig:claw_tm}}
\end{figure}

{Harp density:} We now consider the Gaussian mixture density, $0.2 \mathcal{N}(0, 0.5) + 0.2  \mathcal{N}(5, 1) + 0.2  \mathcal{N}(15, 2) + 0.2  \mathcal{N}(30, 4) + 0.2  \mathcal{N}(60, 8)$, termed the {harp density} due to the similarity in shape (cf.~Fig.~\ref{fig:harp}). It encodes the difficulty to have modes at several scales, increasingly more difficult to detect from left to right.  The comparison is shown in Fig.~\ref{fig:harp}, and Tables~\ref{tab:harp_mode},~\ref{tab:harp_mise},~\ref{tab:harp_kol} and~\ref{tab:harp_skew}. 
The essential histogram with various significance levels $\alpha$ is overall the best in recovering the shape of the true density (Fig.~\ref{fig:harp}), and also quantitatively regarding skewness (Table~\ref{tab:harp_skew}). Concerning mode detection, the essential histogram with larger $\alpha$ usually performs better, at the expense of lower confidence about the inference. For large sample sizes ($n \ge 1500$), the essential histogram with different $\alpha$ will eventually identify the correct number of modes (Table~\ref{tab:harp_mode}).  Further, the essential histogram outperforms all other methods in estimation error measured by the Kolmogorov metric, and is only slightly worse than DK in terms of the mean integrated squared error (Tables~\ref{tab:harp_mise} and~\ref{tab:harp_kol}). The \DK estimator is again the best in mode detection, but it has a tendency to bias the exact shapes and locations of modes (see e.g.~the local maxima near 30 in Fig.~\ref{fig:harp}); It also significantly underestimates the skewness of the truth (Table \ref{tab:harp_skew}). The classical histograms are generally less competitive; visually, the equal bin width histograms perform better in the region $[40, 60]$, while the equal block area histogram is better in $[0, 40]$, see again Fig.~\ref{fig:harp}. Moreover, the equal block area histogram is preferred in~mode detection and estimation error, but the equal bin width histogram is favored in~skewness preservation. This dilemma in deciding between these two type of histograms reflects the underlying difficulty of the problem.

\begin{figure}[!t]
\centering
\begin{tabular}{ll}
{\small (a)} & {\small (b)}  \\
\includegraphics[width=0.45\textwidth]{./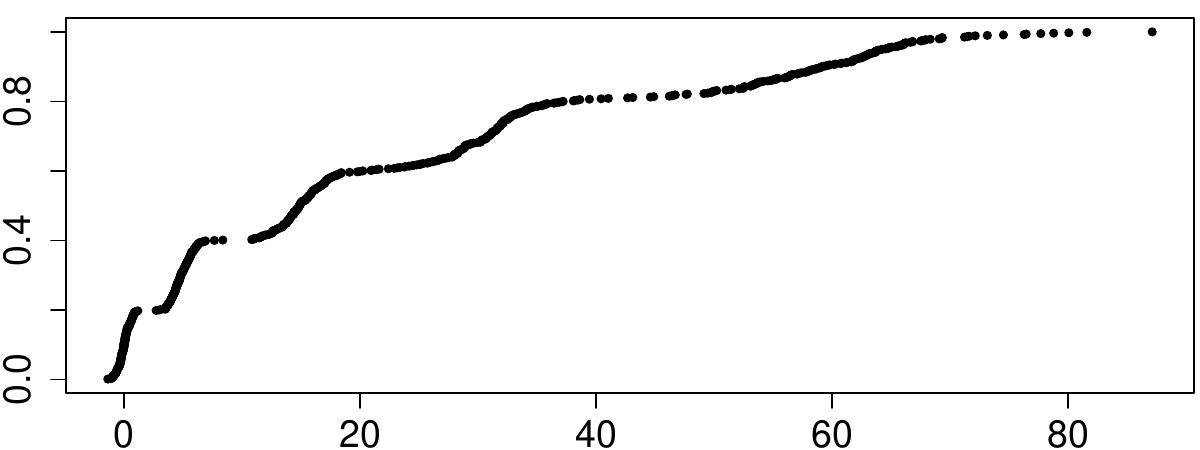} &
\includegraphics[width=0.45\textwidth]{./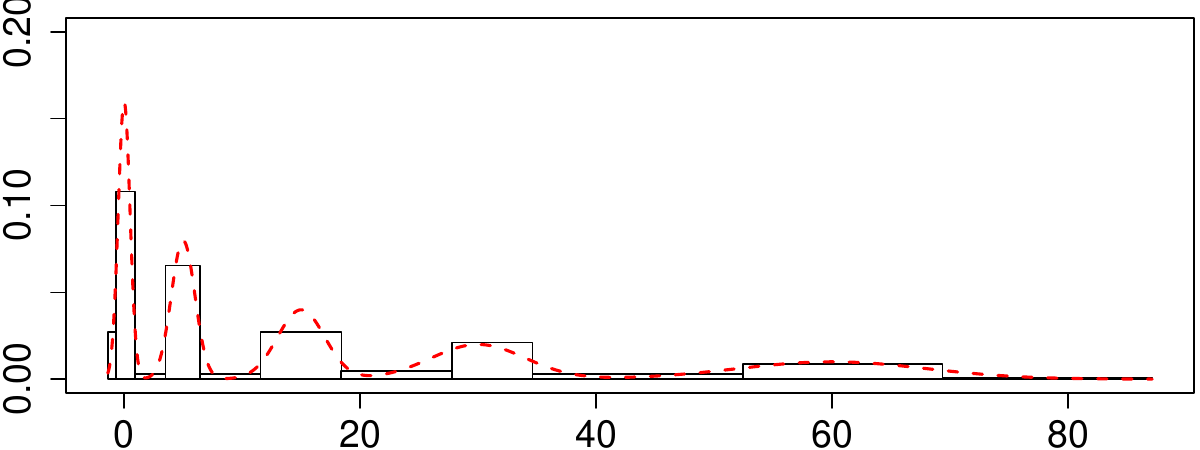}\\
{\small (c)} & {\small (d)} \\
\includegraphics[width=0.45\textwidth]{./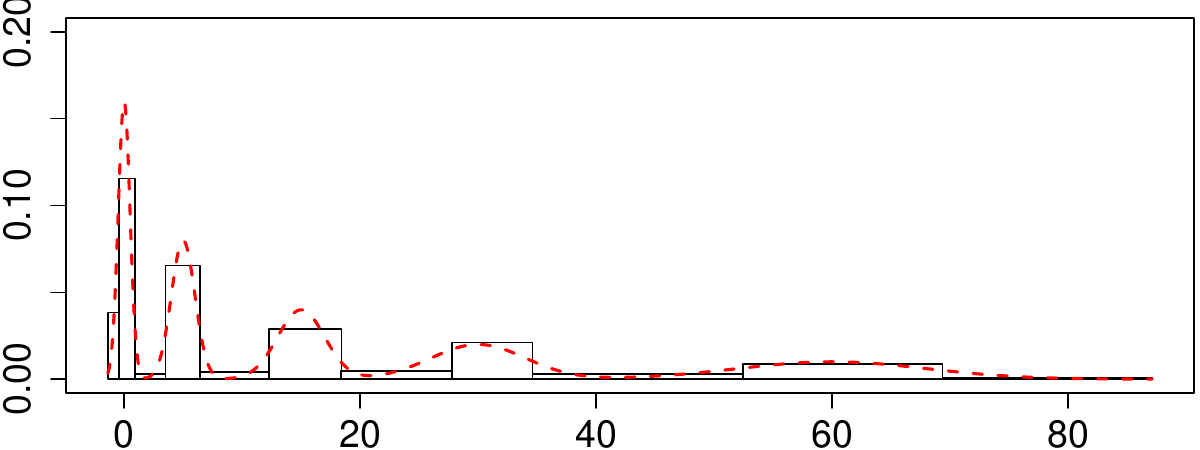} &
\includegraphics[width=0.45\textwidth]{./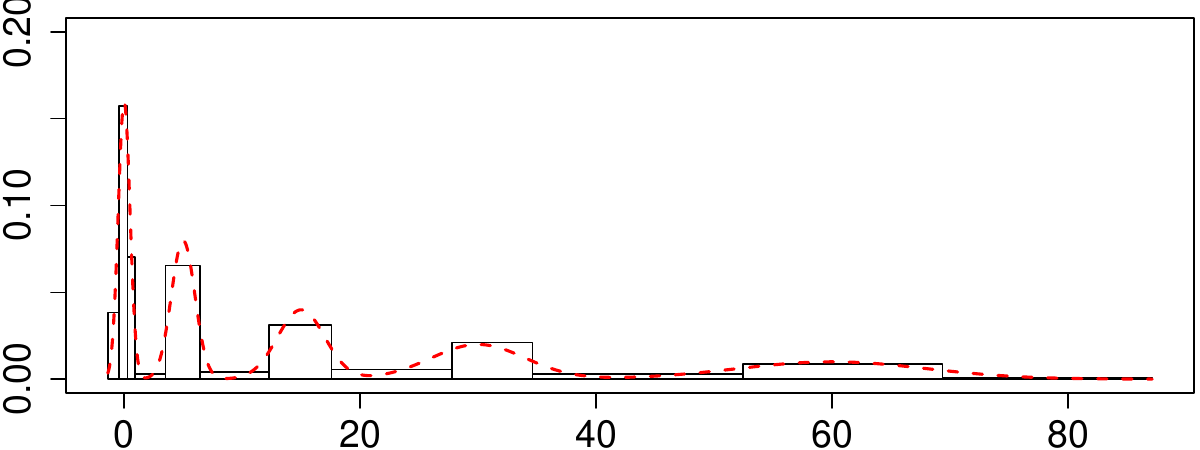} \\
{\small (e)} & {\small (f)} \\
\includegraphics[width=0.45\textwidth]{./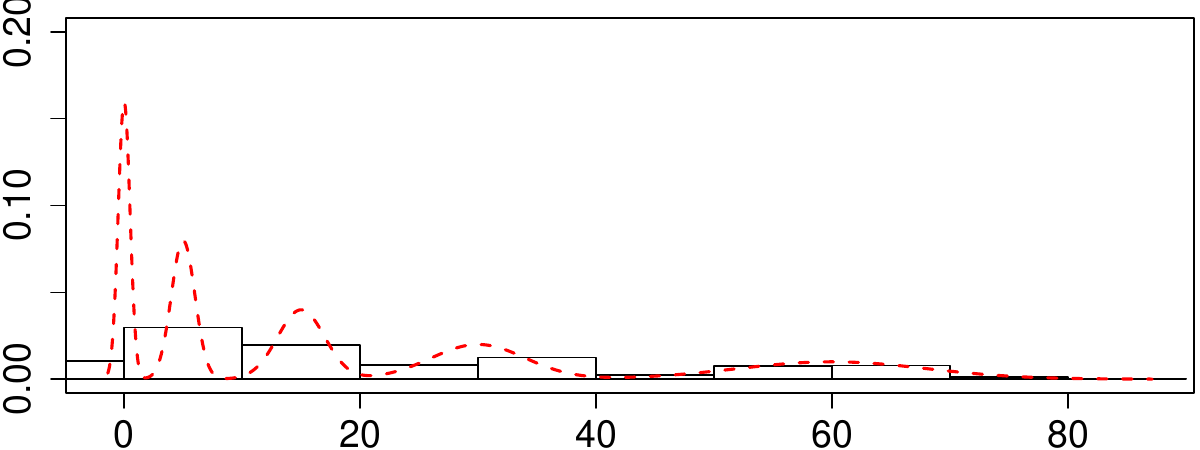} &
\includegraphics[width=0.45\textwidth]{./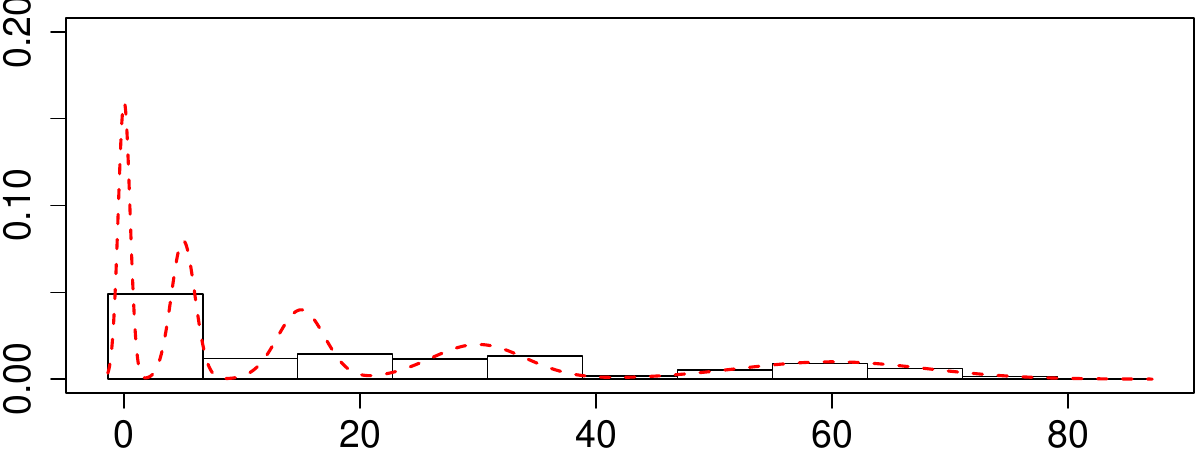}\\
{\small (g)} & {\small (h)} \\
\includegraphics[width=0.45\textwidth]{./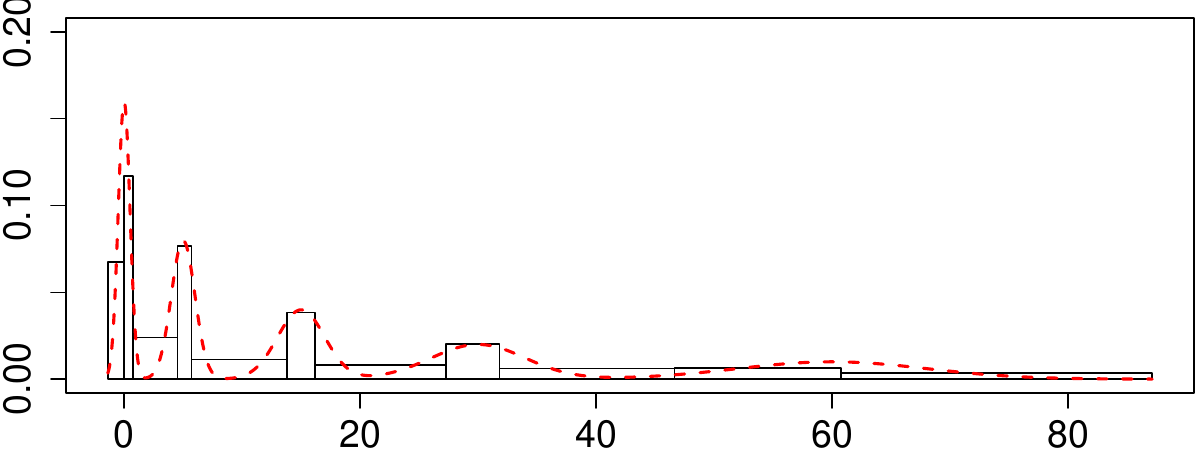} &
\includegraphics[width=0.45\textwidth]{./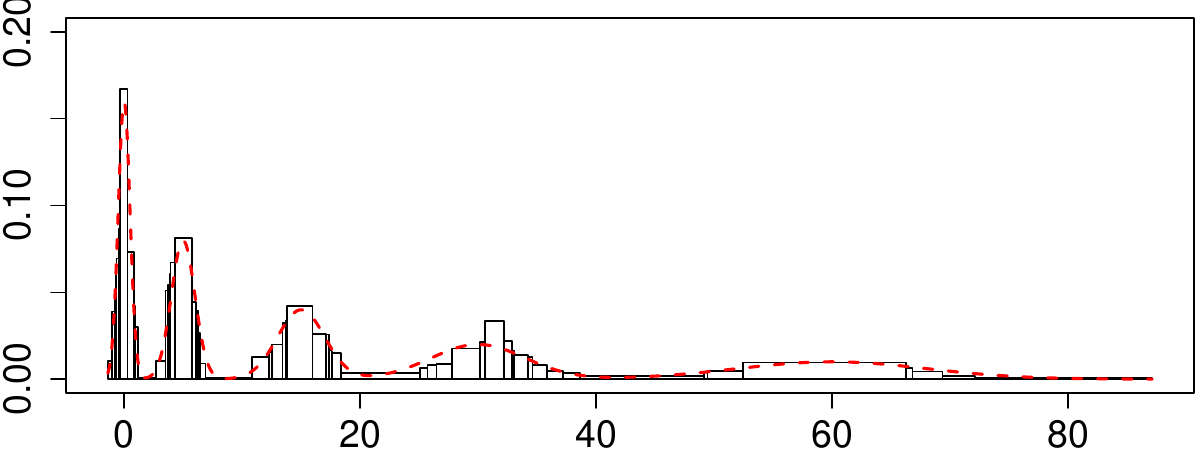} 
\end{tabular}
\caption{Harp density: (a)-(h) are the same as in Fig.~\ref{fig:uniform}; sample size $n = 800$.}
\label{fig:harp}
\end{figure}

{Heavy tails:} Figure~\ref{fig:cauchy}, Tables~\ref{tab:cauchy} and~\ref{tab:cauchy_nbin} give the comparison on the standard Cauchy density $f(x) = 1/ \bigl(\pi(1+x^2)\bigr)$, a typical one with heavy tails. Overall,  the essential histogram and the \DK estimator outperform the classical histograms. Both perform nearly perfectly in mode detection. For density estimation, the essential histogram recovers the truth quite well with only a few bins, while the \DK estimator tends to include many unnecessary slim bins, and sometimes overestimates the peak of the truth. Further, the essential histogram is the most robust against outliers, as indicated by the little changes of number of bins (Table~\ref{tab:cauchy_nbin}). For classical histograms, the one with equal bin width detects the major features, but may largely overestimate the true peak. By contrast, the one with equal block area completely distorts the shape of the truth, although still identifies the correct number of modes with moderate frequency. 

\begin{figure}[!h]
\centering
\begin{tabular}{llll}
{\small (a) }& {\small (b)} &{\small (c) }& {\small (d)} \\
\includegraphics[width=0.23\textwidth]{./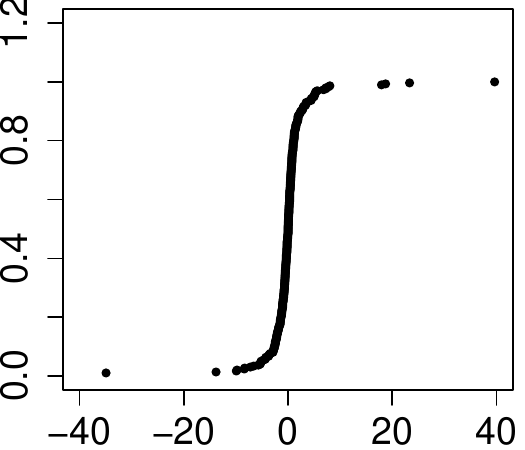} &
\includegraphics[width=0.23\textwidth]{./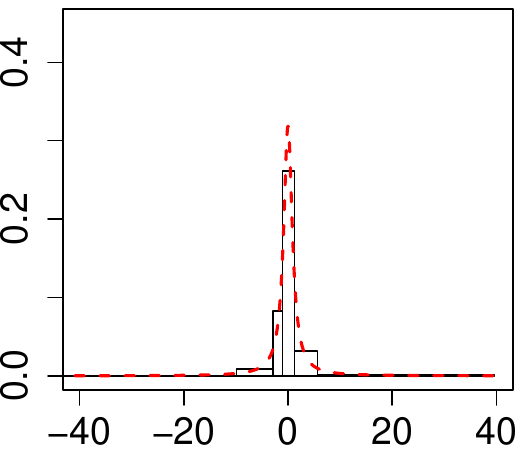} &
\includegraphics[width=0.23\textwidth]{./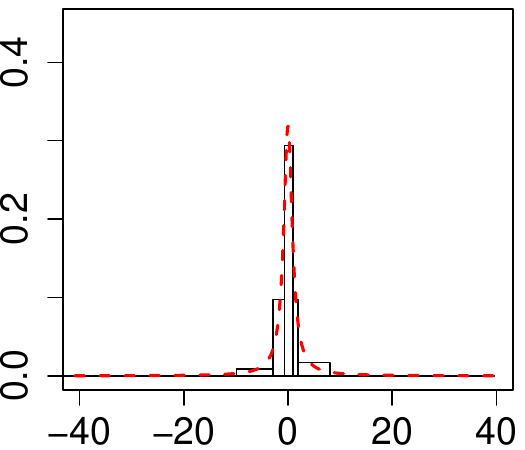} &
\includegraphics[width=0.23\textwidth]{./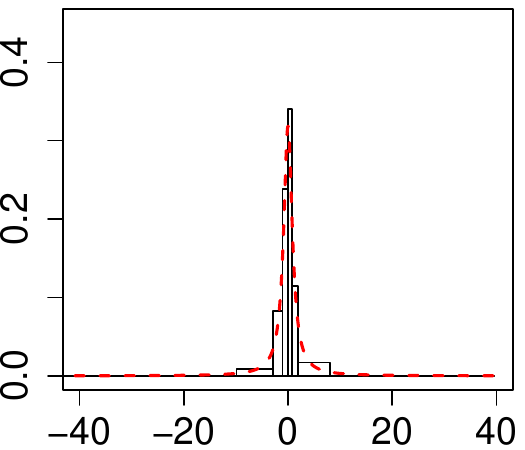}\\
{\small (e)} &{\small  (f)} &{\small (g) }&{\small  (h) }\\
\includegraphics[width=0.23\textwidth]{./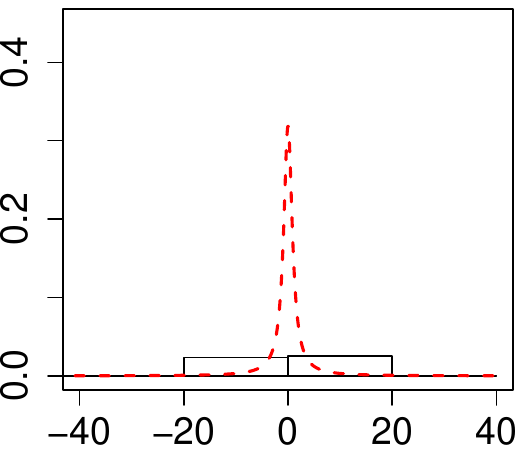} &
\includegraphics[width=0.23\textwidth]{./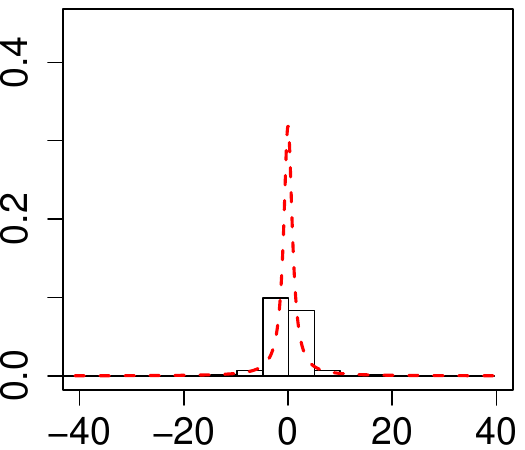} &
\includegraphics[width=0.23\textwidth]{./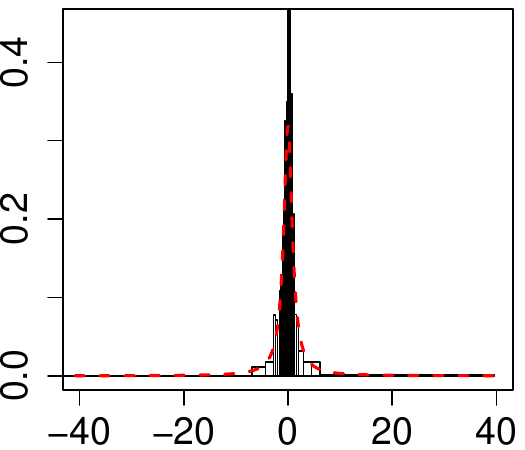} &
\includegraphics[width=0.23\textwidth]{./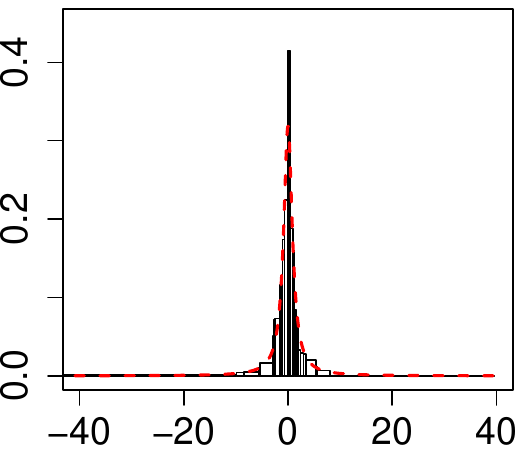} 
\end{tabular}
\caption{Cauchy density: (a)-(h) are the same as in Fig.~\ref{fig:uniform}; sample size $n =300$.}
\label{fig:cauchy}
\end{figure}

\begin{figure}[!h]
\centering
\begin{tabular}{l}
{\small (a)} \\
\includegraphics[width=0.9\textwidth]{./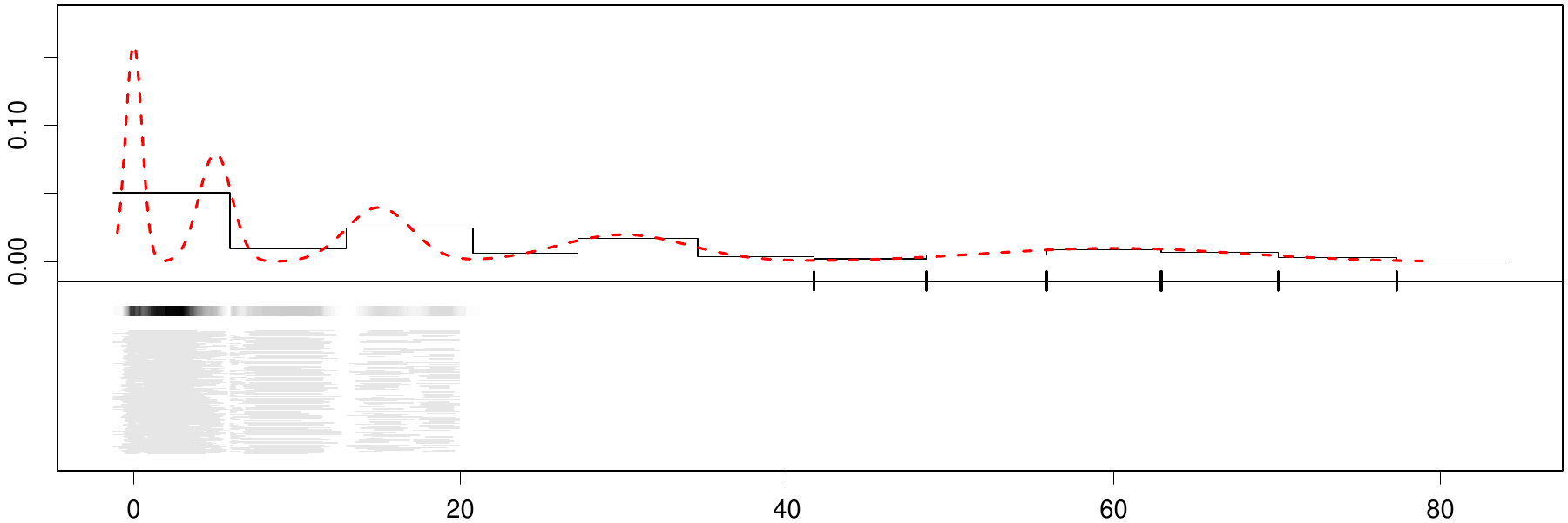}\\
{\small (b)} \\
\includegraphics[width=0.9\textwidth]{./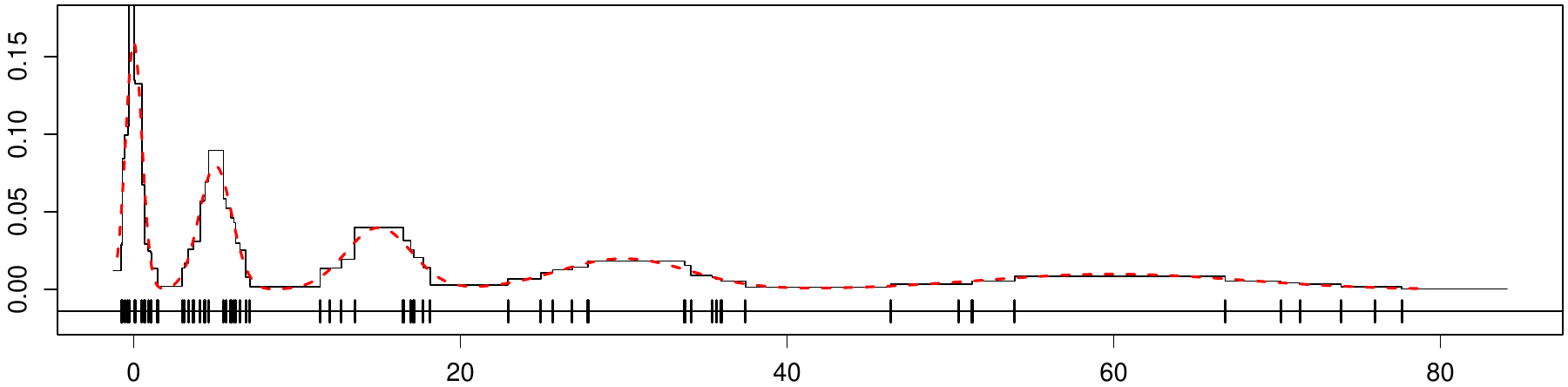} 
\end{tabular}
\caption{Evaluation for the harp example: (a) the equal bin width histogram with the Scott's rule; (b) the \DK estimator. 
In each panel, sample size $n =10^3$ and the truth (dashed); The lower part are intervals where the violation of local constraints occurs; The gray scale bar (middle) summarizes such violations with the darkness reflecting the number of violation intervals covering a location; Short vertical lines (middle) mark removable breakpoints. }
\label{fig:eval_msConst}
\end{figure}

\subsection{{Multiscale constraint as an evaluation tool}}\label{ss:evaTool}

The multiscale constraint $\tilde{C}_n(\alpha)$ in~\eqref{eq:adj_ess_hist} can be beneficial to any histogram estimator  $\hat\mu$ as an evaluation tool. We can e.g., check, for each $I$ in $\mathcal{J}$, where $\hat\mu$ is constant, whether the corresponding local constraint 
$
 \left({2 \lLR \left(\int_I \hat\mu(x)dx,F_n(I)\right)}\right)^{1/2} - \pen(F_n(I))  \leq \kappa_n(\alpha)
$ is fulfilled. The collection of all intervals where the local constraints are violated shows whether and where  $\hat\mu$ misses important features (i.e.~false negatives). Further, $\tilde{C}_n(\alpha)$ can be used to find superfluous breakpoints (i.e.~false positives). To this end, we consider whether merging the two nearby estimated segments still satisfies $\tilde{C}_n(\alpha)$. If it is the case, the breakpoint there is said to be removable. Though all the removable breakpoints cannot be simultaneously removed, any sub-collection of removable breakpoints, such that every two are not end points of a common segment, are simultaneously removable. The evaluation in terms of violation intervals and removable breakpoints is simultaneously valid with confidence $1-\alpha$, see Theorems~\ref{thmFeatureInfer} and \ref{thmD2}. An example is in Fig.~\ref{fig:eval_msConst}: The classical histogram recovers modes of medium size well, but misses the spiky modes, and reports redundant breakpoints for the wide spread modes; The \DK estimator has no violation intervals (thus missing no modes), but gives many unnecessary breakpoints. 

\section*{Acknowledgement}
We thank the editors and reviewers for constructive comments, and are grateful to Anthony Unwin for helpful comments and pointing us to several data sets which led to an improvement of our methodology. Li, Sieling and Munk are supported by the Deutsche Forschungsgemeinschaft (DFG, German Research Foundation) CRC 803, Z02 and under Germany's Excellence Strategy -- EXC 2067/1 - 390729940, and Walther by the U.S.~National Science Foundation grants DMS 1220311 and DMS 1501767.

\begin{appendices}

\section[Optimality of the confidence region]{Optimality of the confidence region $C_n(\alp)$}  \label{optConfInt}

The first part of Theorem~\ref{thmA}
and Theorem~\ref{thmB} show that $C_n(\alp)$ in \eqref{eq:Cn} is an optimal confidence region for continuous $F$ with
respect to the distance $d_p$ in \eqref{eq:dp} for arbitrary $p$: The first part of Theorem~\ref{thmA} shows
that with probability converging to one, $C_n(\alp)$ will exclude $H$ with
$d_p(F,H) \geq (1+\eps_n) \bigl({2 \log (e/p)/n}\bigr)^{1/2}$, where $\eps_n \downarrow 0$ sufficiently
slowly. In the case of small $p$, Theorem~\ref{thmB} shows that if $1+\eps_n$ is replaced by
$1-\eps_n$, then no test can distinguish $F$ and $H$ with nontrivial power.
In the case of larger $p$, i.e. when $p$ stays bounded away from zero,
the condition of the first part of Theorem~\ref{thmA} becomes $d_p(F,H) \geq n^{-1/2}B_n$
with $B_n \ra \infty$. On the other hand, a contiguity argument as in the proof of
Theorem~4.1(c) in \citet{DueWal08} shows that for any test to have
asymptotic power 1 against a sequence $H^n$ requires $d_p(F,H^n)=n^{-1/2}B_n$
with $B_n \ra \infty$.

\begin{theorem}  \label{thmB}
Let $\phi_n(\bs{X})$, with $\bs{X} =(X_1, \ldots, X_n)$,  be any test with level $\alp \in (0,1)$ under $H_0:\
X_1,\ldots,X_n$ are independent samples from a continuous distribution function $F$. If $(\log n)^2/{n} \leq p_n \ra 0$ and
$\eps_n \in (0,1)$ such that $\eps_n ({\log e/p_n})^{1/2} \ra \infty$, then
$$
\inf_{H:\ d_{p_n}(F,H) \geq (1-\eps_n) \left(\frac{2}{n} \log \frac{e}{p_n}\right)^{1/2}}
\Ex_H \phi_n(\bs{X})\ =\ \alp +o(1) \, .
$$
\end{theorem}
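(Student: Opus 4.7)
My plan is to exploit an Ingster-style Bayesian mixing argument: construct $M\asymp 1/p_n$ alternatives $H_1,\dots,H_M$, each at the critical $d_{p_n}$-distance from $F$, and show that their uniform mixture $\bar H^n$ is indistinguishable from $F^n$ in total variation. Since the infimum of power is bounded by the Bayes risk, this forces $\inf_H\Ex_H\phi_n\to\alp$.

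\paragraph{Construction and Bayes reduction.} I would first reduce to the case $F=\mathcal U(0,1)$ by the probability integral transform. Take $M:=\lfloor 1/p_n\rfloor$ disjoint sub-intervals $I_1,\dots,I_M\subset[0,1]$ with $F(I_i)=p_n$, and for each $i$ let $H_i$ have density $f_i:=1+(\delta_n/p_n)\mathbf{1}_{I_i}-(\delta_n/(1-p_n))\mathbf{1}_{[0,1]\setminus I_i}$ with $\delta_n:=(1-\eps_n)\sqrt{2p_n(1-p_n)\log(e/p_n)/n}$. Then $H_i(I_i)=p_n+\delta_n$ so $d_{p_n}(F,H_i)\ge(1-\eps_n)\sqrt{2\log(e/p_n)/n}$, making each $H_i$ admissible. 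Putting uniform prior on $\{H_i\}$ and writing $\bar H^n:=M^{-1}\sum_iH_i^{\otimes n}$, the chain $\inf_i\Ex_{H_i^n}[\phi_n]\le\Ex_{\bar H^n}[\phi_n]\le\alp+TV(F^n,\bar H^n)$ leaves me to prove $TV(F^n,\bar H^n)\to 0$.

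\paragraph{Truncated likelihood ratio.} Setting $L_i:=dH_i^{\otimes n}/dF^{\otimes n}$ and $L:=M^{-1}\sum_iL_i$, the naive bound $\chi^2(\bar H^n\|F^n)=(\Ex_F[L_1^2]-1)/M\asymp(e/p_n)^{2(1-\eps_n)^2-1}$ diverges as $\eps_n\to 0$: this is the familiar gap between the $\chi^2$-boundary $\sqrt{\log M}$ and the sharp Ingster boundary $\sqrt{2\log M}$. I would close the gap by truncation. The statistics $\log L_i=\sum_k\log f_i(X_k)$ are sums of bounded i.i.d.\ increments with means $\mp(1-\eps_n)^2\log(e/p_n)$ and standard deviations $\asymp(1-\eps_n)\sqrt{2\log(e/p_n)}$ under $F^n$ and $H_i^n$; Bernstein's inequality (valid because $np_n\ge\log^2 n\to\infty$) then gives Gaussian-grade tails at this scale. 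I pick $T_n$ with $\log T_n:=(1-\eps_n)^2\log(e/p_n)+u_n(1-\eps_n)\sqrt{2\log(e/p_n)}$, where $u_n\to\infty$ and $u_n=o(\eps_n\sqrt{\log(e/p_n)})$, a choice made possible precisely by the hypothesis $\eps_n\sqrt{\log(e/p_n)}\to\infty$. Then (i) $\max_i\Pr_{H_i^n}(L_i>T_n)\to 0$ (the threshold is $u_n\to\infty$ standard deviations above the $H_i^n$-mean), and (ii) $T_n/M\to 0$ (the gap $\log M-(1-\eps_n)^2\log(e/p_n)\asymp\eps_n\log(e/p_n)$ dominates $u_n\sqrt{\log(e/p_n)}$). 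With $\tilde L:=M^{-1}\sum_iL_i\mathbf{1}_{L_i\le T_n}$ and $e_n:=\max_i\Pr_{H_i^n}(L_i>T_n)$, the diagonal estimate $\Ex_F[L_i^2\mathbf{1}_{L_i\le T_n}]\le T_n$ combined with the off-diagonal $\Ex_F[L_iL_j]\le 1$ (computed from $\int f_if_j\,dF=1-\delta_n^2/(1-p_n)^2$) yields $\Ex_F[(\tilde L-1)^2]\le T_n/M+2e_n\to 0$, while $\Ex_F[L-\tilde L]\le e_n\to 0$, so $TV(F^n,\bar H^n)\to 0$ by the triangle inequality. The main obstacle is threading $T_n$ through the narrow window strictly between the $H_i^n$-typical value of $L_i$ and $M$: the window has log-width $\asymp\eps_n\log(e/p_n)$ while the Gaussian fluctuations are of order $\sqrt{\log(e/p_n)}$, so only the hypothesis $\eps_n\sqrt{\log(e/p_n)}\to\infty$ makes the ratio favourable.
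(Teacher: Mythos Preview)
Your proposal is correct and follows essentially the same route as the paper: reduce to $F=U[0,1]$, construct $M=\lfloor 1/p_n\rfloor$ identically tilted alternatives on disjoint intervals $I_j$, and show the mixture likelihood ratio converges to $1$ in $L^1(F^{\otimes n})$ via a truncated second-moment argument that exploits $\Ex_F[L_iL_j]\le 1$ for $i\neq j$. The only difference is cosmetic: the paper truncates at a fixed multiple $\eps m_n$ and invokes Lemma~7.4 of D\"umbgen--Walther (2008) for the tail bound $\Ex_F[L_1\mathbf{1}(L_1>\eps m_n)]\to 0$, whereas you select an explicit $n$-dependent threshold $T_n$ and control the tail directly via Bernstein, which makes your version slightly more self-contained.
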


\begin{remark} 
The price for simultaneously considering all $H \in C_n(\alp)$
in the second part of Theorem~\ref{thmA}, as opposed to a fixed sequence $H=H_n$ in the first part,
is a doubling of the distance $d_{p_n}(F,H)$: For a fixed sequence of intervals $I=I_n$,
the standardized distance between $F(I)$ and $F_n(I)$ becomes negligible
compared to the radius $\bigl({2 \log e/F_n(I)}\bigr)^{1/2}$ of the confidence ball around
$F_n(I)$. But if one needs to consider all intervals simultaneously, then for the
worst-case interval $I$ the
standardized distance between $F_n(I)$ and $F(I)$ is also about $\bigl(2 \log e/F_n
(I)\bigr)^{1/2}$.

The proof of Theorem~\ref{thmA} shows that the first part holds even for smaller intervals,
namely for $p_n \in [2 \log (n)/n,1/2)$, provided that also $F(I) > H(I)$.
If $F(I)<H(I)$, then (\ref{RES}) requires a different bound. For example,
if $p_n=k \log (n)/n$, $k\geq 2$, then (\ref{RES}) requires
\be  \label{SMALLI}
d_{p_n} (F,H) \ >\ (1+\eps_n) ( {2}^{1/2} +{{k}^{-1/2}})
\left(\frac{1}{n} \log \frac{e}{p_n}\right)^{1/2}\,,
\ee
and it is not clear whether this result can be improved. Note that Theorem~\ref{thmB}
does not provide a lower bound for scales of order $\log (n)/n$.

The construction of $C_n(\alp)$ via the log likelihood ratio statistic
$\lLR (H(I),F_n(I))$ rather than,
say, the standardized binomial statistic ${n}^{1/2}{|H(I) -F_n(I)|}{\bigl({
H(I)(1-H(I))}\bigr)^{-1/2}}$ is crucial for these optimality results: While the tail of
${n}^{1/2}{|F(I) -F_n(I)|}{\bigl({F(I)(1-F(I))}\bigr)^{-1/2}}$ is close to sub-Gaussian, it does
vary with $F(I)$ and becomes increasingly heavy as $F(I)$ decreases to 0, see~\citet[Chapter~11.1]{ShoWel86}. It is thus not clear how to construct
a penalty that is effective in combining the evidence  on the various scales $p_n$.
For example, if $F(I)=k \log(n)/n$ for some fixed $k>0$, then the penalty
$\bigl({2 \log \cdots}\bigr)^{1/2}$ in the definition of $C_n(\alp)$ would not be sufficiently
large for the standardized binomial statistic and therefore the optimality result
(\ref{RESI}) would not hold, at least in the case $F(I)>H(I)$.
\end{remark}

\section{Proofs} \label{proofs}

{
To ease understanding, we recommend interested readers to read also~\cite{RivWal13}. In all the proofs, we will make it explicit where the continuity assumption on $F$ is used. Whenever the assertions (more precisely, the first part in Theorem~\ref{thmC1}, and Theorems~\ref{thmA}, \ref{thmFeatureInfer} and \ref{thmD2}) also hold for general, possibly discontinuous $F$,  we will detail how such an extension is possible.} Recall that $\pen(F_n(I))$ is defined in \eqref{eq:pen}.  We will make use of the following

\begin{lemma}   \label{quadapprox}
$H \in C_n(\alp)$ implies for all $I \in \JJ$:
\begin{subequations}
\begin{align}
{n}^{1/2}\frac{|H(I)-F_n(I)|}{\max(H(I),F_n(I))} & \leq c_n(I)\,, \label{eq:quadProxA}\\
{n}^{1/2}\frac{|H(I)-F_n(I)|}{{M}^{1/2}} & \leq c_n(I)\qquad \text{  if } \max(F_n(I),H(I)) \leq \frac{1}{2}\,, \label{eq:quadProxB}\\
{n}^{1/2}\frac{|H(I)-F_n(I)|}{{m}^{1/2}} & \leq c_n(I) +\frac{c_n^2(I)}{2{(mn)^{1/2}}}\,, \label{eq:quadProxC}
\end{align}
\end{subequations}
where $c_n(I)=\pen(F_n(I)) +\kappa_n(\alp)$,
$M=\max \bigl(F_n(I)(1-F_n(I)), H(I)(1-H(I))\bigr)$, and
$m=\min \bigl(F_n(I)(1-F_n(I)), H(I)(1-H(I))\bigr)$.

Likewise, Lemma~\ref{quadapprox} hold for all $H \in \tilde{C}_n(\alp)$ and $I \in \JJ$
where $H^{\prime}$ is constant.
\end{lemma}

\begin{proof}
We note that Taylor's theorem implies for $H \in C_n(\alp)$
\begin{equation}  \label{Lstar}
\begin{split}
\frac{c_n^2(I)}{2n} \ & \geq \ \frac{\lLR (H(I),F_n(I))}{n} \\
 & = \ \frac{(H(I)-F_n(I)))^2}{2 \xi (1-\xi)} \ \mbox{ for some $\xi$ between $H(I)$ and $F_n(I)$}\\
& \geq \ \begin{cases}
{(H(I)-F_n(I))^2}/{\bigl(2\max(H(I),F_n(I))\bigr)}\\
{(H(I)-F_n(I))^2}/{(2M)} & \text{if $\max(H(I),F_n(I)) \leq {1}/{2}$}
\end{cases}
\,,
\end{split}
\end{equation}
since $\xi (1-\xi)$ is increasing for $\xi \in (0,1/2]$, proving \eqref{eq:quadProxA} and \eqref{eq:quadProxB}.
The assertion \eqref{eq:quadProxC} follows by applying (K.5) in \cite{DueWel14}
to \eqref{Lstar}. 
\end{proof}

\begin{proof}[of Theorem~\ref{thmC1}]
{Note that the law of  $d_{p}(F,F_n)$ for an arbitrary $F$ is bounded above by the law of $d_{p}(F,F_n)$ for a continuous $F$, and that the latter does not depend on $F$.} Thus, we may assume $X_1,\ldots,X_n$ are independent samples from $F=U[0,1]$. The statement of the theorem is closely
related to the modulus of continuity of the uniform empirical process, see~\citet[Chapter 14.2]{ShoWel86}. Unfortunately, the available results appear not strong enough
to cover the case where $B_n \ra \infty$ slowly. Therefore we employ the Hungarian construction
together with elementary calculations and recent results about Brownian motion.

By \citet[Chapter 12.3]{ShoWel86}, there exists a sequence $W_n$ of Brownian motions {on the same probability space}
such that
$$
\limsup_n \frac{{n}^{1/2}}{\log n} \sup_{I \subset [0,1]} \Bl| {n}^{1/2} \Bl(F_n(I)-|I|\Br)
-B_n(I) \Br| \leq M < \infty \ a.s.,
$$
where $B_n(t)=W_n(t)-t W_n(1)$, {a standard Brownian bridge.} Writing

\begin{equation*}
\begin{split}
R_n & =  \sup_{I \subset [0,1]:\ |I| \in [\frac{\log^2n}{n},\frac{1}{2})}
  \left| \frac{{n}^{1/2} (F_n(I)-|I|)- W_n(I)}{\bigl({|I|(1-|I|)}\bigr)^{1/2}} \right| \\
& \leq  \sup_{I \subset [0,1]} {\frac{(2n)^{1/2}}{\log n}}
  \ \Bl| {n}^{1/2} (F_n(I)-|I|) -B_n(I) \Br| +|W_n(1)| \,,\\
\end{split}
\end{equation*}
we obtain $R_n=O_p(1)$ and
$$
\Bl| {n}^{1/2} \ d_{p}(F,F_n) - \sup_{I:|I|=p} \frac{|W_n(I)|}{\bigl({p(1-p)}\bigr)^{1/2}} \Br| \leq R_n
\ \mbox{ for all } p \in \Bl[\frac{\log^2n}{n},\frac{1}{2}\Br)\,.
$$

The first statement of the theorem now follows from 
\citet[Theorem~2.1, also Section~6.1]{DueSpo01} together with $(1-p)^{-1/2} \leq 1+2p$ and the fact that
$p ({2 \log e/p})^{1/2}$ stays bounded in $p$.

For the second claim we note that
$$
\sup_{I:|I|=p_n} \frac{|W_n(I)|}{\bigl({p_n(1-p_n)}\bigr)^{1/2}}\ \geq \ \max_{i=1,\ldots,m_n} N_i\,,
$$
where the $N_i=\bigl(W_n(i p_n)-W_n((i-1)p_n)\bigr) p_n^{-1/2}$ are independent, and identically distributed from $\mathcal{N}(0,1)$ and $m_n=\lfloor 1/p_n \rfloor$. Without loss of generality, we may assume that $B_n$ is such that $\lam_n =
({2 \log e/p_n})^{1/2} -B_n/2 \ra \infty$. Mill's ratio gives
\begin{equation*}
\begin{split}
\Pr \Bl( \max_{i=1,\ldots,m_n} & N_i \leq \lam_n \Br) \\
& \leq \Bl( 1- \frac{1}{4\lam_n} \exp (-\lam_n^2/2) \Br)^{m_n} \\
& \leq \exp \Bl\{ \frac{-m_n \frac{p_n}{e} \exp \Bl\{B_n \Bl(\bigl({2 \log e/p_n}\bigr)^{1/2}-B_n/4\Br)/2
\Br\}}{4 \lam_n} \Br\} \\
& \leq \exp \Bl\{- \frac{\exp \Bl( B_n ({2 \log e/p_n})^{1/2} /4\Br)}{5e ({2 \log e/p_n})^{1/2}} \Br\}
=o(1)\,.
\end{split}
\end{equation*}

Theorem~7.1(b) in \cite{Due03} suggests that the second statement of the theorem holds for
any collection of estimators $(H_n(I))_I$, so it is not possible to improve on the performance
of $F_n$. We will not engage in the lengthy technical work required to establish that
claim. 
\end{proof}

\begin{proof}[of Theorem~\ref{thmA}]
To avoid lengthy technical work we will prove
the theorem using $\JJ = \{$ all intervals in $\R \}$ in the definition of $C_n(\alp)$.
The technical work in \cite{RivWal13} shows that the approximating set of
intervals used in \S\ref{confset} is fine enough so that the optimality results
continue to hold with that approximating set.

To prove the first part we will show that for arbitrary $p_n \in [2 \log (n)/n,1/2)$,
$H=H_n$ and $I=I_n$ with $F(I)=p_n$ and
\begin{multline}  \label{RES}
{n}^{1/2} \frac{|H(I)-p_n|}{\bigl({p_n(1-p_n)}\bigr)^{1/2}}\ >\\  (1+\eps_n)
\Bl({2}^{1/2}
 +{\left\{\frac{\log (e/p_n)}{np_n(1-p_n)}\right\}^{1/2}} \ind(F(I)<H(I))\Br)
\left({\log \frac{e}{p_n}}\right)^{1/2}\,,
\end{multline}
we have
\be  \label{RESI}
\Pr_F(H \in C_n(\alp)) \ \ra \  0\ \ \mbox{ uniformly in $F$}\,.
\ee
The first claim follows since for $p_n \in (\log^2(n)/n,1/2)$ we have
$$
(1+\eps_n) \left({\frac{\log e/p_n}{np_n(1-p_n)}}\right)^{1/2} \ \leq \
(1+\eps_n) (\log n)^{-1/2} \ =\ o(\eps_n)\,,
$$
since $\eps_n \gg (\log e/p_n)^{-1/2} \geq (\log n)^{-1/2}$.

On the other hand, if $p_n=k \log(n)/n$ for some $k \geq 2$, then
$$
\left({\frac{\log e/p_n}{np_n(1-p_n)}}\right)^{1/2} \ =\ k^{-1/2} +o\Bl((\log n)^{-1/2}\Br)\,,
$$
yielding (\ref{SMALLI}) in the case where $F(I)<H(I)$.

To prove (\ref{RESI}) set $c_n=(1+\eps_n) ({2 \log e/p_n})^{1/2}$. Then the inequality
(\ref{RES}) reads
\be  \label{A0}
{n}^{1/2} \frac{|H(I)-p_n|}{\bigl({p_n(1-p_n)}\bigr)^{1/2}} > c_n + \frac{c_n^2}{2(1+\eps_n)
\bigl({np_n(1-p_n)}\bigr)^{1/2}}\ \ind(F(I)<H(I))\,.
\ee
We have $b_n=\min (B_n,{(\log n)^{1/2}}) \ra \infty$ by the assumption of the theorem,
and we define the event  $\AA_n =\{{n}^{1/2} |F_n(I)-F(I)| \leq \bigl({b_nF(I)(1-F(I))}\bigr)^{1/2}\}$.
We will show that on the event $\AA_n$, (\ref{A0}) implies for sufficiently large $n$ 
\be  \label{A1}
{n}^{1/2} \frac{|H(I)-F_n(I)|}{\bigl({F_n(I) (1-F_n(I))}\bigr)^{1/2}}
\geq \tilde{c}_n +\frac{\tilde{c}_n^2}{2\bigl({nF_n(I)(1-F_n(I))}\bigr)^{1/2}}
\ \ind(F_n(I)<H(I))\ ,
\ee
uniformly in $H$ and $I$, where $\tilde{c}_n=\pen(F_n(I)) +b_n^{1/2}/4$. Hence Lemma~\ref{quadapprox}~\eqref{eq:quadProxB} and~\eqref{eq:quadProxC} give
\begin{equation*}
\begin{split}
&\Pr_F(H \in C_n(\alp)) \\  
\leq\ & \ \Pr_F \biggl(\frac{|H(I)-F_n(I)|}{\bigl({F_n(I) (1-F_n(I))}\bigr)^{1/2}} \leq \tilde{c}_n +\frac{\tilde{c}_n^2}{2\bigl({nF_n(I)(1-F_n(I))}\bigr)^{1/2}}\ind(F_n(I)<H(I)) \biggr)\\
& \ \ \ \ \mbox{ once } {b_n}^{1/2}/4 > \kappa_n(\alp) \\
\leq \ & \Pr_F(\AA_n^c)\ \ \ \text{ eventually}\\
\leq \ & \frac{1}{b_n}\,,
\end{split}
\end{equation*}
by Chebychev's inequality, and the above conclusions are uniform in $F,H$ and $I$. Thus
\eqref{RESI} follows since $\kappa_n(\alp)=O(1)$ by \citet[Proposition~1]{RivWal13} if $F$ is continuous. {The above argument holds for general (possibly discontinuous)  $F$ as well; In that case, we will use $\kappa_n^*(\alpha)$ instead of $\kappa_n(\alpha)$, which is also $O(1)$ by Lemma~\ref{le:ks}. }

It remains to prove (\ref{A1}). On $\AA_n$ we have
\be  \label{A2}
\biggl| \frac{F_n(I)}{F(I)} -1\biggr|\ \leq \ \left({\frac{b_n(1-F(I))}{nF(I)}}\right)^{1/2}
\ \leq \ \left({\frac{b_n}{\log n}}\right)^{1/2}\,,
\ee
since $F(I) \geq 2 \log(n)/n$, and the same bound applies to $(1-F_n(I))/(1-F(I))$
since $F(I) \leq 1/2$. Hence
$$
\left| \left({\frac{F_n(I)(1-F_n(I))}{p_n(1-p_n)}}\right)^{1/2}-1\right| \ \leq \ \left({\frac{b_n}{\log n}}\right)^{1/2}\,.
$$
So on $\AA_n$:
\begin{equation}   \label{A3}
\begin{split}
& {n}^{1/2}\frac{|H(I)-F_n(I)|}{\bigl({F_n(I)(1-F_n(I))}\bigr)^{1/2}} \\  
\geq & \ {n}^{1/2}\frac{|H(I)-F(I)|-\sqrt{F(I)(1-F(I))b_n/n}}{\bigl({p_n(1-p_n)\bigr)^{1/2}}(1+
\sqrt{b_n/\log n})} \\
\geq &\ {n}^{1/2}\frac{|H(I)-F(I)|}{\sqrt{p_n(1-p_n)}} \Bl(1-\sqrt{\frac{b_n}{
\log n}}\Br) -{b_n}^{1/2}\\
\geq & \ \biggl( c_n+\frac{c_n^2\ \ind(F(I)<H(I))}{2(1+\eps_n)\sqrt{np_n(1-p_n)}}
\biggr) \Bl(1-\sqrt{\frac{b_n}{\log n}}\Br) -{b_n}^{1/2}\,,
\end{split}
\ee
by (\ref{A0}). {Next for sufficiently large $n$
\begin{equation*}
\begin{split}
2\log \frac{e}{p_n} \ & \geq \  2\log \frac{e}{F_n(I)(1-F_n(I))}
+ 2\log \frac{F_n(I)}{2F(I)} \\
& \geq \ 2\log \frac{e}{F_n(I)(1-F_n(I))} - 4\log 2 \\
& = \ \pen(F_n(I))^2 - 4\log 2\,,
\end{split}
\end{equation*}
since $F_n(I) \in (p_n/2, \ 1/2)$ eventually.} Hence
\begin{equation*}
\begin{split}
c_n \ & = \  {\sqrt{2\log\frac{e}{p_n}} +  2^{1/2}B_n}\\
& \geq \tilde{c}_n +b_n/4\ \ \ \text{eventually,}\\
\end{split}
\end{equation*}
and
\begin{align*}
\frac{c_n^2}{1+\eps_n} \ & = 2 \log\frac{e}{p_n} +2B_n\sqrt{\log\frac{e}{p_n}}\ \\
& \geq \ 2\log\frac{e}{p_n} + \frac{B_n}{2}\sqrt{\log \frac{e}{p_n}} +\frac{3}{2}B_n\ \ \text{eventually}\\
& \geq \ \tilde{c}_n^2\ \ \text{eventually} .
\end{align*}

Finally, (\ref{A2}) yields $(np_n(1-p_n))^{-1/2} \geq (nF_n(I)(1-F_n(I)))^{-1/2}
(1-\sqrt{b_n/\log n})$ eventually. Thus (\ref{A3}) gives
\begin{align*}
{n}^{1/2} & \frac{|H(I)-F_n(I)|}{\sqrt{F_n(I)(1-F_n(I))}}  \\
& \geq \ \biggl\{\tilde{c}_n +\frac{b_n}{4} +\frac{\tilde{c}_n^2\ \ind(F(I)<H(I))}{
2\sqrt{nF_n(I)(1-F_n(I))}}\biggr\} \Bl(1-\sqrt{\frac{b_n}{\log n}}\Br) -b_n^{1/2} \ \ \ \text{eventually}\\
& \geq \ \tilde{c}_n +\frac{\tilde{c}_n^2\ \ind(F_n(I)<H(I))}{
2\sqrt{nF_n(I)(1-F_n(I))}} \ \ \ \text{eventually,}
\end{align*}
since
\begin{align*}
\biggl(\tilde{c}_n +\frac{\tilde{c}_n^2 }{\sqrt{nF_n(I)(1-F_n(I))}}\biggr)
\sqrt{\frac{b_n}{\log n}} \ & \leq \ \Bl(\sqrt{2 \log n} +b_n^{1/2}
+\frac{6 \log n +b_n}{\sqrt{\log n}} \Br) \sqrt{\frac{b_n}{\log n}}\\
& =\ ({2}^{1/2} + 6 +o(1)) b_n^{1/2},
\end{align*}
and because $\AA_n$ and (\ref{A0}) imply that $F_n(I)<H(I)$  if and only if  $F(I) <H(I)$. 
(\ref{A1}) is proved.

To prove the second part we will consider the event
$$
\BB_n = \Bl\{\frac{n^{1/2}|F_n(I)-F(I)|}{\sqrt{F(I)(1-F(I))}}
\leq \sqrt{2\log \frac{e}{F(I)}} + b_n^{1/2} \mbox{ for all $I$ with }
\frac{\log^2 n}{n} \leq F(I) \leq \frac{1}{2}\Br\}
$$
 in lieu of $\AA_n$. Then $\Pr_F(\BB_n) \ra 1$ uniformly in $F$ by Theorem~\ref{thmC1}.
Now we proceed analogously as in the proof of the first part:
Suppose that there exist $H,p_n \in (\log^2(n)/n,1/2)$ and $I$ with $F(I)=p_n$
satisfying
\be  \label{A0II}
n^{1/2}\frac{|H(I)-p_n|}{\sqrt{p_n(1-p_n)}} \ >\ 2c_n
\ee
where $c_n=(1+\eps_n)\bigl\{2\log (e/p_n)\big\}^{1/2}$ as in the first part.
We will show that on the event $\BB_n$, (\ref{A0II}) implies, for $n$ large enough, 
\be  \label{A1II}
n^{1/2} \frac{|H(I)-F_n(I)|}{\sqrt{F_n(I)(1-F_n(I))}} >
\tilde{c}_n +\frac{\tilde{c}_n^2}{2\sqrt{nF_n(I)(1-F_n(I))}}
\ind(F_n(I)<H(I))\,,
\ee
uniformly in $H$ and $I$, where $\tilde{c}_n =\pen(F_n(I)) +b_n^{1/2}/4$ as in the first part. Hence
\begin{align*}
\Pr_F & \Bl( (\ref{A0II}) \mbox{ holds for some } H \in C_n(\alp), p_n \in
\Bl( \frac{\log^2n}{n},\frac{1}{2}\Br) \mbox{ and $I$ with } F(I)=p_n\Br)\\
& \leq \ \Pr_F \biggl( n^{1/2} \frac{|H(I)-F_n(I)|}{\sqrt{F_n(I)(1-F_n(I))}} >
\tilde{c}_n +\frac{\tilde{c}_n^2}{2\sqrt{nF_n(I)(1-F_n(I))}}
\ind(F_n(I)<H(I)) \\ 
& \hspace{2cm} \mbox{ for some } H \in C_n(\alp), p_n \in \Bl(\frac{\log^2n}{n},\frac{1}{2}\Br)\biggr)
  + \Pr_F(\BB_n^c)\ \ \text{eventually}\\
& =\ \Pr_F(\BB_n^c)\ \ \ \ \ \mbox{ once } b_n^{1/2}/4 > \kappa_n(\alp)\ \ \
 \ \ \mbox{ by Lemma~\ref{quadapprox}~\eqref{eq:quadProxB} and~\eqref{eq:quadProxC}}\\
& =\ o(1)\ \ \ \ \mbox{ uniformly in $F$},
\end{align*}
proving the second part. {(As in the first part, we use $\kappa_n^*(\alpha)$ instead of $\kappa_n(\alpha)$ for general $F$.)}

It remains to prove that on $\BB_n$, (\ref{A0II}) implies (\ref{A1II}):
On $\BB_n$ we have
\begin{equation*} 
\Bl| \frac{F_n(I)}{F(I)} -1 \Br| \ \leq \ \frac{\sqrt{2\log \bigl({e}/{F(I)}\bigr)}
+b_n^{1/2}}{\log n}\ \leq \ \frac{2}{\sqrt{\log n}}\ \ \ \text{eventually},
\end{equation*}
since $F(I)\geq \log^2(n)/n$, and the same bound holds for $(1-F_n(I))/(1-F(I))$,
hence
$$
\left| \left({\frac{F_n(I)(1-F_n(I))}{p_n(1-p_n)}}\right)^{1/2}-1 \right| \ \leq \ \frac{2}{\sqrt{\log n}}\,.
$$
Thus on $\BB_n$
\begin{equation}   \label{Astar}
\begin{aligned}
&n^{1/2} \frac{|H(I)-F_n(I)|}{\sqrt{F_n(I)(1-F_n(I))}} \\
 \geq & \
  n^{1/2} \frac{|H(I)-F(I)|-\Bl(\sqrt{2\log ({e}/{p_n})}+b_n^{1/2}\Br)
  \sqrt{{p_n(1-p_n)}/{n}}}{\sqrt{p_n(1-p_n)}\bigl(1+2({\log n})^{-1/2}\bigr)} \\
 \geq &\ n^{1/2} \frac{|H(I)-p_n|}{\sqrt{p_n(1-p_n)}} \Bl(1-\frac{2}{
\sqrt{\log n}} \Br) - \Bl(\sqrt{2 \log ({e}/{p_n})}+b_n^{1/2} \Br)\\
 \geq& \ c_n +\frac{1}{2} B_n\ \ \ \mbox{ by (\ref{A0II})},
\end{aligned}
\end{equation}
since $\sqrt{\log e/p_n} \leq \sqrt{\log n}$. As in the proof of the first part, one finds
$c_n \geq \tilde{c}_n +b_n/4$ eventually. Moreover, on $\BB_n$ we have
$F_n(I) \geq F(I)(1-2({\log n})^{-1/2}) \geq  \log^2(n)/(2n)$, hence
$$
\frac{\tilde{c}_n^2}{\sqrt{nF_n(I)(1-F_n(I))}}\ \leq \ \frac{6 \log n+b_n}{
{2}^{-1} \log n} \ \leq \ 13\ \ \ \text{eventually},
$$
and so (\ref{Astar}) is no smaller than
$$
\tilde{c}_n +\frac{\tilde{c}_n^2}{2 \sqrt{nF_n(I)(1-F_n(I))}}
\ \ind(F_n(I) < H(I))\ \ \ \text{eventually},
$$
completing the proof.
\end{proof}

\medskip

\begin{proof}[of Proposition~\ref{thmC2}]
Let $F$ have density 
$$
f(x)=\frac{3}{2} \ind \bigl(x \in [0,\frac{1}{2})\bigr) +\frac{1}{2}\ind\bigl(x \in
[\frac{1}{2},1]\bigr).
$$ If $k_n$ is odd, then the $(k_n+1)/2$th bin is $\bigl[{1}/{2} -
{1}/{(2k_n)}, {1}/{2}+{1}/{(2k_n)}\bigr)$. Denote the height of the histogram (i.e.~the slope of $H_n$) on that bin by $h$. Set
$$
I =
\begin{cases}
 \Bl[{1}/{2} -{1}/{(6 k_n)},{1}/{2} \Br) &  \text{if $h \leq {3}/{4}$},\\
 \Bl[{1}/{2}, {1}/{2} +{1}/{(2 k_n)} \Br) & \text{if $h > {3}/{4}$}.
\end{cases}
$$
Then $p_n=F(I)={1}/{(4k_n)}$ and $|F(I)-H_n(I)| \geq {1}/{(8k_n)} = {p_n}/{2}$,
hence
$$
n^{1/2}\ d_{p_n} (F,H_n) \ \geq \ n^{1/2} \frac{|F(I)-H_n(I)|}{\sqrt{F(I)(1-F(I))}}
               \ \geq \ \frac{1}{2} \sqrt{n p_n}. 
$$
$ $ \end{proof}

\begin{proof}[of Theorem~\ref{thmFeatureInfer}]
By construction of $C_n(\alp)$ we have $\Pr(F \in C_n(\alp)) \geq 1-\alp$, {(which also holds true for general $F$, if we set $\kappa_n^*(\alpha)$ as the threshold, see Lemma~\ref{le:ks}).} Hence
Lemma~\ref{quadapprox}~\eqref{eq:quadProxC} gives
$$
\Pr \Bl(\bigl|{\bar{f}(I) -{\abs{I}}^{-1}{F_n(I)}}\bigr| \le \frac{1}{2} 
r_n(I) \text{ for all } I \in \JJ \Br) \ge 1-\alpha.
$$
Furthermore, Lemma~\ref{quadapprox}~\eqref{eq:quadProxC} yields
$\bigl|{\bar{h}(I) -{\abs{I}}^{-1}{F_n(I)}}\bigr| \le  
r_n(I)/2$ for all $I \in \JJ$ and every $H \in \tilde{C}_n(\alp) $ 
whose density $h$ is constant on $I$. 
\end{proof}

\begin{proof}[of Theorem~\ref{thmD}]
Let $\eps_n \in (0,1)$ and $p_n \in
\bigl(2 n^{-1}{\log^2 n}, {1}/{2} \bigr)$ be arbitrary sequences with $\eps_n
\sqrt{\log e/p_n} \ra \infty$. In particular, $p_n \ra 0$. 
We proceed as in the proof of Theorem~\ref{thmB} and will construct $m_n=\lfloor
{1}/{2p_n} \rfloor ^2$ densities $f_{njk} \in \II_n(1-\eps_n)$ such that
\be  \label{Dst}
\lim_{n \ra \infty} \Ex \Bl| \frac{1}{m_n} \sum_{j,k=1}^{m_n^{1/2}} L_{njk}-1\Br|\ =\ 0,
\ee
where $L_{njk}=\prod_{i=1}^n f_{njk}(X_i)$ with $X_i$ independent $U(0,1)$. Unfortunately,
the truncation argument used in the proof of Theorem~\ref{thmB} will not go through
as the covariances of the $L_{njk}$ are not small enough. Instead, we follow an idea
in the proof of Theorem~3.1(a) in \cite{DueSpo01} and write $f_{njk}(x)=g_{nj}(x) h_{nk}(x)$,
where for $j,k=1,\ldots,m_n^{1/2}$:
\begin{align*}
g_{nj}(x) &=1_{[0,1]}(x)-c_n1_{I_{nj}}(x)+a_n 1_{[0,{1}/{2})\setminus I_{nj}}(x),\\
h_{nk}(x) &=1_{[0,1]}(x)+c_n1_{J_{nk}}(x)-a_n 1_{[{1}/{2},1)\setminus J_{nk}}(x),
\end{align*}
and $c_n=(1-\eps_n) \{{(1-p_n)}/{np_n}\}^{1/2}
\{2 \log ({2e}/{p_n})\}^{1/2}$, $a_n=c_n p_n/({1}/{2}-p_n)$, 
$I_{nj}=[(j-1)p_n,jp_n)$, $J_{nk}=[{1}/{2} +(k-1)p_n, {1}/{2} +kp_n)$.
One readily checks that $f_{njk},g_{nj}$ and $h_{nk}$ are densities. 
For each pair $(j,k)$:
\begin{equation*}
\begin{split}
\bar{f}_{njk}(J_{nk}) - \bar{f}_{njk}(I_{nj}) \ & =\ 2c_n \ =\ 2(1-\eps_n) 
\left({\frac{p_n(1-p_n)}{n}}\right)^{1/2} \frac{\sqrt{2 \log ({2e}/{p_n})}}{p_n} \\
& \geq (1-\eps_n) \Bl [ \left\{\frac{F_{njk}(J_{nk})(1-F_{njk}(J_{nk}))}{n}\right\}^{1/2}
\frac{\bigl\{2 \log \bigl({e}/{F_{njk}(J_{nk})}\bigr)\bigr\}^{1/2}}{|J_{nk}|} \\
& \ +\left\{\frac{F_{njk}(I_{nj})(1-F_{njk}(I_{nj}))}{n}\right\}^{1/2}
\frac{\bigl\{2 \log \bigl({e}/{F_{njk}(I_{nj})}\bigr)\bigr\}^{1/2}}{|I_{nj}|} \Br],
\end{split}
\end{equation*}
since $F_{njk}(J_{nk})=(1+c_n)p_n$, $F_{njk}(I_{nj})=(1-c_n)p_n \geq p_n/2$, and the
function $g(t)=\sqrt{t(1-t)}$ is concave for $t \in (0,1)$.
Thus $f_{njik} \in \II_n(1-\eps_n)$. (While $F_{njk}(J_{nk})$ is larger than $p_n$,
one can easily bound it by, say, $2p_n$, and changing the definition of $\II_n(c)$ to this
end will not affect the conclusions of the theorem.) Moreover, $p_n \geq
2\log^2(n) /n$ implies $np_n \geq 2\bigl(\log (e/p_n)\bigr)^2$, hence $\Delta_{\infty} =
\sup_{x \in [0,1]} |g_{nj}(x)-1| =c_n \leq \sqrt{2 \log (2e/p_n)/(np_n)}
\leq \bigl(\log (e/p_n)\bigr)^{-1/2} \leq (\frac{1}{2}\log m_n)^{-1/2}$, while
$\Delta_2^2 = \int_0^1 (g_{nj}-1)^2 =p_n c_n^2 +({1}/{2}-p_n)a_n^2 \leq
2(1+2p_n)(1-\eps_n)^2  \log (2e/p_n)/n \leq 2(1-\eps_n/2) \log (2e/p_n)/n$ since
$p_n \leq \eps_n/4$. Therefore we obtain as in the proof of Theorem~\ref{thmB}:
\be  \label{Dstst}
\lim_{n \ra \infty} \Ex \Bl| \frac{1}{m_n^{1/2}} \sum_{j=1}^{m_n^{1/2}} L_{g_{nj}}-1\Br|\ =\ 0,
\ee
where $L_{g_{nj}}=\prod_{i=1}^n g_{nj}(X_i)$, and the same result holds for 
$L_{h_{nk}}$. Conditional on $N=\#\{i:X_i <{1}/{2}\}$,
$\sum_j L_{g_{nj}}$ and $\sum_k L_{h_{nk}}$ are independent and the $\{X_i:X_i <{1}/{2}\}$
are independent $U(0,{1}/{2})$, hence (\ref{Dstst}) gives
$$
\Ex \Bl( \Bl| \frac{1}{m_n^{1/2}} \sum_{j=1}^{m_n^{1/2}} L_{g_{nj}} -1\Br| \mid N\Br)
\ \ind \Bl(N \in \bigl(\frac{n}{4},\frac{3n}{4}\bigr)\Br) \ \ra\ 0\ \ \ \mbox{a.s.},
$$
and likewise for the average of the $L_{h_{nk}}$. Thus
\be  \label{Dststst}
\begin{split}
&\Ex \Bl| \frac{1}{m_n} \sum_{j,k=1}^{m_n^{1/2}} (L_{g_{nj}} -1)(L_{h_{nk}}-1) \Br|  \\
&\leq \Ex \left( \Ex \Bl( \Bl| \frac{1}{m_n^{1/2}} \sum_{j=1}^{m_n^{1/2}} L_{g_{nj}} -1\Br| 
  \mid N\Br) \Ex \Bl( \Bl| \frac{1}{m_n^{1/2}} \sum_{k=1}^{m_n^{1/2}} L_{h_{nk}} -1\Br|
  \mid N\Br) \ind\Bl(N \in \bigl(\frac{n}{4},\frac{3n}{4}\bigr)\Br) \right) \\
  & \hspace{2cm} + 2^2 \Pr \Bl( N 
  \not\in \bigl(\frac{n}{4},\frac{3n}{4}\bigr)\Br)  \\
& \ra 0 \,,
\end{split}
\ee
by bounded convergence, since $\Ex \bigl(\big|L_{g_{nj}} -1\big| \mid N\bigr) \leq \Ex 
\bigl(L_{g_{nj}}\mid N\bigr) +1=2$. Hence
$$
\Ex \Bl| \frac{1}{m_n} \sum_{j,k=1}^{m_n^{1/2}} L_{njk} -1 \Br| =
\Ex \Bl| \frac{1}{m_n} \sum_{j,k=1}^{m_n^{1/2}} \Bl\{(L_{g_{nj}} -1)(L_{h_{nk}}-1)
+ (L_{g_{nj}} -1)+(L_{h_{nk}}-1) \Br\} \Br|
$$
converges to zero by (\ref{Dstst}) and (\ref{Dststst}), proving (\ref{Dst}).
\end{proof}

\begin{proof}[of Theorem~\ref{thmD2}]
For $k \in \{0,1\}$, let $F \in \II_n(1+2k+\eps_n)$,
so there exist $I_1 <I_2$ for which (\ref{Ist}) holds with $c=1+2k+\eps_n$
and $F(I_i) \geq {\log ^2 (n)}/{n}$, $i=1,2$. It is readily checked that
these lower bounds on $F(I_i)$ imply on the event
$$
\AA_n=\Bl\{  n^{1/2} \frac{|F_n(I_i)-F(I_i)|}{\sqrt{F(I_i)
(1-F(I_i))}} \leq \frac{\eps_n}{2} \sqrt{2 \log \frac{e}{F(I_i)}}
\text{ for } i=1,2\Br\},
$$
it holds, for large $n$ enough, 
\be  \label{Ihel}
(1+\frac{\eps_n}{6}) \left\{\frac{F(I_i)(1-F(I_i))}{n}\right\}^{1/2} 
\frac{\sqrt{2 \log ({e}/{F(I_i)})}}{|I_i|}
\ \geq \ \frac{1}{2} r_n(I_i)\quad  i=1,2, 
\ee
where $r_n(I)$ is defined in Theorem~\ref{thmFeatureInfer}. 
{(For general $F$, we replace $\kappa_n(\alpha)$ by $\kappa_n^*(\alpha)$)}.

Let $H \in \tilde{C}_n(\alpha)$ with $h$ being constant on $I_1$ and $I_2$,
and suppose
\be  \label{Iass}
\bar{h}(I_1)-\bar{h}(I_2)\ \geq \ -k \Bl(r_n(I_1)+r_n(I_2)\Br)
\ee
holds. Then on $\AA_n$:
\be \nn
\begin{split}
& \frac{H(I_1)-F_n(I_1)}{|I_1|} + \frac{F_n(I_2)-H(I_2)}{|I_2|} \\
& \geq \bar{h}(I_1) -\bar{h}(I_2) +\bar{f}(I_2) -\bar{f}(I_1) -\frac{\eps_n}{2}
\sum_{i=1}^2 \left\{\frac{F(I_i)(1-F(I_i))}{n}\right\}^{1/2} \frac{\sqrt{2 \log \bigl({e}{F(I_i)}\bigr)}}{|I_i|} \\
& > -k(r_n(I_1)+r_n(I_2)) + (1+2k+\frac{\eps_n}{2}) \sum_{i=1}^2 
  \left\{\frac{F(I_i)(1-F(I_i))}{n}\right\}^{1/2} \frac{\sqrt{2 \log \bigl({e}/{F(I_i)}\bigr)}}{|I_i|} \\
  & \qquad \qquad  \mbox{ by \eqref{Iass} and \eqref{Ist}} \\
& \geq \frac{1}{2} (r_n(I_1)+r_n(I_2)) \ \ \text{eventually}\ \ \mbox{ by (\ref{Ihel}),}
\end{split}
\ee
which gives the contradiction  $h \not\in \tilde{C}_n(\alpha)$ by Lemma~\ref{quadapprox}~\eqref{eq:quadProxC}.
Therefore (\ref{Iass}) can only hold on $\AA_n^c$.
(As in the proof of Theorem~\ref{thmA} we assumed $\JJ=\{$ all real intervals $\}$ and 
refer to \cite{RivWal13} for the technical work that can be used to show
 that the conclusion also obtains
with the approximating set used in \S\ref{confset}.)

If $h$ is nonincreasing on conv$(I_1 \cup I_2)$, then (\ref{Iass}) holds with $k=0$
and so for $F \in \II_n(1+\eps_n)$:
\be \nn
\begin{split}
\Pr_F \Bl( & \mbox{there exists $H \in \tilde{C}_n(\alp)$ whose density is 
  constant on $I_1$} \\
  & \hspace{2cm} \mbox{and $I_2$ and nonincreasing on conv$(I_1 \cup I_2)$} \Br) \\
& \ \leq \Pr_F(\AA_n^c ) \ \leq \ \frac{8}{2\eps_n^2  \log ({e}/{p_n})} \ \ra 0 \ \ \mbox{ uniformly in $F$}\,,
\end{split}
\ee
by Chebychev's inequality, proving the first part. The second part follows analogously since
(\ref{signpattern}) together with $\bar{f}(I_2) \leq \bar{f}(I_1)$ implies that (\ref{Iass})
holds with $k=1$. 
\end{proof}

\begin{proof}[of Theorem~\ref{thmHistDensity}]
It follows immediately from Proposition~\ref{propHistDensity} below and the fact that  $\kappa_n(\alpha) \le \sqrt{2\log(C/\alpha)}$ for some constant $C$, see~\cite{RivWal13}.
\end{proof}

\begin{proposition}\label{propHistDensity}
Under the same notation as Theorem \ref{thmHistDensity},  it controls overestimating the number of bins uniformly over all $F$'s
\[
 \Pr_F \Bl(\nbin (H) > \nbin(F)\Br) \le \alpha;
\]
and it controls underestimating the number of bins, for  $n \ge16\log n / (\lambda\underline\theta)$,
\[
\Pr_F \Bl(\nbin(H) < \nbin(F)\Br) \le 4K\left(2\exp\left(-\frac{1}{128}n\lambda^2\underline\theta^2\right) + \exp\left(-\frac{1}{72}n\lambda^2\left(\frac{\Delta}{2}-\frac{12\delta_n}{\lambda n^{1/2}}\right)_+^2\right)\right),
\]
with $\delta_n = \sqrt{2\log {(256 e)}/{(\lambda^2\underline\theta^2)}} + \kappa_n(\alpha);$
Moreover, it controls the number of modes and troughs, for $n  \ge 32\log n / (\lambda\underline\theta)$, with $\tilde{\delta}_n = \sqrt{2\log {(1024 e)}{(\lambda^2\underline\theta^2)}} + \kappa_n(\alpha)$,
\begin{multline*}
\Pr_F \Bl(h \text{ and }f \text{ have the same number of modes and troughs}\Br) \\
\ge 1 - \alpha -  12K \left(2\exp\left(-\frac{1}{512}n\lambda^2\underline\theta^2\right) + \exp\left(-\frac{1}{288}n\lambda^2\left(\frac{\Delta}{2}-\frac{24\tilde{\delta}_n}{\lambda n^{1/2}}\right)_+^2\right)\right).
\end{multline*}
\end{proposition}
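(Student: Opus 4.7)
The plan is to prove parts (i), (ii), (iii) of Proposition~\ref{propHistDensity} in that order, with the difficulty concentrated in (ii).

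For part (i), the idea is that the true cdf $F$ is itself a histogram cdf with $\nbin(F)=K+1$ bins, and on the event $\{F\in C_n(\alp)\}$ we automatically have $F\in\tilde{C}_n(\alp)$ since the constraint in \eqref{eq:tldCn} is evaluated on a subset of the intervals used in \eqref{eq:Cn}. Because $\Pr_F(F\in C_n(\alp))\ge 1-\alp$ by construction of the quantile $\kappa_n(\alp)$, on an event of probability at least $1-\alp$ the essential histogram $H$, being the minimizer of $\nbin(\cdot)$ over $\tilde{C}_n(\alp)$, must satisfy $\nbin(H)\le\nbin(F)$. This closes (i).

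For part (ii), suppose $\nbin(H)<\nbin(F)$. Then some bin of $H$ straddles a true change-point $\tau_k$, i.e.\ $H$ is constant on an open neighborhood of $\tau_k$ in which $f$ jumps by at least $\Delta$. I would first show that, with high probability, for every such $k$ there exist two intervals $I_k^-, I_k^+\in\JJ$ lying entirely in $(\tau_{k-1},\tau_k]$ and $(\tau_k,\tau_{k+1}]$ respectively, each of length $\asymp \lambda$, with $F(I_k^\pm)\asymp \lambda\underline\theta$ and $F_n(I_k^\pm)$ within a constant factor of that. This is where the hypotheses $\lambda\ge\gamma/\underline\theta$ and $n\ge 16\log n/(\lambda\underline\theta)$ enter: they guarantee that the dyadic grid $\JJ(\ell)$ has a scale fine enough to fit strictly on one side of each $\tau_k$, and Bernstein's/Bennett's inequality (applied to $F_n-F$ on these intervals) provides the concentration with exponent $\exp(-cn\lambda^2\underline\theta^2)$, which is exactly the first term in the stated bound. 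Next, since $H$ is constant on $\mathrm{conv}(I_k^-\cup I_k^+)$, its value $\bar h$ there cannot simultaneously approximate both $c_{k-1}$ and $c_k$; one of $|H(I_k^\pm)/|I_k^\pm|-c|\ge\Delta/2$ must hold. Plugging this into the multiscale constraint defining $\tilde C_n(\alp)$ via Lemma~\ref{quadapprox}(c), the LR deviation required to pass the test is of order $\delta_n/\sqrt{n}$, while the actual deviation is at least $\Delta/2-O(\delta_n/(\lambda\sqrt n))$; Bennett's inequality on $F_n(I_k^\pm)-F(I_k^\pm)$ then gives the second exponential term. A union bound over the $K$ change-points finishes (ii). The main obstacle is item (2): locating intervals in $\JJ$ of the right length entirely on one side of $\tau_k$ with controlled empirical content, because $\JJ$ is indexed by order statistics with coarse spacing $d_\ell$, so one must carefully track how the dyadic level $\ell$ with $m_\ell\asymp n\lambda\underline\theta$ matches the scale $\lambda$.

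For part (iii), I would work on the intersection of the events $\{F\in\tilde C_n(\alp)\}$ (probability $\ge 1-\alpha$) and $\{\nbin(H)=\nbin(F)\}$ (probability bounded via part (ii)). On the first event, Theorem~\ref{thmFeatureInfer} gives $|\bar h(I)-\bar f(I)|\le r_n(I)$ simultaneously on every interval $I\in\JJ$ on which $h$ is constant, and with $F(I)\asymp \lambda\underline\theta$ the radius $r_n(I)$ is of order $\tilde\delta_n/(\lambda\sqrt n)$. On the second event, the bins of $H$ correspond one-to-one with those of $F$ in order, with the $k$-th breakpoint of $H$ lying within a small empirical window around $\tau_k$; applying the simultaneous confidence bound on a sub-interval of each true bin where $h$ coincides with a constant gives $|\bar h - c_k|<\Delta/2$ for each $k$, provided $\Delta/2-O(\tilde\delta_n/(\lambda\sqrt n))>0$. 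Consequently, the sign pattern of successive differences of heights in $h$ matches that in $f$, so the modes and troughs coincide. The explicit constants $1/512$, $1/288$ and the factor $12K$ come from repeating the Bennett argument of part (ii) with a slightly smaller scale (to leave room for the $\Delta/2$ margin) and from the union bound over the $K+1$ bins and their two halves.
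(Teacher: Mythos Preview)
Your outline for (i) is correct and matches the paper's proof.

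There is a genuine gap in your plan for (ii). From $\nbin(H)<\nbin(F)$ you correctly deduce that some bin of $H$ contains a true change-point $\tau_k$, but then you assert that ``$H$ is constant on $\mathrm{conv}(I_k^-\cup I_k^+)$'' where $I_k^\pm$ are intervals of length $\asymp\lambda$ on either side of $\tau_k$. This does not follow: the $H$-bin containing $\tau_k$ may extend only by an arbitrarily small amount past $\tau_k$ on one side, so there is no room for an interval of length $\asymp\lambda$ there. The paper closes this gap with a pigeonhole argument on midpoints: writing $m_k$ for the midpoint of the true bin $(\tau_k,\tau_{k+1}]$, the $K$ intervals $(m_{k-1},m_k]$ are disjoint, so if $h$ had a change-point in each of them it would have at least $K$ change-points and hence $\nbin(H)\ge K+1=\nbin(F)$. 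Therefore $\nbin(H)<\nbin(F)$ forces $h$ to be constant on some $(m_{k-1},m_k]$, and this interval has length $\ge\lambda/2$ on \emph{each} side of $\tau_k$. Only then can one place the half-intervals $I_{k-1}^+=(m_{k-1},\tau_k]$ and $I_k^-=(\tau_k,m_k]$, invoke the approximation property of $\JJ$ to extract $J\subset I$ with $J\in\JJ$ and $F_n(J)\ge\tfrac13 F_n(I)$, and run the Hoeffding bounds. Without the midpoint step your argument cannot guarantee that the multiscale constraint is tested on an interval where $h$ is actually constant.

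For (iii) your route via Theorem~\ref{thmFeatureInfer} is different from the paper's (which simply reruns the part-(ii) machinery on quarter-intervals $I_{k,1}^\pm,I_{k,2}^\pm$ of length $\ge\lambda/4$), but it runs into the same obstacle: on the event $\{\nbin(H)=\nbin(F)\}$ you know that the $k$-th change-point $\sigma_k$ of $H$ lies in $(m_{k-1},m_k]$, but the interval on which both $h$ and $f$ are simultaneously constant, namely $(\sigma_{k-1},\sigma_k]\cap(\tau_{k-1},\tau_k]$, can be arbitrarily short (both $\sigma_{k-1}$ and $\sigma_k$ may be close to $m_{k-1}$). The paper circumvents this by noting that wherever $\sigma_k$ falls among the four quarters of $(m_{k-1},m_k]$, $h$ is constant on at least three of them, and on at least one of those $|\bar h-\bar f|\ge\Delta/2$ by the triangle inequality; each quarter has length $\ge\lambda/4$, which is what produces the constants $1/512$, $1/288$, and $\tilde\delta_n$.
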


\begin{remark} Note that the assertions in Proposition~\ref{propHistDensity} also hold for sequences of $f = f_n$ with $\underline\theta = \underline\theta_n$, $\lambda = \lambda_n$, $\Delta = \Delta_n$, $K =K_n$, and $\alpha = \alpha_n$.
\end{remark}

\begin{proof}[of Proposition \ref{propHistDensity}]
For the first part, by the definition of the essential histogram in~\eqref{eq:adj_ess_hist}, we have
\[
\Pr_F\Bl(\nbin(H) \le \nbin(F)\Br) \ge \Pr_F\Bl(F \in \tilde{C}_n(\alpha)\Br) \ge 1-\alpha.
\]
For the second and the third parts, we {use arguments similar to}~\citet[Theorem 7.10]{FrMuSi14}, but with notable differences due to the use of the reduced system $\JJ$. We will frequently use the following inequality, which comes as an application of the Hoeffding's inequality, 
\begin{equation}\label{eqEmpF}
\Pr_F\Bl(\abs{F(I) - F_n(I)} \ge x \Br) \le 2\exp(-2n x^2).
\end{equation}
The detail is as follows: For the second part, let $m_k$ be the mid-point of $I_k$, $I_k^{-} = I_k \cap (-\infty, m_k]$, and $I_k^{+} = I_k \cap (m_k, \infty)$. For a fixed $k$, we have
\begin{align*}
&\Pr_F\Bl(h \text{ is constant on } (m_{k-1}, m_k]\Br) \\
 = \ &\Pr_F\Bl(h \equiv c \ge \frac{c_{k-1}+c_k}{2} \text{ on } (m_{k-1}, m_k]\text{ for some constant } c \Br) \\
& {}\qquad + \Pr_F\Bl(h \equiv c < \frac{c_{k-1}+c_k}{2} \text{ on } (m_{k-1}, m_k]\text{ for some constant } c \Br)  \\
 \le\ & \Pr_F\Bl(\abs{\bar{h}(I) - \bar{f}(I)} \ge \frac{\Delta}{2},\, h \text{ is constant on } I \equiv I_{k-1}^+\Br)  \\
& {}\qquad + \Pr_F\Bl(\abs{\bar{h}(I) - \bar{f}(I)} \ge \frac{\Delta}{2},\, h \text{ is constant on } I \equiv I_{k}^-\Br). 
\end{align*}
By symmetry we only need to consider the first term in the r.h.s.~of the above equation, where $I = I_{k-1}^+$. By the construction of $\JJ$ in~\eqref{eqJJ}, it holds that for any $I$ with $F_n(I) \ge 6\log (n) / n$ there is an interval $J \subseteq I$ and $J \in \JJ$ such that $F_n(J) \ge F_n(I)/3$. Conditioned on $\abs{F_n (I) - F(I)} \le  \lambda \underline\theta/16$ and $\abs{F_n (J) - F(J)} \le  \lambda \underline\theta/16$, we have for $n \ge 16\log n / (\lambda\underline\theta)$
\[
F(J) \ \ge \ F_n(J) - \frac{1}{16}\lambda\underline\theta \ \ge \ \frac{1}{3}F_n(I) - \frac{1}{16}\lambda\underline\theta \ \ge \ \frac{1}{3}F(I) - \frac{1}{12}\lambda\underline\theta \ \ge \ \frac{1}{6}F(I),  
\]
which implies $\abs{J} \ge \abs{I}/6 \ge \lambda/12$. Thus,  for $n \ge 16\log n/(\lambda\underline\theta)$
\begin{align*}
&\Pr_F\Bl(\abs{\bar{h}(I) - \bar{f}(I)} \ge \frac{\Delta}{2},\, h \text{ is constant on } I \equiv I_{k-1}^+\Br) \\
\le & \,\Pr_F\Bl(\abs{F_n (I) - F(I)} \ge \frac{1}{16} \lambda \underline\theta\Br) + \Pr_F\Bl(\abs{F_n (J) - F(J)} \ge \frac{1}{16} \lambda \underline\theta\Br) \\
& {}\qquad+ \Pr_F\Bl(\abs{\bar{h}(J) - \bar{f}(J)} \ge \frac{\Delta}{2},\, h\text{ is constant on }J, \, \abs{J} \ge \frac{1}{12}\lambda, \abs{F_n (J) - F(J)} \le \frac{1}{16} \lambda \underline\theta\Br) \\
\le & \, 2\exp\left(-\frac{1}{128}n\lambda^2\underline\theta^2\right) + 2\exp\left(-\frac{1}{128}n\lambda^2\underline\theta^2\right)  \\
& {}\qquad+  \Pr_F\Bl(\abs{\bar{h}(J) - \bar{f}(J)} \ge \frac{\Delta}{2},\, h\text{ is constant on }J, \, \abs{J} \ge \frac{1}{12}\lambda,\, \abs{F_n (J) - F(J)} \le \frac{1}{16} \lambda \underline\theta\Br)  \\
& {}\qquad \qquad\text{by~\eqref{eqEmpF}} \\
\le &\, 4\exp\left(-\frac{1}{128}n\lambda^2\underline\theta^2\right)  + \Pr_F\Bl(\abs{\bar{h}(J) - \bar{f}(J)} \ge \frac{\Delta}{2} - \frac{12\delta_n}{\lambda n^{1/2}}\Br)\qquad\text{by Lemma~\ref{quadapprox}~\eqref{eq:quadProxA}} \\
\le & \, 4\exp\left(-\frac{1}{128}n\lambda^2\underline\theta^2\right) + 2 \exp\left(-\frac{1}{72}n\lambda^2\left(\frac{\Delta}{2}-\frac{12\delta_n}{\lambda n^{1/2}}\right)_+^2\right) \qquad\text{by~\eqref{eqEmpF}}.
\end{align*}
The same bound holds for $\Pr_F\bigl(\abs{\bar{h}(I) - \bar{f}(I)} \ge {\Delta}/{2},\, h \text{ is constant on } I \equiv I_{k}^-\bigr)$ due to symmetry. Therefore, we have  for $n \ge 16\log n/(\lambda\underline\theta)$
\begin{align*}
\Pr_F\Bl(\nbin(H) < \nbin (F)\Br) &\le \Pr_F\Bl(h \text{ is constant on } (m_{k-1}, m_k] \text{ for some } k\Br) \\
&\le 4K\left(2\exp\left(-\frac{1}{128}n\lambda^2\underline\theta^2\right) + \exp\left(-\frac{1}{72}n\lambda^2\left(\frac{\Delta}{2}-\frac{12\delta_n}{\lambda n^{1/2}}\right)_+^2\right)\right).
\end{align*}
For the third part, we further divide $I^+_k$ (or $I^-_k$) into two subintervals $I^+_{k,1}$, $I^+_{k,2}$ (or $I^-_{k,1}$, $I^-_{k,2}$) of equal lengths. For any fixed $k$, it holds that 
\be \nn
\begin{split}
\Pr_F\Bl( & h \text{ has exactly one jump, but has a different trend from } f \text{ on } (m_{k-1}, m_k]\Br)\\
 & \le \Pr_F\Bl(\abs{\bar{h}(I) - \bar{f}(I)} \ge \frac{\Delta}{2}, \text{ and } h \text{ is constant on } I \equiv I^+_{k,1}  \Br)\\ 
&+ \Pr_F\Bl(\abs{\bar{h}(I) - \bar{f}(I)} \ge \frac{\Delta}{2}, \text{ and } h \text{ is constant on } I \equiv I^+_{k,2}  \Br)\\
& + \Pr_F\Bl(\abs{\bar{h}(I) - \bar{f}(I)} \ge \frac{\Delta}{2}, \text{ and } h \text{ is constant on } I \equiv I^-_{k,1}  \Br) \\
&+ \Pr_F\Bl(\abs{\bar{h}(I) - \bar{f}(I)} \ge \frac{\Delta}{2}, \text{ and } h \text{ is constant on } I \equiv I^-_{k,2}  \Br).\\ 
\end{split}
\ee

Each term above can be bounded in a similar way as in the second part, which leads to
\begin{multline*}
\Pr_F\Bl(h \text{ has exactly one jump, but has a different trend from } f \text{ on } (m_{k-1}, m_k]\Br)\\ \le
16\exp\left(-\frac{1}{512}n\lambda^2\underline\theta^2\right) + 8 \exp\left(-\frac{1}{288}n\lambda^2\left(\frac{\Delta}{2}-\frac{24\tilde{\delta}_n}{\lambda n^{1/2}}\right)_+^2\right)\qquad\text{ for } n \ge \frac{32\log n}{\lambda\underline\theta}.
\end{multline*} 
It follows from (i) and (ii) that for  for $n \ge 16\log n / (\lambda\underline\theta)$
\begin{multline*}
\Pr_F \Bl( h \text{ has exactly one jump in each }(m_{k-1}, m_k]\Br) \ge 1-\alpha \\
- 4K\left(2\exp\left(-\frac{1}{128}n\lambda^2\underline\theta^2\right) + \exp\left(-\frac{1}{72}n\lambda^2\left(\frac{\Delta}{2}-\frac{12\delta_n}{\lambda n^{1/2}}\right)_+^2\right)\right).
\end{multline*}
Thus, for $n \ge 32\log n / (\lambda\underline\theta)$
\begin{align*}
&\, \Pr_F \Bl(h \text{ and }f \text{ have the same number of modes and troughs}\Br) \\
\ge & \, \Pr_F \Bl( h \text{ has exactly one jump, and has the same trend as $f$, on each }(m_{k-1}, m_k]\Br) \\
\ge & \,  1 - \alpha -  12K \left(2\exp\left(-\frac{1}{512}n\lambda^2\underline\theta^2\right) + \exp\left(-\frac{1}{288}n\lambda^2\left(\frac{\Delta}{2}-\frac{24\tilde{\delta}_n}{\lambda n^{1/2}}\right)_+^2\right)\right).
\end{align*}
$ $ \end{proof}

\begin{proof}[of Theorem~\ref{thmB}]
Using the probability integral transformation we may assume
$F=U[0,1]$. For $j=1,\ldots,m_n=\lfloor 1/p_n \rfloor$ define the densities 
$f_{nj}(x)=(1+c_n) \ind_{I_{nj}}(x) + (1-a_n)\ind_{[0,1]\setminus I_{nj}}(x)$, where
$I_{nj}=[(j-1)p_n, jp_n)$, $a_n=c_n p_n/(1-p_n)$ and 
$c_n= \sqrt{{(1-p_n)}/{np_n}} (1-\eps_n) \sqrt{2 \log (e/p_n)}$. 
Then $d_{p_n}(F,F_{nj}) = |F_{nj}(I_{nj})-F(I_{nj})|
( p_n(1-p_n))^{-1/2} = (1-\eps_n) \sqrt{2 \log (e/p_n)/n}$. 
The claim of the theorem will follow as in the proof of Theorem~4.1(b)
in \cite{DueWal08} once we show that
\be  \label{6stst}
\lim_{n \ra \infty} \Ex_F \Bl| \frac{1}{m_n} \sum_{j=1}^{m_n} L_{nj} -1 \Br| \ =\ 0,
\ee
where $L_{nj}=\prod_{i=1}^n f_{nj}(X_i)$. Since the sets $\{f_{nj} \neq 1\}$
are not disjoint, Lemma~7.4 of \cite{DueWal08} is not applicable to prove (\ref{6stst})
and we have to account for the covariances of the $L_{nj}$. For $i \neq j$ we obtain
$\int_0^1 f_{ni}(x) f_{nj}(x) dx =1 - \bigl({p_nc_n}/{(1-p_n)}\bigr)^2$,
hence $\Ex_F(L_{ni}L_{nj}) =\left\{1 - \bigl({p_nc_n}/{(1-p_n)}\bigr)^2\right\}^n < 1$.
Using $\Ex_F L_{nj}=1$ and H\"{o}lder's inequality gives for any $\eps >0$
\begin{align*}
& \Ex_F \Bl| \frac{1}{m_n} \sum_{j=1}^{m_n} L_{nj} -1 \Br| \\
& \leq \left\{ {\rm var}_F \Bl( \frac{1}{m_n} \sum_{j=1}^{m_n} L_{nj} \ind(L_{nj} \leq \eps m_n)\Br)\right\}^{1/2}
  +\frac{2}{m_n} \sum_{j=1}^{m_n} \Ex_F\bigl( L_{nj}\ind(L_{nj} > \eps m_n)\bigr) \\
& \leq \left\{\frac{1}{m_n} \Ex_F \Bl(L_{n1}^2 \ind(L_{n1} \leq \eps m_n)\Br)
  + \frac{1}{m_n^2} \sum_{i<j} {\rm cov} \Bl(L_{ni}\ind(L_{ni} \leq \eps m_n), L_{nj}\ind(L_{nj} 
  \leq \eps m_n)\Br)\right\}^{1/2}\\
&{}\qquad \ \ + 2\, \Ex_F\bigl( L_{n1} \ind(L_{n1} > \eps m_n)\bigr)\\
& \leq \left\{\frac{1}{m_n} \Ex_F (\eps m_n L_{n1}) + \Ex_F L_{n1}L_{n2}
  -\Bl( \Ex_F L_{n1} \ind(L_{n1} \leq \eps m_n)\Br)^2\right\}^{1/2} + 2\,\Ex_F \bigl(L_{n1} \ind(L_{n1} > \eps m_n)\bigr)\\
& \leq \left\{\eps +1 -\Bl(1-\Ex_F L_{n1} \ind(L_{n1} > \eps m_n)\Br)^2\right\}^{1/2} 
  + 2\, \Ex_F L_{n1} \ind(L_{n1} > \eps m_n)\,.
\end{align*}

Thus (\ref{6stst}) follows by showing 
\be  \label{7ststst}
\lim_{n \ra \infty} \Ex_F\bigl( L_{n1} \ind(L_{n1} > \eps m_n)\bigr)\ =\ 0\,.
\ee

To this end, observe that $p_n \geq
\log^2n /n$ implies $np_n \geq (\log e/p_n)^2$, hence 
$$
\Delta_{\infty}=\sup_{x \in [0,1]} | f_{nj}(x)-1| = c_n \le \left(\frac{2\log(e/p_n)}{n p_n}\right)^{1/2}
\le  \left(\frac{2}{\log (e/p_n)}\right)^{1/2} \leq  \left(\frac{2}{\log m_n}\right)^{1/2}\,.
$$
Further, $\Delta_2^2 = \int_0^1 (f_{nj}(x)-1)^2dx =p_n c_n^2 +(1-p_n)a_n^2=
2(1-\eps_n)^2  \log (e/p_n)/n$, hence
\begin{equation*}
\begin{split}
\sqrt{\log m_n} \Bl( 1-\frac{n \Delta_2^2}{2 \log m_n}\Bl) \ & \geq \
\sqrt{\log m_n} \Bl(1-(1-\eps_n)\frac{\log (e/p_n)}{\log \lfloor 1/p_n \rfloor}\Br) \\
& \geq \ \sqrt{\log m_n} \Bl(1-(1-\eps_n)(1-\frac{3}{\log p_n})\Br) \\
& =\ \sqrt{\log m_n} \eps_n (1+o(1)) +o(1) \ \ra \ \infty\,,
\end{split}
\end{equation*}
by the assumption on $\eps_n$. It is shown in the proof of Lemma~7.4
in \cite{DueWal08}  that (\ref{7ststst}) follows from these properties of
$\Delta_{\infty}$ and $\Delta_2^2$. 
\end{proof}

\section{Computational details}\label{sp:cmp}

\subsection{{Computation of the threshold $\kappa_n(\alpha)$}}\label{ss:qnt}
{
Let $F$ be an arbitrary distribution function, i.e., right continuous with limits from left. We define its standard left (continuous) inverse as $F^{-1}(t) = \inf\{x : F(x) \ge t\}$.  Let $Z_1,\ldots, Z_n$ be independent and identically distributed uniform random variables on $[0,1]$, and denote their common distribution function by $U$.  Note that $X_i = F^{-1}(Z_i)$ are independent, and identically distributed according to $F$.  By $U_n$ and $F_n$ we denote the empirical distribution functions of $Z_1, \ldots, Z_n$ and $X_1,\ldots, X_n$, respectively. For an interval $I = (j, k]$, we define $Z_I = (Z_{(j)}, Z_{(k)}]$ and similarly $X_I = (X_{(j)}, X_{(k)}]$. Then, $T_n$ in~\eqref{eq:msStat} can be written as 
$$
T_n \ =\  \max_{I \in \JJ_0 \cap (\mathcal{D} \times \mathcal{D})} \Bl\{\sqrt{2 \lLR \bigl(F(X_I),F_n(X_I)\bigr)} -
\pen\bigl(F_n(X_I)\bigr)\Br\}
$$
where $\mathcal{D} =\{i\, :\, X_{(i)}\neq X_{(i+1)}\}$ as in~\eqref{eqJJ}, and $\JJ_0 $  is a collection of intervals
\begin{align*}
\JJ_0 &= \bigcup_{l =2}^{l_{max}} \JJ_0(l),\;\;\; \mbox{ where }
\;l_{max}=\Bl\lfloor \log_2 \tfrac{n}{\log n}\Br\rfloor \;\;
\mbox{ and}\\
\JJ_0(l) &= \Bl\{(j,k]: j,k \in \{1+i d_{l},
i\in \N_0 \}\ \mbox{ and } m_{l}<k-j\leq 2m_{l}\Br\},\\
&\;\;\;\;\mbox{ where } m_{l}=n2^{-l},\; d_{l}=
\Bl\lceil \tfrac{m_{l}}{6 {l}^{1/2}}\Br\rceil.
\end{align*}
}

{In the case that $F$ is continuous, it holds that $F(X_I) = U(Z_I)$ and $F_n(X_I) = U_n(Z_I)$ for any $I = (j, k]$. Then $T_n$ can be further written as
\begin{equation}\label{eq:qntC}
T_n \ =\  \max_{I \in \JJ_0} \Bl\{\sqrt{2 \lLR \bigl(U(Z_I),U_n(Z_I)\bigr)} -
\pen\bigl(U_n(Z_I)\bigr)\Br\},
\end{equation}
and is thus independent of $F$. This makes it possible to estimate the distribution, as well as the quantile $\kappa_n(\alpha)$, of $T_n$ via Monte Carlo simulations.}

Consider now that $F$ is a general distribution function. For $I = (j,k] \in \JJ_0 \cap (\mathcal{D} \times \mathcal{D})$, it still holds that $F_n(X_I) = U_n(Z_I) = (k-j)/n$, but $F(X_I) = U(Z_I)$ may no longer be valid. However, it is possible to find a universal distribution which stochastically dominates the distribution of $T_n$. More precisely, we introduce the notation $I_+ = (j, k+1]$ and $I_- = (j+1, k]$, for any $I = (j, k]$, and define 
\begin{multline}\label{eq:qntG}
T_n^*\ = \  \max_{I \in \JJ_0} \Bl\{\Bigl(2 \max\bigl\{\lLR \bigl(U(Z_{I_+}),U_n(Z_I)\bigr), \lLR \bigl(U(Z_{I_-}),U_n(Z_I)\bigr)\bigr\}\Bigr)^{1/2} \\
- \pen\bigl(U_n(Z_I)\bigr)\Br\},
\end{multline}
and $\kappa_n^*(\alpha)$ as its ($1-\alpha$)-quantile, which can be determined by Monte Carlo simulations.
\begin{lemma}\label{le:ks}
It holds that $\kappa_n(\alpha) \le \kappa_n^*(\alpha)$ and $\sup_n\kappa^*_n(\alpha) < \infty$, for every $\alpha \in(0,1)$.
\end{lemma}
\begin{proof}
We claim that $i \in \mathcal{D}$ implies $Z_{(i)} \le F(X_{(i)}) < Z_{(i+1)}$. In fact, since $F(F^{-1}(t)) \ge t$ for all $t$, we have $Z_{(i)} \le F(F^{-1}(Z_{(i)})) = F(X_{(i)})$. If $F(X_{(i)}) \ge Z_{(i+1)}$, then, by the definition of $F^{-1}$, we have $X_{(i)} \ge F^{-1}(Z_{(i+1)}) = X_{(i+1)}$, which contradicts with $i \in \mathcal{D}$. Thus, $F(X_{(i)}) < Z_{(i+1)}$.
\newline
\indent For an arbitrary interval $I = (j, k] \in \JJ_0 \cap (\mathcal{D} \times \mathcal{D})$, it then holds that 
$$
U(Z_{I_-})=Z_{(k)} - Z_{(j+1)}<F(X_I) = F(X_{(k)}) - F(X_{(j)}) < Z_{(k+1)} - Z_{(j)} = U(Z_{I_+}).
$$
As $\lLR(x, F_n(X_I))$ is decreasing for $x \le F_n(X_I)$ and increasing for $x \ge F_n(X_I)$, we have
$$
\lLR(F(X_I), F_n(X_I)) \le \max\left\{\lLR(U(Z_{I_+}), U_n(Z_I)), \, \lLR(U(Z_{I_-}), U_n(Z_I))\right\}.
$$
This proves the first part, i.e., $\kappa_n(\alpha) \le \kappa_n^*(\alpha)$. \\
\indent The second part, namely $\kappa_n^*(\alpha) = O(1)$, can be proven in a similar way as \citet[Theorem 1]{RivWal13}, by noticing that $|U_n(Z_{I_+}) -U_n(Z_I)| = |U_n(Z_{I_-}) -U_n(Z_I)| = {1}/{n}.$ 
\end{proof}

\begin{figure}[!h]
\centering
\begin{tabular}{lll}
(a) & (b) & (c)\\
\includegraphics[width=0.3\textwidth]{./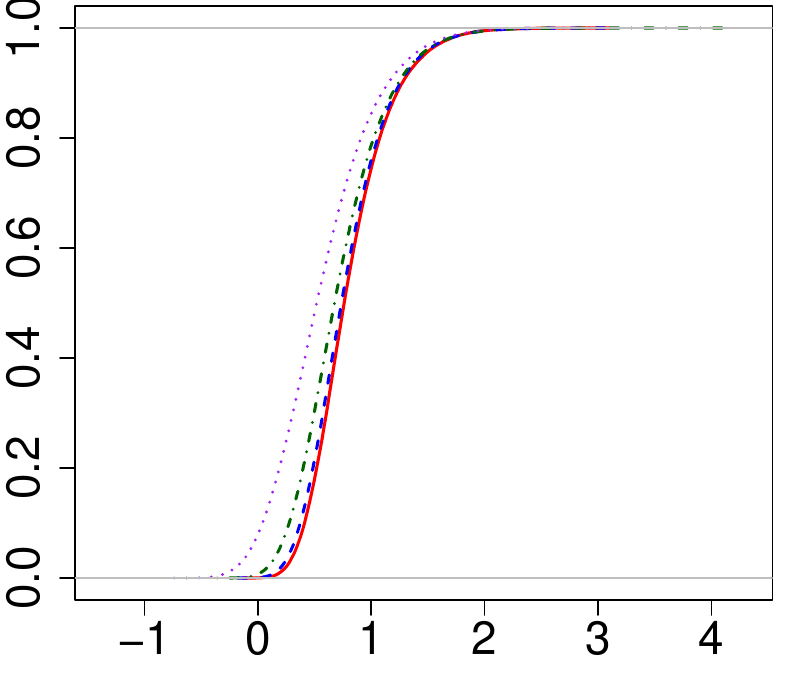} &
\includegraphics[width=0.3\textwidth]{./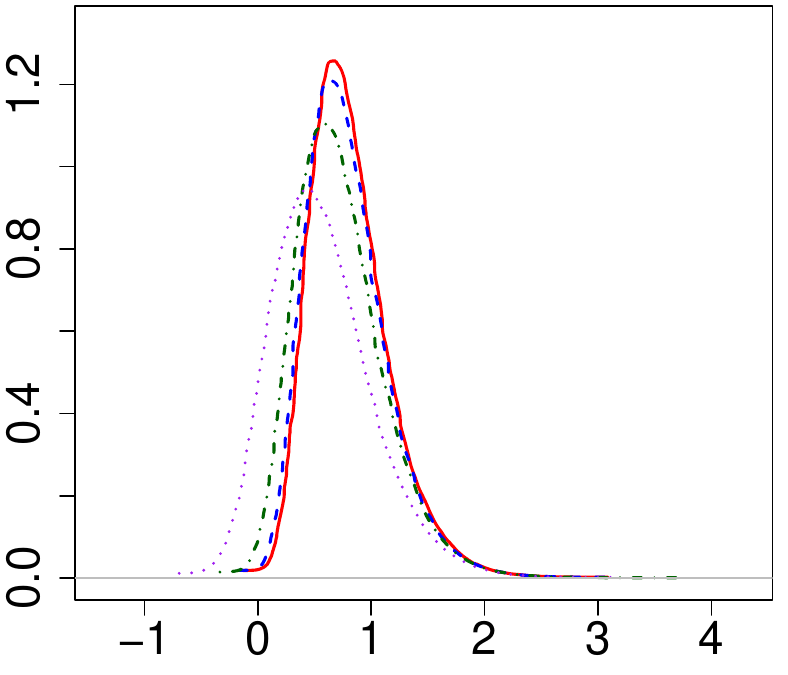} &
\includegraphics[width=0.3\textwidth]{./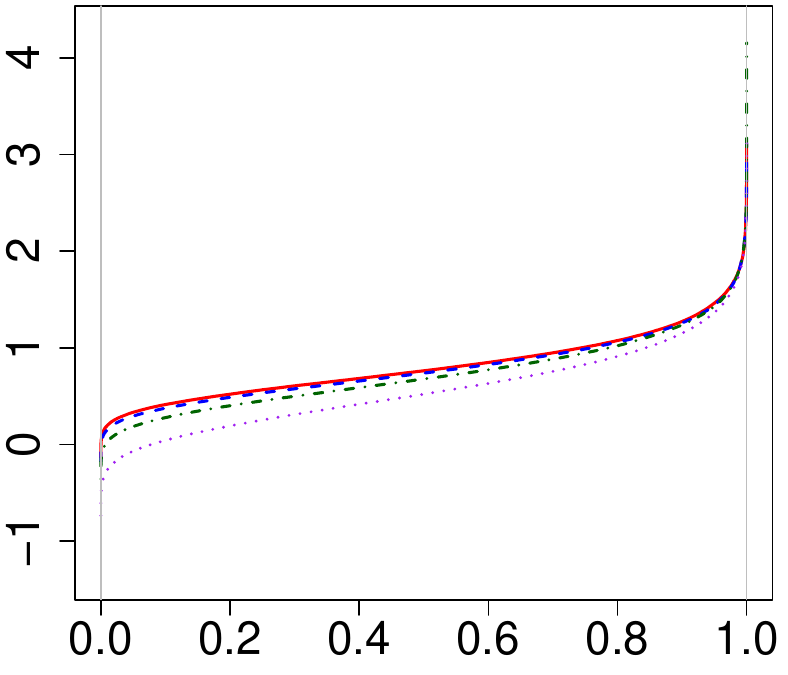} \\
 (d) & (e) & (f)\\
\includegraphics[width=0.3\textwidth]{./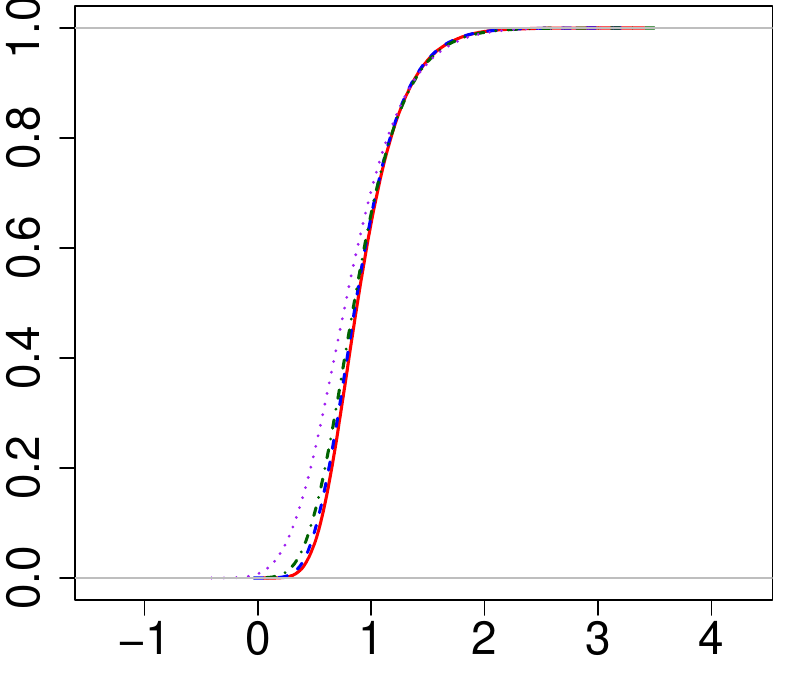} &
\includegraphics[width=0.3\textwidth]{./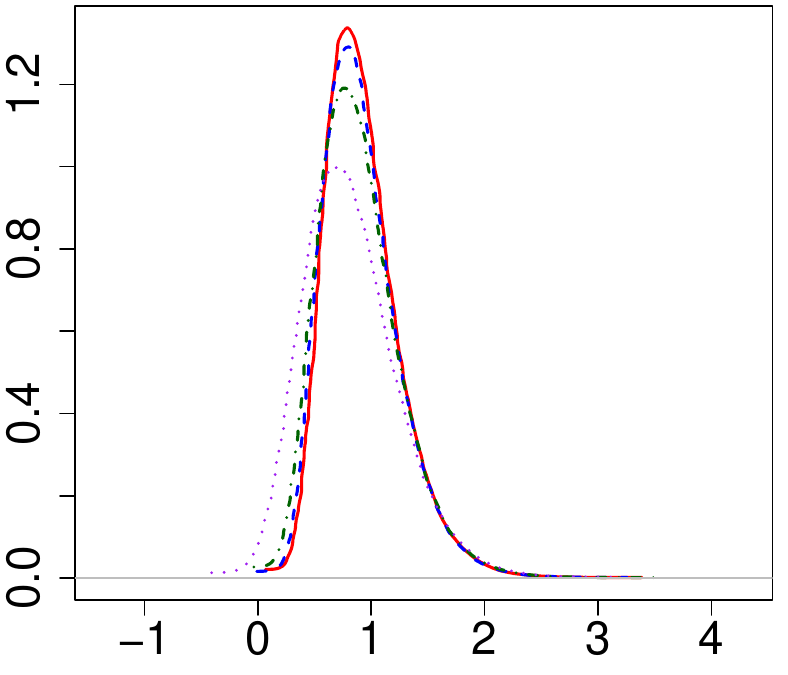} &
\includegraphics[width=0.3\textwidth]{./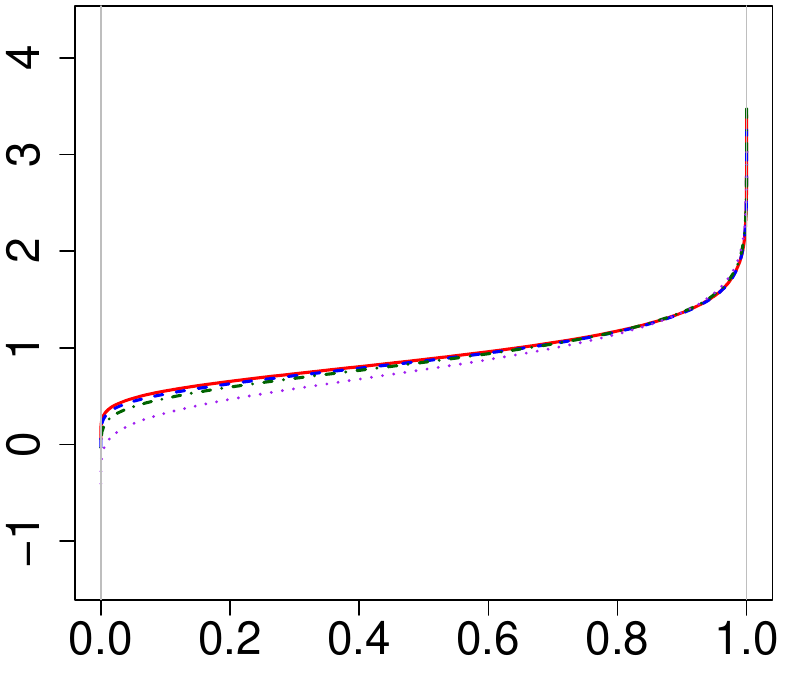} 
\end{tabular}
\caption{
{The (a) distribution function, (b) density and (c) quantile function of $T_n$ in~\eqref{eq:qntC} (equivalently, $T_n$ in \eqref{eq:msStat} under any continuous $F$); and the (d) distribution function, (e) density and (f) quantile function of $T_n^*$ in~\eqref{eq:qntG}; for sample size $n = 10^3$ (dotted), $n = 10^4$ (dot-dash), $n = 5\times 10 ^4$ (dashed) and $n = 10^5$ (solid). For each sample size $n$, the distribution function, density, and quantile function are estimated over $10^5$ random repetitions.}  \label{fig:mssim}}
\end{figure}

\begin{figure}[t]
\centering
\begin{tabular}{ll}
(a) & (b) \\
\includegraphics[width=0.45\textwidth]{./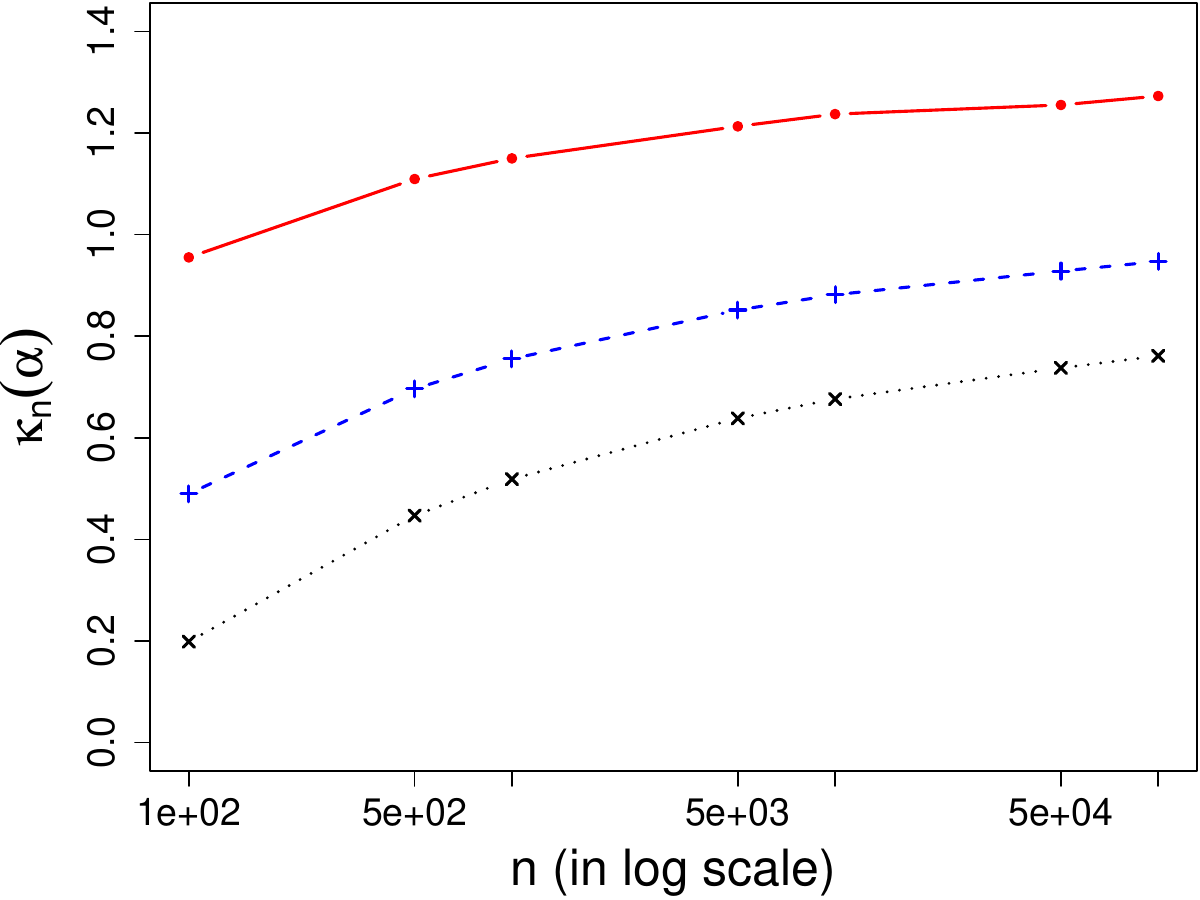} &
\includegraphics[width=0.45\textwidth]{./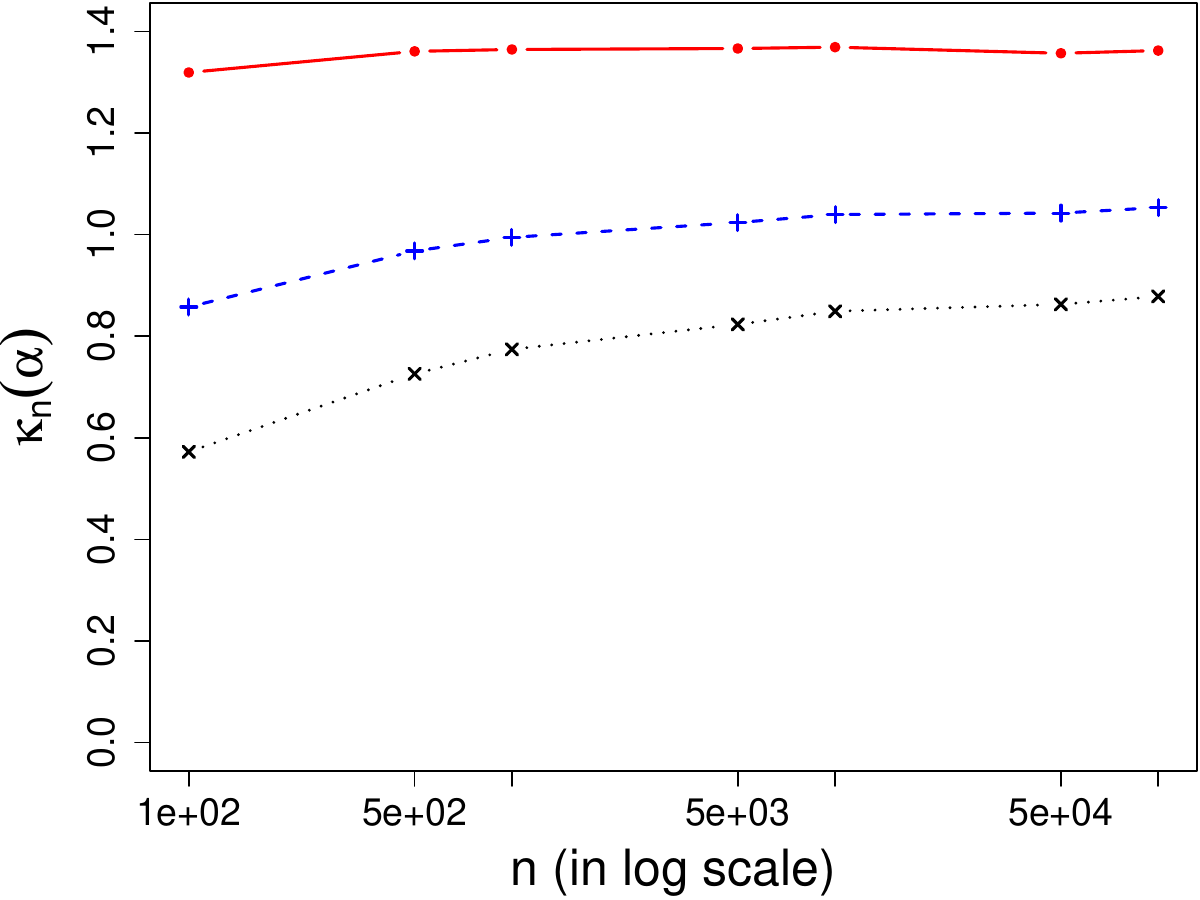} 
\end{tabular}
\caption{
{Quantiles: (a) $\kappa_n(\alpha)$ of $T_n$ in~\eqref{eq:qntC} (equivalently, $T_n$ in~\eqref{eq:msStat} under any continuous $F$); (b) $\kappa_n^*(\alpha)$ of $T_n^*$ in~\eqref{eq:qntG};  for $\alpha = 0.1$ (solid), $\alpha = 0.3$ (dashed) and $\alpha = 0.5$ (dotted).  For each sample size $n$, the quantiles are estimated over $10^5$ random repetitions.}\label{fig:qnt}}
\end{figure}

{Lemma~\ref{le:ks} ensures that all the theoretical results about the confidence set $C_n(\alpha)$ and the essential histogram remain valid if we use $\kappa_n^*(\alpha)$ in place of $\kappa_n(\alpha)$ as the threshold in~\eqref{eq:Cn}, cf.~\S\ref{proofs}. Therefore, in practice, we choose $\kappa_n^*(\alpha)$ as the threshold whenever there are tied observations; otherwise we treat $F$ as continuous, and use $\kappa_n(\alpha)$, determined by setting $F$ as uniform, as the threshold. This is also the default choice in our R-pakcage essHist. As mentioned, $\kappa_n(\alpha)$ and $\kappa_n^*(\alpha)$ are estimated via Monte-Carlo simulations, which are needed only once for a given sample size (which are automatically recorded for later usage in our R-package  ess\-Hist), while the computation time could be much longer than the pruned dynamic program in Algorithm~\ref{agDP} (see \S\ref{ss:pdp}) for large sample sizes, say $n \ge 10,000$. By simulation, we observe that the distributions of $T_n$ and $T_n^*$ seem to converge to a limit, and that such a convergence is fairly good when $n \ge 10,000$, see Figs.~\ref{fig:mssim} and~\ref{fig:qnt}. This is theoretically underpinned by the tightness of $T_n$ and $T_n^*$, see~\citet[Proposition 1]{RivWal13} and Lemma~\ref{le:ks}. For the sake of computational speed, we use the values of $\kappa_n(\alpha)$ and $\kappa_n^*(\alpha)$ with $n = 10,000$ as default values for sample sizes $n \ge 10,000$ in our R-package essHist. Through extensive simulation studies, we find that the performance of the essential histogram is hardly affected by such a default rule,  while the computation gets a significant speedup. }

\subsection{{Dynamic programming}}\label{ss:pdp}
Now we consider the computation of the (relaxed) essential histogram defined in~\eqref{eq:adj_ess_hist}. As argued in \S\ref{subAlg}, it can be computed by an accelerated dynamic programming algorithm. The idea of designing such an algorithm follows from \cite{FrMuSi14}, but the constraint that the solution should be a histogram introduces an additional difficulty. 

{
For notation simplicity, we consider the case that there is no ties in the data, otherwise it is sufficient to treat only distinct observations as the candidate locations of breakpoints, and the computation complexity will essentially depend on the number of distinct samples.} Since the data can be sorted in $O(n \log n)$ computation time, we simply assume $X_i = X_{(i)}$, for $i =1,\ldots, n$. For every interval $I$ in $\JJ$, defined in  \eqref{eqJJ}, the corresponding local constraint in $\tilde C_n(\alpha)$ in \eqref{eq:tldCn} leads to simply an interval, namely, 
\begin{equation}\label{eq:locI}
[l_I, u_I] =  \Bl\{\mu \mid \sqrt{2 \lLR \Bl(\mu\abs{I},F_n(I)\Br)} -
{\pen(F_n(I))} \le \kappa_n(\alpha)\Br\}. 
\end{equation}
Here $l_I$ and $u_I$ are roots of a smooth nonlinear equation, which can be computed in $O(1)$ time by standard algorithms, such as quasi-Newton methods~\citep[see e.g.][]{NoWr06}. For ease of exposition, we introduce the notation, for every $I \in \JJ$ and $\mu \in \R$ 
\[
T_{n,I}(\mu) = \sup_{J \subset I, \,J \in \JJ} \Bl(\sqrt{2 \lLR \Bl(\mu\abs{J},F_n(J)\Br)} -
{\pen(F_n(I))}\Br).
\]
We consider the multiscale constraint and entropy minimization (which is needed in case of multiple solutions) simultaneously, and thus define, for $j < i$
\[
c_{(j, i]} = 
\begin{cases}
-(i-j)\log\frac{i-j}{n(X_{(i)}-X_{(j)})} & \text{if } T_{n, (X_{(j)}, X_{(i)}]} (\frac{i-j}{n(X_{(i)}-X_{(j)})}) \le \kappa_n(\alpha)\\
\infty & \text{otherwise}
\end{cases}
\]
Let $K[i]$ be the number of blocks of the essential histogram $\hat h_i$ being applied to restricted data $X_{(1)},\ldots, X_{(i)}$, and $L[i]$ be the index of the leftmost sample in the last block of $\hat h_i$. Then it holds  
\begin{eqnarray}
K[0] &=& -1, \nonumber \\
K[i] &= &\min\left\{K[j] + 1 \mid c_{(j,i]} < \infty, j = 0, \ldots, i-1\right\}; \label{eq:bellK} \\
L[0] &= &1, \nonumber \\
L[i] &= &\mathop{\arg\min}_{j =0,\ldots, i-1}\left\{c_{(j,i]}  \mid K[j] + 1 = K[i]\right\}. \label{eq:bellL} 
\end{eqnarray}
The above relation is often referred to as \emph{Bellman equation}~\citep{Bel57}, which is the crucial part of a dynamic programing algorithm. Note that the histogram is completely determined by the segmentation (i.e.~breakpoints).  Thus, the essential histogram can be easily determined from vector $L = \{L[1], \ldots,L[n]\}$ in $O(K[n])$ (less than $O(n)$) computation time. Now we are ready to present the whole algorithm for computing the essential histogram in Algorithm~\ref{agDP}.

\begin{algorithm}[h]
  \KwData{$X_{(1)},\ldots, X_{(n)}$}
  \KwResult{the essential histogram}
  $k = 1$, $u = \infty$, $l = -\infty$, $\AA_0 = \{0\}$, $\BB = \{1,\ldots,n\}$, and $K, L$ vectors of length $n$\;
  \While{$i \in \BB$ and $\BB \neq \emptyset$}{
    $u = \min\bigl\{u, \min\{u_I \mid I \in \JJ, \max I = i\}\bigr\}$ with $u_I$ defined in \eqref{eq:locI}\;
    $l = \max\bigl\{v, \max\{l_I\mid I \in \JJ, \max I = i\}\bigr\}$  with $l_I$ defined in \eqref{eq:locI}\;
    Compute $K[i]$ and $L[i]$ by~\eqref{eq:bellK} and \eqref{eq:bellL} over $\AA_{k-1}$\;
    \If{$l > u$}{
    $\AA_k = \{i: K[i] = k\}$, and $\BB =  \BB \setminus  \AA_k$\;
    $u = \infty$, $l = -\infty$, and $k = k+1$\;
    }
  }
  Compute the histogram from $L$\;
  \caption{Pruned dynamic algorithm for the essential histogram}\label{agDP}
\end{algorithm}

Unlike standard dynamic programs, Algorithm~\ref{agDP} incorporates two important pruning steps: One is in line~5, where the search space is $\AA_{k-1}$ instead of $\{0, \ldots, i -1\}$; The other is in line~6, which prohibits further search towards right if no constant signal is admitted to the multiscale constraint, that is, no further right beyond $X_{(r_k)}$ with 
\begin{equation}\label{eq:rsLimit}
r_k = \Bl\{i \mid T_{n, (X_{(j)},X_{(i)}]}(\mu) \le \kappa(\alpha)\quad \text{for some } i \in \AA_{k-1} \text{ and } \mu \in \R\Br\}.
\end{equation}
Further pruning is possible, for instance, by introducing a similar stopping rule as $r_k$ in~\eqref{eq:rsLimit} on the reverse order of the data $\{X_{(n)}, \ldots, X_{(1)}\}$~\cite[see][]{PeSiMu15}.  We refer to our R-package essHist  
 for further technical details. 

The computation complexity of Algorithm~\ref{agDP} is bounded from above by
\[
O\Bl(\log n \sum_{k =1}^{K[n]}\#\AA_{k-1}(r_k - \min\AA_{k-1})\Br) \le O\Bl(n \log n \max_{k = 1, \ldots, K[n]}(r_k -  \min\AA_{k-1})\Br)
\]
In particular, if $K[n] = 1$, it implies that the computation complexity is $O(n \log n)$. In most cases, $\max_{k = 1, \ldots, K[n]}(r_k -  \min\AA_{k-1})$ stays bounded, which thus leads to a nearly linear computation complexity $O(n \log n)$. However, in very rare cases, $\max_{k = 1, \ldots, K[n]}(r_k -  \min\AA_{k-1})$ can be of order $n$, which gives the worst case complexity $O(n^2 \log n)$. Clearly, the memory complexity of Algorithm~\ref{agDP} is always linear, i.e.,~$O(n)$. 

Furthermore, we point out that Algorithm~\ref{agDP} applies to the multiscale constraint with arbitrary system of intervals besides $\JJ$.

\section{Additional simulation results}\label{sp:sim}
This section collects all quantitative comparison results for the essential histogram, the \DK estimator, and the classical histograms, being a companion of \S\ref{ss:compare}. 

While density estimation is not the primary purpose of the essential histogram, we now consider estimation errors measured by $L^2$-loss, and Kolmogorov loss. The former gives the mean integrated squared error (MISE), namely, 
\[
\Ex\left(\int \abs{f(x) - \hat{f}(x)}^2 dx\right)\qquad \text{ for true density } f \text{ and its estimator } \hat{f}; 
\]
The latter leads to 
\[
\Ex \sup_x \abs{\int_{-\infty}^x \bigl(f(t) - \hat f(t)\bigr)dt}\qquad \text{ for true density } f \text{ and its estimator } \hat{f}.
\]
In practice, the expectations are approximated by averages over independent repetitions. 

From data exploration and shape recovery perspective, we introduce evaluation measures via numbers of modes/troughs, and skewness. For a histogram density $h = \sum_{k = 0}^K c_k 1_{I_k}$ with $c_k \neq c_{k+1}$, we define the number of modes (i.e.~maxima) as $\#\bigl\{k:c_{k} > \max\{c_{k-1}, c_{k+1}\}\bigr\}$ and the number of troughs (i.e.~minima) as $\#\{k:c_{k} < \min\{c_{k-1}, c_{k+1}\}\}$. The number of extrema is then defined as the total number of modes and troughs. Note that the number of modes and troughs also capture the number of increases and decreases. From a slightly different viewpoint, the difference between the skewness of the estimator and that of the truth reflects how well the shape of the truth is recovered by the estimator.  

Table~\ref{tab:uniform} is for the uniform density example; 
Table~\ref{tab:monotone} for the monotone density example; 
Tables~\ref{tab:mix_unif_mode} and~\ref{tab:mix_unif_bin} for the histogram density example; 
Tables~\ref{tab:claw},~\ref{tab:claw_nbin} and~\ref{tab:claw_skewness} for the claw density example;
Tables~\ref{tab:harp_mode},~\ref{tab:harp_mise},~\ref{tab:harp_kol} and~\ref{tab:harp_skew} for the harp density example; 
and Tables~\ref{tab:cauchy} and~\ref{tab:cauchy_nbin} for the heavy tail example, in \S\ref{ss:compare}.
Within each table, the best value along each column (i.e.~among different methods) is marked in bold. 

\begin{table}
\caption{Number of modes (i.e.~false positives) on the uniform density (in~Fig.~\ref{fig:uniform}) averaged over 500 repetitions}%
\begin{tabular}{llcccccc}
\toprule
\multicolumn{2}{c}{\multirow{2}{*}{Methods}} & \multicolumn{5}{c}{Number of observations} \\ 
                                                                        & & $n = 100$ & $n = 300$ & $n = 500$ & $n = 700$ & $n = 900$  \\ 
                                                                        \midrule
\multirow{6}{*}{Essential histogram}   & $\alpha = 0.1$ &  \textbf{0.000}    &   \textbf{0.002}  & \textbf{0.000} & \textbf{0.000} & \textbf{0.000} \\  
                                  & $\alpha = 0.2$ & 0.004   &    0.006    &   0.004   &    0.006    &   0.008     \\
				        & $\alpha = 0.3$ &  0.006   &    0.010    &   0.016    &   0.012    &   0.016  \\
					 & $\alpha = 0.5$  &  0.030   &    0.046    &   0.054   &    0.048    &   0.072    \\
					 & $\alpha = 0.7$  & 0.108    &   0.148    &   0.162    &   0.188    &   0.178     \\
					 & $\alpha = 0.9$  & 0.424    &   0.442   &    0.518    &   0.548    &   0.560    \\
\multicolumn{2}{l}{\DK}                                 & 0.332    &   0.052    &   0.244    &   0.062    &   0.038    \\
\multirow{2}{*}{Equal bin width} & Sturges' rule                                            &   5.126    &   5.220    &   5.194    &   5.148    &   5.262    \\
					& Scott's rule &  1.938    &   3.344    &   3.914    &   4.798  & 5.300\\
\multirow{1}{*}{Equal block area}   & Scott's rule  &  2.060   &    3.378    &   4.062    &   4.732    &   5.326 \\
\bottomrule
\end{tabular}
 \label{tab:uniform}
\end{table}

\begin{table}
\caption{Number of modes (i.e.~false positives) on the exponential density (in~Fig.~\ref{fig:exponential}) averaged over 500 repetitions}
\begin{tabular}{llcccccc}
\toprule
\multicolumn{2}{c}{\multirow{2}{*}{Methods}} & \multicolumn{5}{c}{Number of observations} \\ 
                                                                        & & $n = 100$ & $n = 300$ & $n = 500$ & $n = 700$ & $n = 900$  \\ 
                                                                        \midrule
\multirow{6}{*}{Essential histogram}   & $\alpha = 0.1$ &  \textbf{0.000}& \textbf{0.000} & \textbf{0.000} & \textbf{0.000} & \textbf{0.000}   \\  
                                  & $\alpha = 0.2$ &  0.006 &\textbf{0.000}& \textbf{0.000}& \textbf{0.000} &0.004    \\
				        & $\alpha = 0.3$ &  0.006 &0.002 &\textbf{0.000}& \textbf{0.000}& 0.006  \\
					 & $\alpha = 0.5$  & 0.014 &0.012 &0.006 &0.008 &0.012    \\
					 & $\alpha = 0.7$  & 0.034 &0.034 &0.028 &0.018 &0.028   \\
					 & $\alpha = 0.9$  & 0.100 &0.066 &0.074& 0.064 &0.076 \\
\multicolumn{2}{l}{\DK}                                 & 0.048 &\textbf{0.000} &0.022 &0.004 &0.010  \\
\multirow{2}{*}{Equal bin width} & Sturges' rule                &   0.650 &0.610 &0.534 &0.610 &0.566   \\
			& Scott's rule  &   0.328& 0.942 &1.406& 1.674 &2.074  \\
\multirow{1}{*}{Equal block area}   & Scott's rule  &  1.158  &2.472 &3.348 &4.044 &4.580 \\
\bottomrule
\end{tabular}
\label{tab:monotone}
\end{table}

\begin{table}
\caption{Frequency of detecting the correct number of extrema on the histogram density (in Fig.~\ref{fig:mix_unif}) in 500 repetitions}
\begin{tabular}{llcccccc}
\toprule
\multicolumn{2}{c}{\multirow{2}{*}{Methods}} & \multicolumn{5}{c}{Number of observations} \\ 
                                                                        & 
                                                                        & $n = 600$ & $n = 700$ & $n = 800$ & $n = 900$ & $n = 1000$ \\ 
                                                                        \midrule
\multirow{6}{*}{Essential histogram}   & $\alpha = 0.1$&\textbf{95.6\%}& \textbf{98.0\%}& \textbf{99.2\%}& \textbf{98.8\%}&  \textbf{98.4\%} \\  
                                 & $\alpha = 0.2$ & 95.2\%& 97.6\%&97.6\%&96.4\%&96.6\% \\
				& $\alpha = 0.3$ &94.4\%&95.2\%& 95.0\%&94.8\%& 93.2\% \\
			        & $\alpha = 0.5$ &89.4\%& 90.4\%&88.6 \%&89.0\%&  88.6\% \\
				& $\alpha = 0.7$ & 83.0\%& 80.8\%& 79.8\%&81.0\%&78.6\% \\
				& $\alpha = 0.9$ & 64.6\%& 64.8\%&68.6\%&67.2\%& 60.8\% \\
\multicolumn{2}{l}{\DK}  &  50.2\%&51.8\%&50.4\%&48.2\%&46.2\% \\
\multirow{2}{*}{Equal bin width} & Sturges' rule  &  33.6\%&36.6\%&31.6\%&29.0\%&34.0\% \\
&Scott's rule & 30.8\%&28.6\%&32.2\%& 29.8\%& 28.0\% \\
 \multirow{1}{*}{Equal block area}   & Scott's rule&  36.0\%&42.2\%&44.2\%&44.8\%& 43.6\%  \\
 \bottomrule
\end{tabular}
\label{tab:mix_unif_mode}
\end{table}

\begin{table}
\caption{Number of false bins (i.e., $\max\{\nbin (\hat{F}) - \nbin (F), 0\}$) on the histogram density (in Fig.~\ref{fig:mix_unif}) averaged over 500 repetitions}
\begin{tabular}{llcccccc}
\toprule
\multicolumn{2}{c}{\multirow{2}{*}{Methods}} & \multicolumn{5}{c}{Number of observations} \\ 
                                                                        & 
                                                                        & $n = 600$ & $n = 700$ & $n = 800$ & $n = 900$ & $n = 1000$ \\ 
                                                                        \midrule
\multirow{6}{*}{Essential histogram}   & $\alpha = 0.1$ &\textbf{0.02}& \textbf{0.03} &\textbf{0.02} &\textbf{0.02} & \textbf{0.04} \\  
                                  & $\alpha = 0.2$  &   0.06& 0.07 &0.06 &0.08 & 0.08\\
				        & $\alpha = 0.3$   &0.11& 0.10& 0.11 &0.13 & 0.14 \\
					 & $\alpha = 0.5$   &0.23 &0.23& 0.23 &0.29&  0.28 \\
					 & $\alpha = 0.7$   &0.40& 0.47 &0.45 &0.49  &0.53\\
					 & $\alpha = 0.9$  & 0.88 &0.90 &0.85& 0.99  &1.10\\
\multicolumn{2}{l}{\DK}                                 &7.23 &6.89& 7.58& 7.75 & 8.73\\
\multirow{2}{*}{Equal bin width} & Sturges' rule   &2.81 &2.79 &2.83 &2.80 & 2.82\\
& Scott's rule  &0.90 &0.83 &0.23 &0.23 & 0.62  \\
 \multirow{1}{*}{Equal block area}   & Scott's rule  & 1.00 &1.13 &2.00 &2.00  &2.15  \\
 \bottomrule
\end{tabular}
\label{tab:mix_unif_bin}
\end{table}

\begin{table}
\caption{Number of modes on the claw density (in Fig.~\ref{fig:claw}) averaged over 500 repetitions; The truth has 5 modes}
\begin{tabular}{llcccccc}
\toprule
\multicolumn{2}{c}{\multirow{2}{*}{Methods}} & \multicolumn{5}{c}{Number of observations} \\ 
                                                                        & & $n = 1000$ & $n = 1200$ & $n = 1500$ & $n = 2000$ & $n = 3000$ \\ 
                                                                        \midrule
\multirow{6}{*}{Essential histogram}   & $\alpha = 0.1$ &  1.58 & 1.85  &2.46 & 3.24  & 4.74 \\  
                                 & $\alpha = 0.2$ &  1.92&  2.35&  2.99  &3.74 &  4.9\\
				& $\alpha = 0.3$ &  2.19 & 2.68 & 3.34  &4.13 &  4.96 \\
				& $\alpha = 0.5$  &  2.65 & 3.19 & 3.91&  4.6 &  4.99 \\
				& $\alpha = 0.7$  & 3.16&  3.72 & 4.38 & 4.82 &  \textbf{5.00} \\
				& $\alpha = 0.9$  & 3.84 & 4.39  &4.75 & 4.96  &   \textbf{5.00} \\
\multicolumn{2}{l}{\DK}                                 & \textbf{4.99}&  \textbf{5.00} & \textbf{5.00}&  5.01 & 5.01  \\
\multirow{2}{*}{Equal bin width} & Sturges' rule  &   1.23  &1.24  &1.19 & 1.11  & 1.08 \\
& Scott's rule  &  4.39  &4.86  &5.58  &6.23  & 6.84 \\
\multirow{1}{*}{Equal block area}   & Scott's rule  &  5.07  &5.08 & 5.15 & 5.24   & 5.50 \\
\bottomrule
\end{tabular}
\label{tab:claw}
\end{table}

\begin{table}
\caption{Average number of bins, with standard deviation given in the parenthesis, on the claw density (in Fig.~\ref{fig:claw}) over 500 repetitions}
\begin{tabular}{llccccc}
\toprule
\multicolumn{2}{c}{\multirow{2}{*}{Methods}} & \multicolumn{5}{c}{Number of observations} \\ 
                                                                        & & {$n = 1000$} & {$n = 1200$} & {$n = 1500$} & {$n = 2000$} & {$n = 3000$} \\ 
                                                                        \midrule
\multirow{6}{*}{Essential histogram}   & $\alpha = 0.1$ &  \textbf{7.1} (1.1)   &   \textbf{8.4} (1.1)  &   \textbf{9.8} (1.1)  & \textbf{11.2} (0.9)    & \textbf{13.3} (0.8) \\  
 & $\alpha = 0.2$ &  8.1(1.2)   &  9.3 (1.1)  & {10.6 (1.0) } & {11.9 (\textbf{0.8}) }     & {13.9 (0.8)} \\
& $\alpha = 0.3$ &  8.7 (1.2)   &   9.9 (1.1)  & 11.2 (1.0)  & 12.4 (\textbf{0.8})  & 14.2 (\textbf{0.7}) \\
& $\alpha = 0.5$  &  9.8 (1.2)   & 10.8 (1.1)  & 12.0 (0.9)  & 13.1 (\textbf{0.8})  &  14.7 (\textbf{0.7}) \\
& $\alpha = 0.7$  & 10.7 (1.1)  & 11.7 (1.0)  & 12.7 (0.9)  & 13.7 (\textbf{0.8})  &  15.1 (\textbf{0.7}) \\
& $\alpha = 0.9$  & 11.9 (1.0)  & 12.8 (1.0)  & 13.6 (0.9)  & 14.6 (0.9)  &  15.8 (0.9) \\
\multicolumn{2}{l}{\DK}   & 48.6 (4.9)    &  52.5 (5.6)  & {57.8 (5.9)}  & {67.7 (5.9)}  &  {79.1 (6.6)}  \\
\multirow{2}{*}{Equal bin width} & Sturges' rule &  12.5 (\textbf{0.8}) &   12.8 (\textbf{0.8}) &  13.1 (\textbf{0.8}) & 13.5 (0.9) &    14.0 (0.8) \\
& Scott's rule  &19.3 (1.2) &  20.8 (1.2) & 22.9 (1.3) &   25.6 (1.3) &  30.1 (1.4) \\
 \multirow{1}{*}{Equal block area}   & Scott's rule  & 20.5  (1.7) & 22.1 (1.7) & 24.3 (1.8) &  27.4 (2.0) &  32.4 (2.3)  \\
 \bottomrule
\end{tabular}
\label{tab:claw_nbin}
\end{table}

\begin{table}
\caption{Skewness of the histograms on the claw density (in Fig.~\ref{fig:claw}) averaged over 500 repetitions; The skewness of the truth is 0}
\begin{tabular}{llcccccc}
\toprule
\multicolumn{2}{c}{\multirow{2}{*}{Methods}} & \multicolumn{5}{c}{Number of observations} \\ 
                                                                        & & $n = 1000$ & $n = 1200$ & $n = 1500$ & $n = 2000$ & $n = 3000$ \\ 
                                                                        \midrule
  \multirow{6}{*}{Essential histogram}   & $\alpha = 0.1$ &  \textbf{0.010}& 0.002 &0.010& 0.007& \textbf{0.009} \\  
                                 & $\alpha = 0.2$ & 0.012& 0.007 &0.009& 0.006 &0.010\\
				& $\alpha = 0.3$ & 0.016& 0.005& 0.007& \textbf{0.005}& 0.010 \\
				& $\alpha = 0.5$  &  0.018 &0.002& 0.006& 0.006 & \textbf{0.009} \\
				& $\alpha = 0.7$  & 0.017& 0.002 &0.008& 0.007& 0.010 \\
				& $\alpha = 0.9$  & 0.019& 0.005& 0.007& \textbf{0.005} & 0.010 \\
\multicolumn{2}{l}{\DK}  & 0.057 &0.043 &0.039 &0.038& 0.035  \\
\multirow{2}{*}{Equal bin width} & Sturges' rule  &   0.076 &0.056 &0.060& 0.053 & 0.050 \\
& Scott's rule  & 0.063 &0.049& 0.049& 0.039 &0.040 \\
 \multirow{1}{*}{Equal block area}   & Scott's rule  &  0.032 &\textbf{0.001} &\textbf{0.004}& 0.008 & 0.014 \\
 \bottomrule
\end{tabular}
 \label{tab:claw_skewness}
\end{table}

\begin{table}
\caption{Frequency of detecting the correct number of extrema on the harp density (in Fig.~\ref{fig:harp}) in 500 repetitions }
\begin{tabular}{llccccc}
\toprule
\multicolumn{2}{c}{\multirow{2}{*}{Methods}} & \multicolumn{5}{c}{Number of observations} \\ 
& & {$n = 600$} & {$n = 800$} & {$n = 1000$} & {$n = 1200$} & {$n = 1500$}  \\ 
\midrule
\multirow{6}{*}{Essential histogram}   & $\alpha = 0.1$ &14.6\%& 64.4\%&  93.4\%&  98.8\%& \textbf{100\%}   \\  
&  $\alpha = 0.2$ & 35.6\%& 80.0\%&  96.6\%&  99.4\%&   {\textbf{100\%}}   \\
& $\alpha = 0.3$ & 49.6\%& 88.6\%&  97.4\%&  {\textbf{99.8\%}}&  {\textbf{100\%}}   \\
& $\alpha = 0.5$  & 69.6\%& 95.2\%&  97.8\%&  \textbf{99.8\%}& \textbf{100\%}       \\
& $\alpha = 0.7$  & 82.6\%& 97.4\%&  99.4\%&  \textbf{99.8\%}& \textbf{100\%}  \\
& $\alpha = 0.9$  & 93.0\%& 98.4\%&  99.6\%&  \textbf{99.8\%}& \textbf{100\%}  \\
\multicolumn{2}{l}{\DK} & \textbf{99.8\%}& \textbf{99.6\%}&  \textbf{99.8\%}&  \textbf{99.8\%}& \textbf{100\%}  \\
                   \multirow{2}{*}{Equal bin width} & Sturges' rule   &   0.6\%&  0.0\%&   0.0\%&   2.0\%&   1.4\%       \\
& Scott's rule  &  0.0\%&  0.0\%&   0.0\%&   0.0\%&   0.0\% \\
 \multirow{1}{*}{Equal block area}   & Scott's rule  &  1.0\%& 24.2\%&  81.0\%&  98.8\%& \textbf{100\%} \\
 \bottomrule
\end{tabular}
\label{tab:harp_mode}
\end{table}

\begin{table}
\caption{MISE $\times 10 ^5$ on the harp density (in Fig.~\ref{fig:harp}) averaged over 500 repetitions }
\begin{tabular}{llSSSSS}
\toprule
\multicolumn{2}{c}{\multirow{2}{*}{Methods}} & \multicolumn{5}{c}{Number of observations} \\ 
&   & {$n = 600$} & {$n = 800$} & {$n = 1000$} & {$n = 1200$} & {$n = 1500$}  \\ 
\midrule
\multirow{6}{*}{Essential histogram}   & $\alpha = 0.1$  &3.9 & 2.8  &2.4  &2.2 & 1.9 \\  
&  $\alpha = 0.2$ &3.4 & 2.6&  2.3 & 2.0  &1.8  \\
& $\alpha = 0.3$ & 3.2  &2.5 & 2.2  &1.9 & 1.7 \\
& $\alpha = 0.5$  &2.9  &2.4 & 2.1 & 1.8 & 1.6        \\
& $\alpha = 0.7$  &2.7  &2.3 & 2.0 & 1.7 & 1.4 \\
& $\alpha = 0.9$  & 2.6  &2.2  &1.9&  1.6&  1.3 \\
\multicolumn{2}{l}{\DK} & {\textbf{1.5}}& {\textbf{0.8}}& {\textbf{0.6}}& {\textbf{0.4}}& {\textbf{0.3}}  \\
                   \multirow{2}{*}{Equal bin width} & Sturges' rule    &61.9 &63.1 &63.7 &55.8 &57.3      \\
& Scott's rule  & 35.0 & 31.7 & 30.1 &31.0 & 36.0  \\
 \multirow{1}{*}{Equal block area}   & Scott's rule  &  20.4& 15.6&  9.8 & 8.5 & 7.7 \\
 \bottomrule
\end{tabular}
\label{tab:harp_mise}
\end{table}

\begin{table}
\caption{Error measured by~Kolmogorov metric on the harp density (in Fig.~\ref{fig:harp}) averaged over 500 repetitions }
\begin{tabular}{llcccccc}
\toprule
\multicolumn{2}{c}{\multirow{2}{*}{Methods}} & \multicolumn{5}{c}{Number of observations} \\ 
& & $n = 600$ & $n = 800$ & $n = 1000$ & $n = 1200$ & $n = 1500$  \\ 
\midrule
\multirow{6}{*}{Essential histogram}   & $\alpha = 0.1$  &0.05 &\textbf{0.04}&  \textbf{0.03} & \textbf{0.03} & \textbf{0.03}  \\  
&  $\alpha = 0.2$ & 0.05 &\textbf{0.04}  &\textbf{0.03}  &\textbf{0.03}  &\textbf{0.03}  \\
& $\alpha = 0.3$  &0.05 &\textbf{0.04}  &\textbf{0.03}  &\textbf{0.03}  &\textbf{0.03}    \\
& $\alpha = 0.5$  & \textbf{0.04}& \textbf{0.04}  &\textbf{0.03} & \textbf{0.03} & \textbf{0.03}      \\
& $\alpha = 0.7$  & \textbf{0.04}& \textbf{0.04}  &\textbf{0.03} & \textbf{0.03} & \textbf{0.03}  \\
& $\alpha = 0.9$  & \textbf{0.04} &\textbf{0.04}  &\textbf{0.03} & \textbf{0.03} & \textbf{0.03}  \\
\multicolumn{2}{l}{\DK} &  0.05& 0.05 & 0.05 & 0.04 & 0.04  \\
                   \multirow{2}{*}{Equal bin width} & Sturges' rule    &0.18 &0.18 & 0.18&  0.15  &0.16    \\
& Scott's rule  & 0.09& 0.09 & 0.08 & 0.08&  0.08  \\
 \multirow{1}{*}{Equal block area}   & Scott's rule   &0.06 &0.06 & 0.05  &0.05  &0.04 \\
 \bottomrule
\end{tabular}
\label{tab:harp_kol}
\end{table}

\begin{table}
\caption{Skewness of the histograms on the claw density (in Fig.~\ref{fig:harp}) averaged over 500 repetitions; The skewness of the truth is 0.9 }
\begin{tabular}{llcccccc}
\toprule
\multicolumn{2}{c}{\multirow{2}{*}{Methods}} & \multicolumn{5}{c}{Number of observations} \\ 
& &  $n = 600$ & $n = 800$ & $n = 1000$ & $n = 1200$ & $n = 1500$  \\ 
\midrule
\multirow{6}{*}{Essential histogram}   & $\alpha = 0.1$& 0.94 &0.92 & 0.91  &0.91  &0.91 \\  
&  $\alpha = 0.2$ &0.93 &\textbf{0.91} & 0.91 & 0.91 & 0.91 \\
& $\alpha = 0.3$ & 0.92 &\textbf{0.91}  &0.91  &0.91  &0.91  \\
& $\alpha = 0.5$  &0.91& \textbf{0.91}&  0.91&  0.91 & \textbf{0.90}   \\
& $\alpha = 0.7$  & 0.91& \textbf{0.91}  &0.91  &\textbf{0.90}  &\textbf{0.90}  \\
& $\alpha = 0.9$   & \textbf{0.90} & \textbf{0.91} & \textbf{0.90}  &\textbf{0.90} & \textbf{0.90} \\
\multicolumn{2}{l}{\DK} & 0.60 &0.65  &0.69  &0.72 & 0.75 \\
                   \multirow{2}{*}{Equal bin width} & Sturges' rule & 1.08 &1.10 & 1.12 & 1.09  &1.10   \\
& Scott's rule & 0.83 &0.85 & 0.86 & 0.87 & \textbf{0.90} \\
 \multirow{1}{*}{Equal block area} & Scott's rule & 0.98& 0.94 & 0.93  &0.96  &0.98 \\
 \bottomrule
\end{tabular}
\label{tab:harp_skew}
\end{table}

\begin{table}
\caption{Frequency of detecting the correct number of extrema on the Cauchy density (in Fig.~\ref{fig:cauchy}) in 500 repetitions }
\begin{tabular}{llcccccc}
\toprule
\multicolumn{2}{c}{\multirow{2}{*}{Methods}} & \multicolumn{5}{c}{Number of observations} \\ 
                                                                        & & $n = 100$  & $n = 200$ & $n = 300$ & $n = 400$ & $n = 500$  \\ 
                                                                        \midrule
\multirow{6}{*}{Essential histogram}   & $\alpha = 0.1$ & \textbf{100\%}     &  \textbf{100\%}      & \textbf{100\%}     & \textbf{100\%}      & \textbf{100\%}   \\  
                                  & $\alpha = 0.2$ & \textbf{100\%}  &  \textbf{100\%}  & \textbf{100\%}  & \textbf{100\%}   & \textbf{100\%}     \\
				        & $\alpha = 0.3$ & \textbf{100\%} &  \textbf{100\%}  & \textbf{100\%}    & \textbf{100\%}  & \textbf{100\%}        \\
					 & $\alpha = 0.5$  & \textbf{100\%}    &  \textbf{100\%}     & \textbf{100\%}      & 99.8\%  & \textbf{100\%}      \\
					 & $\alpha = 0.7$  & \textbf{100\%}   & \textbf{100\%}      & 99.8\%     & 99.8\%  & \textbf{100\%}    \\
					 & $\alpha = 0.9$  & 99.6\% & 99.6\% & 99.0\%& 99.4\%& 99.4\%      \\
\multicolumn{2}{l}{\DK}                                &  \textbf{100\%} & \textbf{100\%}  & \textbf{100\%}  & \textbf{100\%} & \textbf{100\%}  \\
                   \multirow{2}{*}{Equal bin width} & Sturges' rule   &  76.6\% &   72.8\%  &  76.0\%  & 74.0\% & 74.0\%       \\
					& Scott's rule  & 35.6\% &   25.6\%  &  21.0\%  & 15.2\%  &  15.4\%   \\
\multirow{1}{*}{Equal block area}   & Scott's rule  & 5.4\%  &  0.0\%  &  0.0\% &  0.0\% &  0.0\%  \\
\bottomrule
\end{tabular}
\label{tab:cauchy}
\end{table}

\begin{table}
\caption{Average number of bins, with standard deviation given in the parenthesis, on the Cauchy density (in Fig.~\ref{fig:cauchy}) over 500 repetitions}
\begin{tabular}{llcccccc}
\toprule
\multicolumn{2}{c}{\multirow{2}{*}{Methods}} & \multicolumn{5}{c}{Number of observations} \\ 
                                                                        & & $n = 100$ & $n = 200$ & $n = 300$ & $n = 400$ & $n = 500$  \\ 
                                                                        \midrule
\multirow{6}{*}{Essential histogram}   & $\alpha = 0.1$ &  \textbf{4.3} (0.58) & 5.3 (\textbf{0.53}) & 6.1 (0.59) &  6.9 (\textbf{0.47}) &  7.3 (\textbf{0.51})  \\  
                                  & $\alpha = 0.2$ &  4.5 (0.56) & 5.6 (0.61) & 6.4  (0.57) &  7.2  (0.52) &  7.7 (0.59)  \\
				        & $\alpha = 0.3$ &  4.7 (\textbf{0.53}) &   5.8 (0.61) &   6.6 (0.56) &   7.4 (0.58) &   7.9 (0.63)      \\
					 & $\alpha = 0.5$  &  4.9 (0.54) &   6.1 (0.63) &   6.9 (\textbf{0.51}) &   7.8 (0.68) &   8.4 (0.66)   \\
					 & $\alpha = 0.7$  &  5.2 (0.54) &   6.5 (0.66) &   7.2 (0.58) &   8.3 (0.71) &   8.8 (0.68)  \\
					 & $\alpha = 0.9$  &  5.6 (0.66) &   7.1 (0.74) &   7.7 (0.70) &   9.0 (0.79) &   9.4 (0.80)    \\
\multicolumn{2}{l}{\DK} & 14.7 (2.59) &  22.8 (3.27) &  28.2 (3.60) &  33.3 (3.85) &  37.5  (4.14)   \\
\multirow{2}{*}{Equal bin width} & Sturges' rule &  4.4 (1.26) &   \textbf{4.6} (1.47) &   \textbf{4.7} (1.50) &   \textbf{4.7} (1.48) &   \textbf{4.7} (1.48) \\
& Scott's rule  & 6.0 (2.13) &   7.7 (2.91) &   8.9 (3.60) &  10.0 (4.06) &  10.8 (4.30) \\
 \multirow{1}{*}{Equal block area}   & Scott's rule  & 14.2 (1.79) &  24.7 (3.44) &  34.6 (4.75) &  44.1 (6.22) &  53.0 (7.35) \\
 \bottomrule 
\end{tabular}
\label{tab:cauchy_nbin}
\end{table}

\end{appendices}

\end{document}